\documentclass[11pt, reqno]{amsart}
\usepackage{fullpage}
\usepackage{amsmath,amsthm,amssymb,enumerate}
\usepackage[all]{xy}
\usepackage{array}
\usepackage{float}
\usepackage{graphicx}
\usepackage{tikz}

\usepackage[dvipsnames]{xcolor}
\definecolor{citation}{rgb}{0,.40,.80}
\usepackage[colorlinks,linkcolor=citation,citecolor=citation,urlcolor=citation]{hyperref}
\usepackage{cleveref}

\crefname{table}{Table}{Tables}
\Crefname{table}{Table}{Tables}

\newcommand \C {\mathbb C}

\newcommand \F {\mathbb F}
\newcommand \fF {\mathcal F}
\newcommand \gG {\mathcal G}

\newcommand \I {\mathcal I}

\renewcommand \L {\mathcal L}

\newcommand \fm {\mathfrak m}
\renewcommand \O {\mathcal O}
\newcommand \N {\mathbb N}

\renewcommand \P {\mathbb P}

\newcommand \Z {\mathbb Z}

\newcommand\xra{\xrightarrow}

\newcommand\dA{\hat{A}}

\DeclareMathOperator \alb {alb}

\DeclareMathOperator \sEnd {\mathcal{E}nd}

\DeclareMathOperator \sExt {\mathcal{E}xt}
\DeclareMathOperator \id {id}

\DeclareMathOperator \sHom {\mathcal{H}om}
\DeclareMathOperator \Hilb {Hilb}
\DeclareMathOperator \Supp {Supp}

\DeclareMathOperator \length {length}

\DeclareMathOperator \mult {mult}
\DeclareMathOperator \Pic {Pic}
\DeclareMathOperator \NS {NS}
 
\DeclareMathOperator \Spec {Spec}

\DeclareMathOperator{\gr}{gr}

\newtheorem {thm} {Theorem}[section]
\newtheorem {cor} [thm] {Corollary}
\newtheorem {lemma} [thm] {Lemma}
\newtheorem {prop} [thm] {Proposition}

\theoremstyle{definition}
\newtheorem {defn} [thm] {Definition}
\newtheorem{question} {Question}
\newtheorem {example}[thm] {Example}
\newtheorem {rmk}[thm] {Remark}
\crefalias{lemma}{lemma}     
\crefalias{prop}{proposition} 
\crefalias{rmk}{remark}    
\crefalias{cor}{corollary}    
\crefname{rmk}{Remark}{Remarks}
\Crefname{rmk}{Remark}{Remarks}

\numberwithin{equation}{section}
\numberwithin{table}{section}

\makeatletter
\newcommand{\xleftrightarrow}[2][]{\ext@arrow 3359\leftrightarrowfill@{#1}{#2}}
\newcommand{\dashto}[2][]{\ext@arrow 0359\rightarrowfill@@{#1}{#2}}
\newcommand{\xdashleftarrow}[2][]{\ext@arrow 3095\leftarrowfill@@{#1}{#2}}
\newcommand{\xdashleftrightarrow}[2][]{\ext@arrow 3359\leftrightarrowfill@@{#1}{#2}}
\def\rightarrowfill@@{\arrowfill@@\relax\relbar\rightarrow}
\def\leftarrowfill@@{\arrowfill@@\leftarrow\relbar\relax}
\def\leftrightarrowfill@@{\arrowfill@@\leftarrow\relbar\rightarrow}
\def\arrowfill@@#1#2#3#4{%
  $\m@th\thickmuskip0mu\medmuskip\thickmuskip\thinmuskip\thickmuskip
   \relax#4#1
   \xleaders\hbox{$#4#2$}\hfill
   #3$%
}
\makeatother

\newcommand{\obot}{\mathbin{\vcenter{\hbox{
  \begin{tikzpicture}[x=0.84ex,y=0.84ex,line width=0.06ex,baseline=-0.7ex]
    \draw (0,0) circle (0.9);
    \draw (-0.8,-0.4) -- (0.8,-0.4);
    \draw (0,-0.4) -- (0,0.9);
  \end{tikzpicture}
}}}}

\newcommand{\qformtable}[8]{%
    \makebox[0pt][l]{\rule[-1em]{0pt}{2.5em}}
    \vspace{0em}
    \begin{tikzpicture}[baseline=(current bounding box.center), scale=0.4]
        \fill[#1] (0,1) circle (0.4); 
        \fill[#2] (1,1) circle (0.4); 
        \fill[#3] (0,0) circle (0.4); 
        \fill[#4] (1,0) circle (0.4); 
        \node at (2.2,0.5) {$\obot$};
        \fill[#5] (3.4,1) circle (0.4); 
        \fill[#6] (4.4,1) circle (0.4); 
        \fill[#7] (3.4,0) circle (0.4); 
        \fill[#8] (4.4,0) circle (0.4); 
    \end{tikzpicture}%
}
                  
\begin{document}

\author{Katrina Honigs}
\author{Graham McDonald}
\author{Peter M. McDonald}
\address{Department of Mathematics\\
Simon Fraser University\\
8888 University Drive\\
Burnaby, BC, V5A 1S6\\
Canada}

\title{An approach to curves in abelian surfaces using Fourier--Mukai and quadratic forms}

\date{}

\begin{abstract} 
It was proven by Yoshioka \cite[Prop.~3.5]{Yoshioka} that 
given a complex abelian surface $A$ 
of Picard rank $1$
whose primitive polarization is  
non-principal of type 
$(1,d)$, there is an isomorphism $\Psi:\Hilb^d_A\times\hat{A}\to M_{\hat{A}}(0,\hat{l},-1)$ where $\Hilb^d_A$ is the Hilbert scheme of lenth-$d$ subschemes of $A$ and 
$M_{\hat{A}}(0,\hat{l},-1)$ is
a moduli space of Gieseker-stable sheaves on the dual abelian surface. 
Specifically, $M_{\hat{A}}(0,\hat{l},-1)$ parametrizes
rank $1$ torsion-free sheaves with Euler characteristic $-1$ that are supported on a curve 
whose 
N\'eron--Severi class is dual to that of the polarization on $A$. 
The Fourier--Mukai transform is a crucial component of $\Psi$.

In this paper, we use the isomorphism $\Psi$ 
to deduce information 
about curves on abelian surfaces. 
In the case that $Z\in \Hilb^d_A$ is symmetric, i.e., fixed by the inverse map $\iota$ on $A$, 
we use quadratic forms to compute information about the sheaf $\Psi(Z)$ and its supporting curve. It was recently shown by Knutsen and Lelli-Chiesa \cite{KLC} that 
any singularity on a curve of geometric genus $2$ contained in a general 
$(d_1,d_2)$-polarized abelian surface must have multiplicity at most $6$, among other constraints.
In contrast, we demonstrate there are curves with singularities of arbitrarily high multiplicity contained in 
general $(1,d)$-polarized abelian surfaces for sufficiently large $d$.
Furthermore, 
we identify the isolated fixed points of $\iota$ in 
acting on the variety of Kummer type 
$K_{\hat{A}}(0,\hat{l},-1)$ when $d=4$. Along the way, we prove 
some structural results on symmetric line bundles, showing 
that if $d$ is even, the dual of an odd line bundle is odd. 
\end{abstract}

\maketitle

Recently, significant advances have been made in characterizing the specific types of curves that arise within linear systems of general 
$(d_1,d_2)$-polarized abelian surfaces.
For example, the number of hyperelliptic curves that such a linear system contains is finite \cite{Pirola} and 
in \cite{Bryan}, the authors find their virtual counts for each genus.
When the polarizing line bundle is primitive and 
the abelian surface satisifies a condition that is expected to hold generically,
the counts of \cite{Bryan} are exact.

The smooth hyperelliptic curves in general primitively polarized abelian surfaces have now all been explicitly constructed. They have genus $g=2$, $3$, $4$, or $5$ and are contained in 
$(1,g-1)$-polarized abelian surfaces, respectively \cite[Theorem~2.8]{BorowkaOrtega}. The constructions of the genus $2$ and $3$ cases are classically known, and the genus $4$ and $5$ cases were completed more recently in \cite{BSgenus4curves} and \cite{BorowkaOrtega}.

Curves of genus $2$ in abelian surfaces have several special qualities.
The total number of genus $2$ curves can be counted relatively simply using kernels of isogenies \cite{Debarre}; this count has been used to calculate the Euler characteristic of generalized Kummer varieties. 
These curves are known to have ordinary singularities \cite{LangeSernesi}, and recently it was proven 
that such curves have only nodal singularities unless $4|d_2$, in which case the singularities may be ordinary triple, $4$-tuple, or $6$-tuple points, all of which occur \cite{KLC}. However, relatively little is known about curves of higher genus, both in terms of explicit constructions and constraints on what sort of singularities may occur.

In this paper, we take a novel approach toward analyzing such curves,
under the constraint that the abelian surface $A$ has Picard rank $1$ and 
the N\'eron--Severi class of the curve is a primitive, non-principal polarization ($d_1=1$, $d_2\geq 2$).
Under these assumptions, there is an isomorphism of moduli spaces that we call $\Psi$, the essential part of which is a specific case of 
\cite[Prop.~3.5]{Yoshioka}, that identifies the Hilbert scheme of $d$-points on $A$ with, roughly speaking, subschemes of compactified relative Jacobians of 
the dual linear systems 
on the dual abelian surface $\hat{A}$.
This isomorphism is well-established; the restriction 
of $\Psi$
to the generalized Kummer variety $K_{d-1}A$ 
endows it with a Lagrangian fibration, which comes from mapping sheaves in the relative Jacobian to their supporting curves in $|\hat{L}|$ (see \cite{Gulbrandsen}).

Given a $d$-point $Z\in\Hilb^d_A$, 
it is not clear how to determine concrete geometric properties 
of its image under $\Psi$ in general, but the following
structural result in this direction suggests that $\Psi$ nevertheless 
provides a natural framework in which to study the supporting curves of the sheaves in its codomain. 
Translation-invariance of $Z$, a straightforward property on the Hilbert scheme side of $\Psi$, has deep geometric consequences for the supporting curve $C_Z$ of $\Psi(Z)$, including determining its irreducible component in the Severi variety. 

\begin{thm}\label{structure_thm_intro}
Let $(A,L)$ be a general $(1,d)$-polarized abelian surface with $d\geq 2$. Let
$Z$ be a $d$-point on~$A$. 
\begin{enumerate}[(a)]
\item If $Z$ is invariant under translation by $x\in A\setminus K(L)$, $t_{\phi_L(x)}$ is a nontrival endomorphism on~$C_Z$.
\end{enumerate}
Let $G_Z\leq K(L)$ be the subgroup of all points in $K(L)$ under which $Z$ is translation invariant.
\begin{enumerate}[(a)]\setcounter{enumi}{1}
\item The geometric genus of $C_Z$ is at most $\frac{d}{|G_Z|}+1$, and is $2$ if and only if $|G_Z|=d$. 
\item When the geometric genus of $C_Z$ is greater than or equal to $3$, 
 $G_Z$ determines its component of the Severi variety (cf.~Zahariuc \cite{Zahariuc}).
\end{enumerate}  
\end{thm}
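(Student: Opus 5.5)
The plan is to make the functor underlying $\Psi$ explicit and track how translations transform under it. Recall that $\Psi(Z,\alpha)$ is, up to a twist by $\alpha\in\hat A$ and a shift, the Fourier--Mukai transform of $I_Z\otimes L$ (or its dual). The Mukai relations are
\[
\Phi(t_x^*F)\cong \Phi(F)\otimes P_{-x}, \qquad \Phi(F\otimes P_\xi)\cong t_\xi^*\Phi(F),
\]
together with $t_x^*L\cong L\otimes P_{\phi_L(x)}$.

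\textbf{Step 1 (equivariance).} If $t_x(Z)=Z$, then
\[
t_x^*(I_Z\otimes L)\cong I_Z\otimes L\otimes P_{\phi_L(x)}.
\]
Applying $\Phi$ gives
\[
\Phi(I_Z\otimes L)\otimes P_{-x}\cong t_{\phi_L(x)}^*\Phi(I_Z\otimes L).
\]
Taking supports, the left side is supported on $C_Z$ and the right side on $t_{\phi_L(x)}^{-1}(C_Z)$. Hence $C_Z$ is invariant under $t_{\phi_L(x)}$, and this translation restricts to an automorphism of $C_Z$. For $x\notin K(L)=\ker\phi_L$ we have $\phi_L(x)\neq 0$, so the translation is nontrivial. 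This proves (a).

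\textbf{Step 2 (genus bound).} The subgroup $G_Z\leq K(L)$ consists of points where $\phi_L$ vanishes, so Step 1 gives nothing for them directly. Instead I use the dual polarization: under $\phi_{\hat L}\circ\phi_L=d$, the group $K(L)$ corresponds to $K(\hat L)$, and the translation $t_x$ for $x\in G_Z$ acts on the transform as tensoring by $P_{-x}$. The sheaf $\Psi(Z)$ is then fixed by tensoring with $P_{-x}|_{C_Z}$. Thus the line bundles $P_{-x}|_{\tilde C_Z}$ on the normalization (at least on the locus where the sheaf is locally free) are trivialized by a rank-one sheaf that is fixed by them.

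I would rather pass to the quotient directly. Set $A'=A/G_Z$ and $\hat A'$ its dual. The condition $\Psi(Z)\otimes P_{-x}\cong\Psi(Z)$ for all $x\in G_Z$ says that $P_{-x}|_{C_Z}$ is trivial, or that $C_Z$ is a pullback under the isogeny $\hat A\to\hat A'$ dual to $A\to A'$. Concretely, $C_Z=\pi^{-1}(C')$ where $\pi$ has degree $|G_Z|$ and $C'\subset \hat A'$ lies in a class of self-intersection $2d/|G_Z|^2\cdot |G_Z| = 2d/|G_Z|$. The normalization of $C_Z$ is an étale $|G_Z|$-cover of that of $C'$ when irreducible. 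Alternatively, bound the genus of $C'$ by its arithmetic genus $d/|G_Z|+1$. The genus-2 statement then follows: the normalization maps onto $\hat A$ and generates it, so $g\geq 2$. For the converse (genus 2 forces $|G_Z|=d$), a genus-2 curve's normalization is $\tilde C\to J\tilde C\to\hat A$, an isogeny whose kernel recovers a subgroup of order $d$. I would invoke the known description of genus-2 curves via isogeny kernels \cite{Debarre}.

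\textbf{Step 3 (Severi components).} For (c), I would match $G_Z$ with the kernel of the isogeny $J(\tilde C)\to\hat A$, or rather its Albanese factor. Zahariuc \cite{Zahariuc} shows that for genus at least $3$ the Severi components are classified by exactly this isogeny datum. So (c) reduces to the identification from Step 2 that $C_Z$ is a pullback through precisely the isogeny determined by $G_Z$ and no larger one.

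The main obstacle is Step 2, specifically showing that invariance under $G_Z$ forces $C_Z$ to descend along exactly the quotient by $G_Z$. This requires a careful accounting of the $\hat A$-twist in $\Psi$ and of non-reduced or reducible supports, which generality of $(A,L)$ should exclude.
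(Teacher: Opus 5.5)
Your Step 1 is the paper's argument for (a): \Cref{lem.trten} gives $\Psi(Z)\simeq P_x\otimes t_{\phi_L(x)}^*\Psi(Z)$, and comparing supports shows that $t_{\phi_L(x)}$ preserves $C_Z$.

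\textbf{Step 2 has the geometry backwards, and as written it does not give the bound.}

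First, $\Psi(Z)\otimes P_{-x}\simeq\Psi(Z)$ cannot make $P_{-x}|_{C_Z}$ trivial. Take $0\neq x$ and any ample curve $C\subset\hat{A}$. The sequence $0\to P_x(-C)\to P_x\to P_x|_C\to 0$, together with $H^0(P_x)=0$ and Mumford's vanishing $H^1(P_x(-C))\simeq H^1(P_x^{-1}(C))^*=0$, gives $H^0(P_x|_C)=0$. Hence $P_x|_C\not\simeq\O_C$. What is true is weaker: Beauville's lemma (as used in \Cref{rmk.trten}) gives only that $a^*P_x$ is trivial on the normalization $a\colon\widetilde{C}_Z\to\hat{A}$.

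Second, the isogeny dual to $A\to A/G_Z$ is $\theta\colon (A/G_Z)^{\wedge}\to\hat{A}$, pointing \emph{into} $\hat{A}$. Since $a^*$ factors through $A\to A/G_Z$, dually $a$ factors through $\theta$. So $C_Z$ is the birational image $\theta(C'')$ of a curve $C''\subset (A/G_Z)^{\wedge}$, not a preimage $\pi^{-1}(C')$.

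The direction is exactly what makes the bound work. In your picture, $\widetilde{C}_Z\to\widetilde{C'}$ is \'etale of degree $|G_Z|$, so $g(\widetilde{C}_Z)-1=|G_Z|\,(g(\widetilde{C'})-1)$. Bounding $g(C')$ by $d/|G_Z|+1$ then yields only $g(\widetilde{C}_Z)\le d+1$, the trivial bound. In the correct picture, $\widetilde{C}_Z$ is the normalization of $C''$, and $|G_Z|\,(C'')^2=C''\cdot\theta^*C_Z=C_Z^2=2d$. Hence $g(\widetilde{C}_Z)\le p_a(C'')=d/|G_Z|+1$. This is the paper's argument, phrased there as $C''$ lying in a linear system of type $(1,d/|G_Z|)$.

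\textbf{The converse in (b) and part (c) need the factorization to be sharp.} That is, you need $\ker a^*=\{P_x\mid x\in G_Z\}$, and you correctly single this out as the crux. Beauville's lemma gives only $\supseteq$. The paper's proof asserts equality at this point without further justification.

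The reverse inclusion fails in general. Suppose $\Psi(Z)$ is locally free on $C_Z$. The vanishing above gives $P_x\otimes\Psi(Z)\not\simeq\Psi(Z)$ for every $0\neq x\in K(L)$, so $G_Z=0$ by \Cref{lem.trten}. Such $Z$ exist with $C_Z$ of geometric genus $2$. Indeed, by \Cref{albhilb} and \Cref{detPhi}, the Albanese condition cutting out $\Psi(\Hilb^d_A\times\{\O_A\})$ can be met by line bundles on any integral curve of $|\hat{L}|$, because $\Pic(C)\to\Pic(\widetilde{C})$ and $a_*$ are surjective.

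So no refinement of Step 2 will produce ``exactly $G_Z$'' unconditionally; you need an extra hypothesis. One that works is that $\Psi(Z)\simeq\nu_*\fF$ is a pushforward of a line bundle from $\widetilde{C}_Z$, as in all the examples of \Cref{sec.140ex}. Under it, $\ker a^*\subseteq G_Z$ follows:
\begin{itemize}
\item $a^*P_x\simeq\O$ forces $\phi_L(x)=0$, since $a_*\circ a^*=-\phi_L$.
\item The projection formula gives $P_x\otimes\nu_*\fF\simeq\nu_*\fF$.
\item \Cref{lem.trten} then gives $t_x(Z)=Z$.
\end{itemize}
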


The isomorphism $\Psi$ depends on the choice of a polarizing line bundle on the abelian surface. If we choose this line bundle 
to be symmetric, then $\Psi$ commutes with 
the inverse group law map $\iota:A\to A$, $a\mapsto -a$.
This compatibility 
allows us to use the eigenvalues of $\iota^*$ to analyze 
$\Psi(Z)$ when $Z$ is symmetric ($\iota(Z)=Z$).

\begin{thm}\label{eigen}
If $Z\in K_{d-1}A\subseteq\Hilb^d_A$ is symmetric, then $C_Z\in |\hat{L}|$. Furthermore:
\begin{enumerate}[(a)]
\item The eigensystem $|\hat{L}|^{\pm}$ containing $C_Z$ is
specified by the determinant of the action of $\iota$ on a vector space we associate to $Z$.
\item For each $2$-torsion point $x\in\hat{A}[2]$, there is a lower bound on the dimension of 
$\Psi(Z)\otimes k(x)$ that we can compute
with
a quadratic form. 
\end{enumerate}  
\end{thm}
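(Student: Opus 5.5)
The plan is to make $\Psi$ explicit and track $\iota$ through each step. Fix a symmetric polarizing line bundle $L$ on $A$ of type $(1,d)$. The map $\Psi$ sends $(Z,\xi)$ to (a shift of) the Fourier--Mukai transform of $I_Z\otimes L\otimes P_\xi$, suitably dualized. By the cohomology and base change computation underlying Yoshioka's isomorphism, $\Psi(Z)$ is a sheaf supported on a curve $C_Z$, with class $\hat l$, on $\hat A$.

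\textbf{The first claim.} $K_{d-1}A$ is the fibre of the summation map. On it, the curve $C_Z$ moves only in a fixed translate of the linear system, namely the one determined by the choice of $\hat L$. Since $\hat L$ is symmetric and $\Phi$ commutes with $\iota$ (because $\iota^*\mathcal P\cong(\iota\times\iota)^*\mathcal P$), a symmetric $Z$ gives $\iota^*\Psi(Z)\cong\Psi(Z)$. Hence $C_Z$ is a symmetric curve in the system of $\hat L$. One then checks that the translate is exactly $|\hat L|$: the sum of the points of $Z$ is zero, and this forces the determinant of $\Psi(Z)$'s support to be $\hat L$ rather than $t_y^*\hat L$ for some $y$.

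\textbf{Part (a).} Let $V_Z=H^0(A,I_Z\otimes L)$. Since $h^0(L)=d$ and $Z$ has length $d$, the map $H^0(L)\to H^0(L|_Z)$ gives an exact sequence
\[0\to V_Z\to H^0(L)\to H^0(\O_Z)\to H^1(I_Z\otimes L)\to 0.\]
The involution $\iota$ acts on every term once we fix the normalized linearization of $L$. The equation of $C_Z$ should be the determinant (a Fitting-ideal section) of the family of maps $H^0(L\otimes P_\xi)\to H^0(\O_Z)$ as $\xi$ varies. Concretely, $C_Z=\{\xi : H^0(L\otimes P_\xi)\to H^0(\O_Z\otimes P_\xi)\text{ not injective}\}$, and it is cut out by a section $s$ of $\hat L$. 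The sign $\iota^*s=\pm s$ is then the ratio of the determinants of $\iota$ on $H^0(L)$ and on $H^0(\O_Z)$. The first is a fixed sign computed from the dimensions $h^0(L)^\pm$, which are known from the theory of symmetric theta structures. So the eigenspace is determined by $\det(\iota|_{H^0(\O_Z)})$, which is the vector space associated to $Z$. I expect this identification of the determinant section, with its linearization signs, to be the main obstacle; the paper's structural result on parity of duals of symmetric line bundles is exactly what is needed to pin down the constant sign.

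\textbf{Part (b).} Take $x\in\hat A[2]$. By base change, $\Psi(Z)\otimes k(x)$ is (dual to) the cokernel of $H^0(L\otimes P_x)\to H^0(\O_Z)$, so its dimension is at least the dimension of the kernel $H^0(I_Z\otimes L\otimes P_x)$. Since $L\otimes P_x$ is symmetric and $Z$ is symmetric, this map is $\iota$-equivariant. Taking $\pm$-eigenspaces gives
\[\dim\bigl(\Psi(Z)\otimes k(x)\bigr)\ge \max\bigl(0,\ \dim H^0(\O_Z)^{\pm}-h^0(L\otimes P_x)^{\pm}\bigr).\]
The numbers $h^0(L\otimes P_x)^{\pm}$ are given by Mumford's formula. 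That formula involves the multiplicities of $L\otimes P_x$ at the $2$-torsion points, i.e.\ the values of the associated quadratic form $e_*^{L\otimes P_x}$ on $A[2]$, a quadratic form over $\F_2$. Meanwhile, $\dim H^0(\O_Z)^\pm$ is computed from how $Z$ meets $A[2]$: free orbits contribute equally to both eigenspaces, and each fixed point contributes according to its local structure. Combining these gives the quadratic-form lower bound.
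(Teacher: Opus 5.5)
\textbf{Part (b).} This is the paper's argument. $\Psi(Z)\otimes k(x)\cong H^1(A,I_Z\otimes L\otimes P_x)$ is the cokernel of the $\iota^*$-equivariant map $(\star)$ between two $d$-dimensional spaces, and comparing $\pm$-eigenspaces bounds it from below.

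\textbf{Part (a): a different, valid route.} The paper never writes down an equation for $C_Z$. It argues as follows:
\begin{enumerate}[(i)]
\item $x\in\hat{A}[2]$ lies on $C_Z$ whenever $\det(\iota^*)$ differs on the source and target of $(\star)$.
\item By \Cref{detZ}, the target determinant is independent of $x$ when $\Sigma Z=e_A$.
\item Hence $C_Z$ contains one of two complementary sets of $2$-torsion points, indexed by the type of $L\otimes P_x$.
\item These sets are matched with the base loci of $|\hat{L}|^{\pm}$ by a dimension count that relies on symmetric $Z$ sweeping out both eigensystems.
\end{enumerate}
You instead use the locally free resolution $0\to\Phi(L)\to\Phi(\O_Z\otimes L)\to\Psi(Z)\to 0$. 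Injectivity holds because $\Phi(I_Z\otimes L)$ is concentrated in degree $1$. So $C_Z$ is the divisor of $s=\det$ of this map, a section of $\det\Phi(\O_Z\otimes L)\otimes\hat{L}\cong P_{\Sigma Z}\otimes\hat{L}=\hat{L}$. Normalize the linearization of the Poincar\'e bundle at $(e_A,e_{\hat{A}})$ and evaluate at $e_{\hat{A}}$. This gives $\iota^*s=\delta s$ for the normalized linearization of $\hat{L}$, with $\delta=\det(\iota^*\mid H^0(L))\cdot\det(\iota^*\mid H^0(\O_Z\otimes L))$.

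This is correct and gives more than the paper's argument:
\begin{enumerate}[(i)]
\item an explicit sign, with $C_Z\in|\hat{L}|$ obtained in the same stroke and no partition-matching step;
\item evaluating at every $x\in\hat{A}[2]$, a conceptual proof of \Cref{detZ}, since an involutive linearization of the trivial bundle $\det\Phi(\O_Z\otimes L)$ is a constant $\pm1$;
\item the formula $q_{\hat{L}}(x)=\det(\iota^*\mid H^0(L\otimes P_x))\cdot\det(\iota^*\mid H^0(L))$, which is \Cref{oddodd}.
\end{enumerate}
For the same reason, do not invoke the parity-of-duals result to pin down the constant. The paper deduces \Cref{oddodd} from this theorem, so that would be circular. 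It is also unnecessary: the constant is $\det(\iota^*\mid H^0(L))$, read off from \Cref{dimcount} applied to $L$ itself. What the paper's route gives in return is the explicit list of $2$-torsion points forced onto $C_Z$.

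\textbf{A correction you must make.} You write $H^0(\O_Z)$ throughout and let the quadratic form enter only through $h^0(L\otimes P_x)^{\pm}$. For $(\star)$ to be equivariant, the target must be $H^0(\O_Z\otimes L\otimes P_x)$ with the action induced by the normalized linearization. The quadratic form then enters exactly at the $2$-torsion points $a\in\Supp Z$: with $M=L\otimes P_x\otimes\O_Z|_a$, $\iota^*$ acts on $\fm_a^iM/\fm_a^{i+1}M$ by $(-1)^iq_{L\otimes P_x}(a)$. So the target determinant carries a factor $\prod_i q_{L\otimes P_x}(a_i)^{l_i}$ that the untwisted action on functions misses.

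This is not cosmetic. Untwisted, a reduced $Z\subset A[2]$ has only the eigenvalue $+1$. Then for $d=4$ and $L$ odd:
\begin{enumerate}[(i)]
\item your bound at $x=e_{\hat{A}}$ would be $h^0(L)^-=3$ for every such $Z$, contradicting the smoothness of the Type~1 curves;
\item your sign in (a) would put all $140$ isolated fixed points in a single eigensystem, whereas Type~1 lies in $|\hat{L}|^+$ and Types~2 and~3 lie in $|\hat{L}|^-$.
\end{enumerate}
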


Since the multiplicity of $C_Z$ at a point $x$ is bounded below by the dimension of $\Psi(Z)\otimes k(x)$. 
We analyze families of examples $Z$ using the above theorem to prove the following result.

\begin{thm}\label{highmult}
Let $m\in\N$.
There are infinitely many $d\in\N$ so that every 
$(1,d)$-polarized abelian surface of Picard rank $1$ contains a curve 
with even (resp.\ odd) multiplicity at least $m$ in its polarizing linear system.
\end{thm}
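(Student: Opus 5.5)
We will deduce \Cref{highmult} from \Cref{eigen}(b) by exhibiting explicit families of symmetric $d$-points $Z\in K_{d-1}A$ for suitable $d$, and then reading off the multiplicity of $C_Z$ at a $2$-torsion point of $\hat A$. Since $A$ has Picard rank $1$, the isomorphism $\Psi$ of Yoshioka is available for every $(1,d)$-polarized $A$, so the construction uses only the group structure of $A$ and of $A[2]$, $K(L)$. The resulting curves $C_Z$ lie on $\hat A$ in $|\hat L|$, and $(\hat A,\hat L)$ is again of type $(1,d)$ with Picard rank $1$. Since every $(1,d)$-polarized Picard rank $1$ surface arises as such a dual, the conclusion transfers back to $A$. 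We will use that $\mult_x C_Z\geq \dim \Psi(Z)\otimes k(x)$.

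\emph{Construction.} We take $Z$ to be a reduced symmetric $d$-point supported on $2$-torsion points and on pairs $\{a,-a\}$, with its sum equal to $0$ so that $Z\in K_{d-1}A$. The aim is to make $\Psi(Z)$ have large fibre at some $x\in\hat A[2]$. The fibre $\Psi(Z)\otimes k(x)$ is governed, via Fourier--Mukai and base change, by cohomology of $I_Z\otimes L\otimes P_x$ with $P_x$ a $2$-torsion line bundle. Equivalently, it is governed by the failure of the points of $Z$ to impose independent conditions on $H^0(L\otimes P_x)$. The quadratic-form lower bound in \Cref{eigen}(b) should express this in terms of the parities of the $2$-torsion points in $Z$ relative to the symmetric bundle $L\otimes P_x$, that is, the values $e_*^{L\otimes P_x}(a)\in\{\pm1\}$. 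It also involves the dimensions $h^0(L\otimes P_x)^{\pm}$, which are roughly $d/2\pm$ a small constant. Heuristically, if $Z$ contains $k$ points of $A[2]$ on which the quadratic form takes the value, say, $-1$, all forcing the same eigenspace, then the bound is about $k-h^0(L\otimes P_x)^{\mp}$.

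\emph{Choice of $d$.} The difficulty is that $|A[2]|=16$ is fixed while $d$ grows, so $2$-torsion points alone cannot produce unbounded multiplicity. We will therefore use symmetric pairs $\{a,-a\}$ with $a\in K(L)$ or $a$ of small order, for instance $a\in A[2n]$. For this we choose $d$ in an arithmetic family, such as $d$ divisible by a large power of $2$ or $d=2^r m'$. Then $K(L)\cong(\Z/d)^2$ contains many points whose behaviour under $\iota$ and the Heisenberg action is controlled. We expect the vector space attached to $Z$ in \Cref{eigen} to decompose into pieces indexed by orbits, and the quadratic form to be computable orbit by orbit. The plan is to arrange that $m$ orbits all contribute to the same isotypic part at a single $x$, giving $\dim\Psi(Z)\otimes k(x)\geq m$.

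\emph{Parity and the main obstacle.} Varying which $2$-torsion points are included in $Z$ changes the bound by $\pm1$. Using the paper's structural result that for even $d$ the dual of an odd symmetric bundle is odd, we can pass between the eigensystems $|\hat L|^{\pm}$ and force the computed lower bound to be an exact dimension of prescribed parity, giving even or odd multiplicity $\geq m$. An exact value is needed because the statement asks for a specified parity. For exactness we pair the lower bound with an upper bound from $\chi=-1$ and rank-$1$ considerations. The main obstacle is evaluating the quadratic form of \Cref{eigen}(b) explicitly on the chosen family and showing the bound grows linearly in the number of orbits rather than being cancelled by $h^0(L\otimes P_x)^{\pm}$. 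We would first try the simplest family: $d=2^r$, with $Z$ a union of the orbit $\{0\}$ with pairs $\pm a_i$, $a_i\in K(L)[2^{r}]$. We would then compute the form directly in Mumford's theta-group coordinates.
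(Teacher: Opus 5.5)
There is a genuine gap at exactly the step you flag as the main obstacle, and the family you propose cannot get past it. The bound in \Cref{eigen}(b) only sees the $\iota^*$-isotypic decomposition. Every free orbit $\{a,-a\}$ with $a\neq -a$ contributes exactly one eigenvalue $+1$ and one eigenvalue $-1$ to $H^0(A,\O_Z\otimes L\otimes P_x)$ (see \Cref{sec.star}), whether or not $a\in K(L)$ and whatever its order. Suppose $Z$ is reduced, with $j_\pm$ points of $A[2]$ on which $q_{L\otimes P_x}=\pm1$ and the remaining points in pairs. Then $h^0(A,\O_Z\otimes L\otimes P_x)^+=\frac{d-j_+-j_-}{2}+j_+$, while $h^0(A,L\otimes P_x)^+=\frac d2+c$ with $|c|\leq 1$ by \Cref{dimcount}. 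The resulting bound $|\frac{j_+-j_-}{2}-c|$ is at most $9$ for every $d$. So making $m$ orbits land in the same isotypic part is impossible for $\iota$. The Heisenberg/theta-group refinement you allude to is not what \Cref{eigen}(b) provides, and you do not actually set it up.

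The missing idea is to use non-reduced subschemes: fat points $I_{Z_n}=\prod_p I_p^n$ at $2$-torsion points $p$. The $i$-th graded piece of $\O_{Z_n}|_p\otimes L\otimes P_x$ has dimension $i+1$ for $i<n$, and $\iota^*$ acts on it by $(-1)^iq_{L\otimes P_x}(p)$. Hence, for $n$ odd, each such fat point has an excess of $\frac{n+1}{2}$ in its $q(p)$-eigenspace, and this excess grows with $n$. For instance, take $L$ odd, $p\in\{e,k_1,k_2,k_3\}$ (where $q_L$ is trivial) and $d=2n(n+1)$. Then $h^0(A,\O_{Z_n}\otimes L)^+=(n+1)^2$ against $h^0(A,L)^+=n(n+1)-1$. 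So the fiber of $\Psi(Z_n)$ at $e$, and hence by \Cref{sally} the multiplicity of $C_{Z_n}$ there, is at least $n+2$. A similar family of six fat points gives a bound of $\frac32(n+1)$.

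Your parity step also does not work. You give no argument for an upper bound on the fiber dimension. In any case, an exact fiber dimension only bounds $\mult_xC_Z$ from below, so its parity says nothing about the parity of the multiplicity. The fact that duals of odd bundles are odd does not help with this. What is needed is \Cref{prop.eigencurve.mult}: an eigencurve has odd multiplicity exactly at the $2$-torsion points where the quadratic form takes the value opposite to the curve's eigenvalue, i.e.\ at the base points of its eigensystem. You locate the eigensystem containing $C_{Z_n}$ with the determinant criterion from the proof of \Cref{eigen}(a). If $\Sigma Z_n\neq e_A$, first translate $Z_n$ into $K_{d-1}A$, as the paper does in the even family. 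Then read off whether $x$ is a base point. Your observation that the result transfers from $\hat A$ back to an arbitrary $(1,d)$-polarized surface of Picard rank $1$ is fine.
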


Related to our exploration of the isomorphism $\Psi$, 
we also show that for a polarization of type $(1,d)$,
the dual of the unique odd polarizing line bundle is odd \Cref{oddodd}.

When we define $\Psi$ using a symmetric line bundle, its restriction 
to varieties of  Kummer type $\Psi:K_{d-1}A\to K_{\hat{A}}(0,\hat{l},-1)$
respects the fixed loci of $\iota^*$, a symplectic automorphism. 
Symplectic automorphisms are significant in the study of hyperk\"ahler varieties, and the number and dimensions of the  
components of these particular fixed loci was determined in \cite{KMO}. 
Although components of higher dimension have been shown to play a significant role in, for instance, cohomology \cite{HasTsc}, the isolated fixed points 
remain unexplored. 
In order to shed light on this structure in $K_{\hat{A}}(0,\hat{l},-1)$, where we expect to find curves with interesting symmetries associated with these points, 
we apply our methods to the case when $d=4$.
We obtain the following analysis:

\begin{thm}\label{140}
Let $Z\in K_3A$ be one of the $140$  isolated fixed points of the involution $\iota$.
\begin{enumerate}[(a)]
\item The curve $C_Z$ is one of the following:
\begin{enumerate}[(i)]
\item a smooth hyperelliptic curve ($64$ cases),
\item a curve of geometric genus $3$ with two nodes ($72$ cases),
\item a curve of geometric genus $2$ with an ordinary triple point singularity ($4$ cases).
\end{enumerate}
\item The lower bound given by \Cref{eigen}(b) is sharp. 
\item We are able to specify each sheaf $\Psi(Z)$: if $C_Z$ is smooth, we 
write down its divisor class, and if $C_Z$ is singular, it is the 
pushforward of a line bundle on the normalization $\widetilde{C}_Z$ whose divisor class we can write down. 
\end{enumerate}
\end{thm}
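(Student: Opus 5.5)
We want to prove \Cref{140}: for each of the $140$ isolated fixed points $Z$ of $\iota$ on $K_3A$, describe the curve $C_Z$, show the bound of \Cref{eigen}(b) is sharp, and identify the sheaf $\Psi(Z)$. The plan is to enumerate the $140$ isolated fixed points explicitly and sort them into orbits under the natural group actions. Any symmetric $Z\in K_3A$ of length $4$ with sum $0$ is supported on $A[2]$ or on pairs $\{a,-a\}$. The isolated fixed points in \cite{KMO} for $d=4$ should be the reduced subschemes of four distinct $2$-torsion points summing to $0$, together with certain non-reduced symmetric schemes supported at $2$-torsion points. I would first recover the count $140$ by listing these types. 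The translation group $A[2]$ preserves $K_3A$ up to the sum condition, and the symplectic group $Sp(A[2])$ acts through the theta-group/Heisenberg symmetries of $L$. Together these split the $140$ points into a few orbits, and it then suffices to analyze one representative per orbit.

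For each orbit type I would run three computations.

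1. Compute $G_Z\le K(L)$ and apply \Cref{structure_thm_intro}.
   1. When $|G_Z|=4$, the curve $C_Z$ has geometric genus $2$.
   2. For example, if $Z$ is a coset of a subgroup of order $4$ of $K(L)\cong(\Z/4)^2$ meeting $A[2]$, then $C_Z$ is a genus $2$ curve in $|\hat L|$, which has arithmetic genus $5$.
   3. When $|G_Z|=2$, the genus bound is $d/|G_Z|+1=3$.
   4. When $G_Z$ is trivial, the genus may be $5$, and I expect these are the smooth cases.
2. Apply \Cref{eigen}(b) at each $x\in\hat A[2]$, evaluating the quadratic form on the vector space attached to $Z$. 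This gives lower bounds $\dim\Psi(Z)\otimes k(x)\ge r_x$, hence $\mult_x C_Z\ge r_x$.
3. Compare $p_a=5$ with the geometric genus: $\delta$-invariants force $5-g\ge\sum_x\binom{r_x}{2}$.
   1. For genus $2$, the deficit is $3$. A single point with $r_x=3$ gives exactly $\delta=3$, which yields the ordinary triple point, since by \cite{LangeSernesi} singularities of genus $2$ curves are ordinary.
   2. For genus $3$, the deficit is $2$. Two points with $r_x=2$ give two nodes.
   3. When all $r_x\le1$ and $G_Z$ is trivial, I must show $C_Z$ is smooth. Hyperellipticity then follows from $\iota$-invariance: $C_Z$ is symmetric, so $\iota$ restricts to an involution of $C_Z$ whose fixed points are the $2$-torsion points on it. Counting these via \Cref{eigen}(a) gives $12=2g+2$, so $C_Z/\iota\cong\P^1$.

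The equalities in these inequalities give sharpness, which is part (b). The case counts $64$, $72$, $4$ come from the orbit sizes.

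The main obstacle is upper-bounding the singularities: proving that $C_Z$ has no singularities beyond those detected by \Cref{eigen}(b), and in particular proving smoothness in the $64$ cases. I would first try the genus constraint. If $G_Z$ is trivial but $C_Z$ were singular, its normalization would be a lower-genus curve mapping to $\hat A$, and $\iota$-symmetry would force singular points to come in $\iota$-orbits or lie at $2$-torsion points. Non-$2$-torsion singularities come in pairs, and I would rule them out by a dimension count against the finiteness results for genus $\le 4$ curves in \cite{KLC} and \cite{BorowkaOrtega}. Singularities at $2$-torsion points are controlled by the fibre dimension. If this fails, I would instead compute $\Psi(Z)$ directly on a representative via the Fourier--Mukai transform of $I_Z$.

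For part (c), once $C_Z$ and its normalization $\nu:\widetilde C_Z\to C_Z$ are known, I would show $\Psi(Z)=\nu_*M$. Here $M$ is a line bundle of degree $\chi+g-1$, with $\chi(\Psi(Z))=-1$, so $\deg M=g-2$. Its class is pinned down by its $\iota$-linearization and the fibre dimensions at Weierstrass and $2$-torsion points, and this identification is what will let us write down the divisor class.
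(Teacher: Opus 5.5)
\textbf{There is a genuine gap in the smooth case.} For the singular curves your architecture agrees with the paper's: sort by $G_Z$, get the genus from \Cref{genus2etc}, get lower bounds from \Cref{eigen}(b), compare $\delta$-invariants with $p_a=5$, and use \cite{LangeSernesi} for the triple point. But you leave open the step you yourself flag, smoothness of $C_Z$ in the $64$ cases, and your suggested remedies do not close it.

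Your claim that singularities at $2$-torsion points are ``controlled by the fibre dimension'' runs the wrong way. \Cref{sally} bounds the fibre dimension above by the multiplicity, and \Cref{eigen}(b) bounds it below. So a detected $1$-dimensional fibre says nothing about smoothness; already $\O_C$ has $1$-dimensional fibres at singular points. The appeal to a dimension count against finiteness results is not an argument as it stands.

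The missing observation is that these curves lie in a zero-dimensional eigensystem. By \Cref{oddodd}, $\hat L$ is odd, so by \Cref{dimcount} with $d=4$ the eigensystem of $|\hat L|$ with $12$ base points is a single curve. The eigenvalue computation places all $64$ of these $C_Z$ in it, so there is only one curve to understand. The paper identifies it with the smooth genus-$5$ hyperelliptic curve of \cite{BorowkaOrtega}.

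Alternatively, your own fixed-point count closes the gap if you run it in the opposite direction:
\begin{enumerate}[(1)]
\item By \Cref{prop.eigencurve.mult}, $C_Z$ passes through all $12$ base points with odd multiplicity.
\item A point of multiplicity $\geq 3$ has $\delta\geq 3$, while $p_a-g\leq 3$ because $g\geq 2$. Hence there is at most one such point.
\item So at least $11$ base points are smooth $\iota$-fixed points of $C_Z$, giving fixed points of the induced involution on $\widetilde C_Z$.
\item A nontrivial involution of a genus-$g$ curve has at most $2g+2$ fixed points. Therefore $g\geq 5=p_a$, and $C_Z$ is smooth.
\end{enumerate}

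\textbf{Smaller points that also need repair.}
\begin{enumerate}[(1)]
\item There are no non-reduced isolated fixed points to add. The $4$-element subsets of $A[2]$ summing to $e_A$ already number $\binom{16}{3}/4=140$.
\item The pairing $e_L$ on $A[2]$ is degenerate, so its isometry group is not $Sp(A[2])$. Its isometries are also not induced by automorphisms of a fixed $(A,L)$ of Picard rank $1$, since theta-group elements act on $A$ by translations. Beyond $A[2]$-translations, which \Cref{lem.trten} controls, you cannot transfer conclusions along orbits. The paper simply computes all $140$ cases.
\item For the genus-$3$ curves, $r_x=2$ with $\delta_x=1$ allows a cusp as well as a node. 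You need \Cref{nocusp} to exclude the cusp.
\item For (c), $\Psi(Z)=\nu_*M$ is not automatic. It needs \Cref{pushnode} at the nodes and \Cref{lem.triple} at the triple point, where the partial normalization $\Spec\sEnd(\Psi(Z))$ need not be Gorenstein.
\item The fact that the eigenvalue partition of the Weierstrass points determines $M$ is \Cref{lem.unique}. Here those points include the preimages of the nodes and of the triple point, which $j$ fixes because $\iota$ preserves each branch. The lemma applies because $\deg M=g-2\equiv g \pmod 2$.
\end{enumerate}
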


In \Cref{Psi}, we introduce background on abelian varieties, polarizations, and quadratic forms.
In \Cref{sec.moduli}, we examine the isomorphism $\Psi:\Hilb^d_A\times\hat{A}\to M_{\hat{A}}(0,\hat{l},-1)$ and its restriction to Albanese fibers $K_{d-1}A\to K_{\hat{A}}(0,\hat{l},-1)$. Then, we discuss some general properties that $\Psi$ satisfies and prove \Cref{structure_thm_intro}. 
In \Cref{sec.compute}, we prove \Cref{eigen} and \Cref{oddodd}, 
and discuss further methods for analyzing sheaves $\Psi(Z)$.
We then have two sections of applications: we produce examples of curves with high-multiplicity singularities in \Cref{sec.nonnodal}, proving \Cref{highmult}, and analyze $140$ isolated fixed points when $d=4$ in \Cref{sec.140ex}, proving \Cref{140}.
Finally, we discuss the context of these examples and 
pose questions on the general nature of the isolated fixed points of $\iota^*$ acting on $K_{\hat{A}}(0,\hat{l},-1)$ and curves in linear systems of abelian surfaces.
Eigenvalues for \Cref{sec.140ex}
computed with \textsf{Sage} \cite{sage}
code \cite{HMM}
are placed in an appendix.

\section*{Acknowledgements}
K.H.\ was supported by an NSERC Discovery Grant.
G.M.\ was supported by an NSERC Canada Graduate Research Scholarship -- Master's (CGRS M).
P.M.\ was supported by the Pacific Institute for Mathematical Sciences and a grant from the Simons Foundation International [SFI-MPS-T-Institutes-00020822, OY].

\section{Background on abelian surfaces}\label{Psi}

\subsection{Abelian varieties, polarizations, Fourier--Mukai}\label{bkgrd}
Given an abelian variety $A$, we denote the 
identity element by $e_A$, and the
inverse group law map by $\iota:A\to A$, $\iota(a)=-a$. 
We refer to any object invariant under the appropriate action of $\iota$ as \emph{symmetric}.

Let $\hat{A}$ be the dual abelian variety.
Given a point $x\in A$, we denote the corresponding line bundle on $\hat{A}$ by $P_x\in \Pic^0(\hat{A})$. 
Given a line bundle $L$ on $A$, we denote its N\'eron--Severi class by $l$ and let $\phi_L:A\to \hat{A}$ be given by $\phi_L(a)=t_a^*L\otimes L^{-1}$ with $K(L):=\ker\phi_L$. We denote the linear system of $L$ by $|L|:=\P(H^0(A,L))$. The eigenspaces of $|L|$ and $H^0(A,L)$ under the action of $\iota^*$
with eigenvalue $\pm1$  are denoted 
$|L|^{\pm}$ and $H^0(A,L)^{\pm}$.

Given a polarizing line bundle $L$ on an abelian surface $A$, 
its type $(d_1,d_2)$ is the unique pair of integers so that $d_1|d_2$ and $K(L)\simeq (\Z/d_1\Z)^2\times (\Z/d_2\Z)^2$. 
We call the associated Weil pairing on the $2$-torsion $e_L:A[2]\times A[2]\to \{\pm1\}$, $e_L:=e_2(-,\phi_L(-))$ where $e_2$ is the pairing between $A[2]$ and $\hat{A}[2]$.

In the case where $L$ is of $(1,d)$-type and $d$ is even,
we 
name the bases of $A[2]\simeq\F_2^{\oplus 4}$ and $\hat{A}[2]\simeq \F_2^{\oplus 4}$ as
$\{w_1,k_1,w_2,k_2\}$ and
$\{x_1,y_1,x_2,y_2\}$, with 
$k_3:=k_1+k_2$, etc. 
We set the convention that the 
matrix for $e_L$ and $\phi_L$ is:
\[
\left(
\begin{smallmatrix}
0&0&1&0\\
0&0&0&0\\
1&0&0&0\\
0&0&0&0
\end{smallmatrix}\right).
\]
Then, $K(L)\cap \hat{A}[2]=\langle k_1,k_2\rangle$ and 
$\phi_L(w_1)=x_2$ and $\phi_L(w_2)=x_1$, hence
$\phi_L(A[2])=\langle x_1,x_2\rangle$. 

We refer to the Fourier--Mukai transform with with kernel the Poincar\'e bundle as \emph{the Fourier--Mukai transform}. This functor relating sheaves on an abelian surface to those on its dual is an essential ingredient in our constructions.
It is the exact functor
mapping from the bounded derived category of coherent sheaves on $A$ to those on $\hat{A}$
as follows:
\begin{align}
  \Phi:D(A)&\to D(\hat{A})\\
  \fF&\mapsto  Rp_{2*} (Lp_1^*(\fF)\otimes^{L} P_A)
\end{align}
The maps $p_1,p_2$ are the projections from $A\times \hat{A}$ to its factors. The functor $\Phi$ is an equivalence \cite{Mukai} and has the following properties.

\begin{lemma}[{{\cite[Ex.~2.6, (3.1), Thm.\ 2.2]{Mukai}}}]\label[lemma]{FMfacts}
Let $x\in A$, $y\in \hat{A}$, and let $P_x\in \Pic^0(A)$, $P_y\in \Pic^0(\hat{A})$ be the corresponding line bundles.
For any $\fF\in D(A)$, we have:
\begin{align}
\quad\quad\quad\quad\Phi(F\otimes P_y)&=t_y^*\Phi( F), &
\Phi(t_x^*F)&=P_{-x}\otimes\Phi(F),\quad\quad\quad\quad
\label{tensortranslate}
  \\
  \Phi(P_y)&= k(-y)[-g],\label{linebundleskyscraper} &
                                                         \Phi(k(x))&=P_x,\\
  \Phi\circ\Phi&\cong \iota^*\circ [-g] \rlap{$\text{ where } g=\dim(A)$.}
\label{compose}                     
\end{align}
\end{lemma}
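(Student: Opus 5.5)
These are standard facts from Mukai, and I would prove them directly from the definition $\Phi(\fF)=Rp_{2*}(Lp_1^*\fF\otimes^L P_A)$. The tool throughout is the universal property of the Poincar\'e bundle $P_A$ on $A\times\hat{A}$: $P_A|_{\{x\}\times\hat{A}}\cong P_x$, $P_A|_{A\times\{y\}}\cong P_y$, normalized so that $P_A|_{\{e_A\}\times\hat A}$ and $P_A|_{A\times\{e_{\hat A}\}}$ are trivial. I would treat the formulas in the order: skyscraper, translation/tensor, line bundle, composition.

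\emph{Skyscraper sheaves.} $p_1^*k(x)=\O_{\{x\}\times\hat{A}}$, and tensoring with $P_A$ gives $P_A|_{\{x\}\times\hat A}=P_x$. Since $p_2$ restricted to $\{x\}\times\hat{A}$ is an isomorphism, $\Phi(k(x))=P_x$.

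\emph{Translation and tensor formulas.} These follow from the theorem of the square in its family form. The first key identity is $(t_x\times 1)^*P_A\cong P_A\otimes p_2^*P_x$; both sides restrict to the same bundles on the fibres $\{a\}\times\hat A$ and $A\times\{y\}$ up to the normalization, so the seesaw principle applies. The second is $(1\times t_y)^*P_A\cong P_A\otimes p_1^*P_y$, by the same argument. For $\Phi(t_x^*F)$, I would change variables by $t_{-x}\times 1$, use base change and the projection formula, and pick up the factor $p_2^*P_{-x}$. The tensor formula follows symmetrically by pulling back along $1\times t_y$.

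\emph{Line bundles in $\Pic^0$.} Combining the previous steps gives $\Phi(P_y)=Rp_{2*}\bigl((1\times t_{y})^{*}P_A\bigr)$ up to the sign convention. Its fibre at $z$ is computed by $R\Gamma(A,P_{y+z})$ (or $P_{z-y}$, depending on the convention). This vanishes unless the bundle is trivial, by the standard vanishing of the cohomology of nontrivial $\Pic^0$ line bundles. Hence $\Phi(P_y)$ is supported at a single point. Its length and degree come from $R\Gamma(A,\O_A)=\bigwedge^\bullet H^1(\O_A)$ together with the known computation $R^gp_{2*}P_A=k(e_{\hat A})$ and $R^ip_{2*}P_A=0$ for $i\neq g$. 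This last computation is Mukai's key lemma, and I would cite it rather than reprove it. The result is $\Phi(P_y)=k(-y)[-g]$.

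\emph{Composition.} This is the main obstacle. The kernel of $\Phi_{\hat A\to A}\circ\Phi_{A\to\hat A}$ is $Rp_{13*}(p_{12}^*P_A\otimes p_{23}^*P_{\hat A})$ on $A\times\hat A\times A$. The restriction of the middle tensor product to the fibre over $(a,a')$ is $P_{a+a'}$ on $\hat A$, so by the same vanishing the pushforward is supported on the antidiagonal $\{a'=-a\}$. Cohomology and base change, using $R^g$ of the trivial bundle as in the previous step, identifies the kernel with $\O_{\Gamma_\iota}[-g]$. This gives $\Phi\circ\Phi\cong\iota^*\circ[-g]$.

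The only genuinely nontrivial input is the vanishing-and-base-change computation of $Rp_{2*}P_A$, which I would take from Mukai.
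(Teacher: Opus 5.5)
Everything except the composition formula is fine. It is the standard argument from Mukai, which is all the paper offers: the paper gives no proof, only the citation.

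\begin{itemize}
\item Seesaw together with the normalization of $P_A$ gives $(t_x\times 1)^*P_A\cong P_A\otimes p_2^*P_x$ and $(1\times t_y)^*P_A\cong P_A\otimes p_1^*P_y$. Flat base change and the projection formula then give the translation and tensor formulas with the stated signs.
\item For $\Phi(P_y)$ you need neither the fibrewise support analysis nor the hedging on signs. Taking $F=\O_A$ in the tensor formula gives $\Phi(P_y)\cong t_y^*\Phi(\O_A)\cong t_y^*k(e_{\hat A})[-g]=k(-y)[-g]$ directly from the key lemma.
\end{itemize}

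The gap is in the composition step. You know that $p_{12}^*P_A\otimes p_{23}^*P_{\hat A}$ restricts to $P_{a+a'}$ over each point $(a,a')$. Applying cohomology and base change pointwise then tells you only that the kernel is supported on the antidiagonal, and what its derived fibres are. That data does not determine the kernel.

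\begin{itemize}
\item Derived fibres at a point depend only on the stalk there. So $\mathcal{M}[-g]$, with $\mathcal{M}$ a nontrivial line bundle on $\Gamma_\iota$, has the same support and the same derived fibres as $\O_{\Gamma_\iota}[-g]$.
\item Yet such a kernel would give $\Phi\circ\Phi\cong \iota^*(-)\otimes\mathcal{M}'[-g]$ for a nontrivial line bundle $\mathcal{M}'$.
\item Pointwise data also does not rule out a sheaf living on a thickening of $\Gamma_\iota$.
\end{itemize}

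What you need is the global version of your fibrewise observation. By seesaw and the normalization of $P_A$,
\[
p_{12}^*P_A\otimes p_{23}^*P_{\hat A}\cong\mu^*P_A, \qquad \mu(a,\xi,a')=(a+a',\xi).
\]
Consider the square formed by $\mu$, $p_{13}$, $p_1\colon A\times\hat A\to A$, and the addition map $m\colon A\times A\to A$. It is Cartesian and $m$ is flat, so flat base change gives
\[
Rp_{13*}\bigl(p_{12}^*P_A\otimes p_{23}^*P_{\hat A}\bigr)\cong m^*Rp_{1*}P_A\cong m^*k(e_A)[-g]\cong\O_{m^{-1}(e_A)}[-g]=\O_{\Gamma_\iota}[-g].
\]
The middle isomorphism is the key lemma applied to $\hat A$, namely $Rp_{1*}P_A\cong k(e_A)[-g]$. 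This is essentially Mukai's proof of his Theorem~2.2. It uses only tools you already invoked for the translation formulas: the family theorem of the square and flat base change.
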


\subsection{Symmetric polarizing line bundles and quadratic forms}\label{sym.sec}

The symmetric line bundles with N\'eron--Severi class $l$ form a principal homogeneous space over the $2$-torsion line bundles $\Pic^0(A)[2]$, hence there are $2^{2g}$ of them, where $g=\dim(A)$. 

Let $L$ be symmetric a polarizing line bundle of $(1,d)$-type on an abelian surface $A$. We may choose an isomorphism $f:\iota^*L\to L$ that is normalized so  the fiber over $e_A$ is the identity map. For any $a\in A[2]$, the restriction of $f$ to the fiber of $L$ over $a$ is given by multiplication by a nonzero scalar $q_L(a)\in \{\pm 1\}$. The map $q_L:A[2]\to \{\pm 1\}$ given by $a\mapsto q_L(a)$ is a quadratic form over the Weil pairing on $A[2]$. 

The quadratic forms  $q:A[2]\to \{\pm 1\}$ have a natural torsor structure over $\hat{A}[2]$, where $x\in \hat{A}[2]$ acts on a quadratic form $q$ to produce the form $q_x$ as follows:
\begin{equation}\label{qfaction}
q_x(a):=q(a)e_2(a,x)
\quad\text{for $a\in A[2]$}.
\end{equation}
If we act on $q_L$ by $x$, we get the quadratic form associated with $L\otimes P_x$.

We distinguish different types of symmetric line bundles $L\in \Pic^l(A)$ when $l$ is an ample class.
If $d$ is odd, then $L$ must 
have at least one \textit{symmetric theta structure} (sts). However, when $d$ is even, having an sts is
equivalent to 
one of $h^0(L)^{+}$ or $h^0(L)^{-}$ having maximal dimension $\frac12 h^0(L)+1$ (see \cite[\S6.9,\S6.10(10)]{BirkenhakeLange}). 
If $L$ has at least one sts, 
then $L$ is \emph{even} or \emph{odd} according to the parity of $q_L$. If $q_L$ is even (odd), it has a (non)trivial valuation at the majority of the points in $A[2]$. If $d$ is odd, every symmetric line bundle has a sts and there are $10$ even line bundles and $6$ odd line bundles. If $d$ is even, there are $12$ line bundles with no sts, $4$ with sts, and among those, $3$ even and $1$ odd. 

We show the set of all possible quadratic forms 
in the case where $d$ is even 
in \Cref{tab:qform_labels}, choosing the convention 
that $e_{\hat{A}}$ corresponds to the unique odd line bundle.

We aggregate the various qualities of symmetric line bundles $L$ in the following proposition. The base loci of $|L|^{\pm}$ are contained in $A[2]$ and denoted by $A[2]^{\mp}$, named for the value that $q_L$ takes at these base points (cf.~\Cref{prop.eigencurve.mult}).

\begin{prop}[{{\cite[Prop.\ 4.7.5, Exercise~6.10(10)]{BirkenhakeLange}}}, see also \cite{BolognesiMassarenti}]\label[prop]{dimcount}

Let $A$ be an abelian surface that is $(1,d)$-polarized by a symmetric line bundle $L$. \\
If $d$ is odd, then $L$ has sts, and:
\begin{align*}
&\text{$L$ even:}\quad
& h^0(A,L)^+&=\tfrac{d+1}{2},\ \#A[2]^-=6,\quad 
  &h^0(A,L)^-&=\tfrac{d-1}{2}, \ \#A[2]^+=10.\\[1ex]
&\text{$L$ odd:}\quad
& h^0(A,L)^+&=\tfrac{d-1}{2},\ \#A[2]^-=10,
  &h^0(A,L)^-&=\tfrac{d+1}{2},\ \#A[2]^+=6.
\end{align*}
If $d$ is even: 
\begin{align*} 
&\text{$L$ sts, even: }\quad &h^0(A,L)^+&=\tfrac{d}{2}+1,\ \# A[2]^-=4,\quad\quad
&h^0(A,L)^-&=\tfrac{d}{2}-1,\ \#A[2]^+=12.\\
&\text{$L$ sts, odd: }\quad &h^0(A,L)^+&=\tfrac{d}{2}-1,\ \# A[2]^-=12,
& h^0(A,L)^-&=\tfrac{d}{2}+1,\ \#A[2]^+=4.\\
&\text{$L$ no sts: }
  \quad &h^0(A,L)^+&=\tfrac{d}{2},\ \# A[2]^-=8,
&h^0(A,L)^-&=\tfrac{d}{2},\ \#A[2]^+=8.  
\end{align*}
\end{prop}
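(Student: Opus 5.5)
The plan is to combine three standard inputs: the Riemann--Roch/Atiyah--Bott fixed point formula for the trace of $\iota^*$ on $H^0(A,L)$, Mumford's description of symmetric theta structures via the quadratic form $q_L$, and the relation between $q_L$ and the multiplicities of symmetric divisors at $2$-torsion points. First, since $L$ is ample, $H^i(A,L)=0$ for $i>0$, and so $h^0(A,L)=d$. The holomorphic Lefschetz formula for the involution $\iota$, whose fixed points are the $16$ isolated points of $A[2]$ with $d\iota=-1$ there, gives
\[
\operatorname{tr}\bigl(\iota^*\mid H^0(A,L)\bigr)=\sum_{a\in A[2]}\frac{q_L(a)}{\det(1-d\iota_a)}=\frac14\sum_{a\in A[2]}q_L(a).
\]
Here the normalization $f$ at each fixed point contributes exactly $q_L(a)$. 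Together with $h^0(L)^++h^0(L)^-=d$, this determines both eigenspace dimensions from the quantity $\sum_a q_L(a)=\#\{q_L=1\}-\#\{q_L=-1\}$.

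Next I will count the values of $q_L$. The form $q_L$ is a quadratic form over $e_L$, which has rank $4$ when $d$ is odd (since $e_L$ is nondegenerate on $A[2]$) and rank $2$ when $d$ is even, with radical $\langle k_1,k_2\rangle$ by the matrix convention above. If $d$ is odd, a nondegenerate form in $4$ variables over $\F_2$ has $10$ or $6$ zeros (values $+1$) according to whether its Arf invariant is even or odd. This gives trace $\pm1$, hence $h^0(L)^{\pm}=\frac{d\pm1}{2}$ as listed. If $d$ is even, $q_L$ restricted to the radical is linear. If it is trivial there, $q_L$ is a pullback of a rank-$2$ form, giving $12$ or $4$ values equal to $+1$ (even/odd) and trace $\pm2$. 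If it is nontrivial, $q_L$ is balanced with $8$ each and trace $0$. The sts cases correspond exactly to the first situation, by \cite[\S6.9]{BirkenhakeLange}: existence of a symmetric theta structure amounts to $q_L$ vanishing on $K(L)\cap A[2]$. This also yields the counts $3$ even, $1$ odd, and $12$ with no sts.

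Finally, I need to identify the base loci. Let $s\in H^0(L)^{\pm}$. At $a\in A[2]$, the local parity of $s$ (the parity of $\mult_a$ of its divisor) is governed by $\pm q_L(a)$: if $\pm q_L(a)=-1$ then $s$ is odd at $a$ and so vanishes there. Hence $|L|^{+}$ has base points where $q_L=-1$, and $|L|^{-}$ has base points where $q_L=+1$, which matches the notation $A[2]^{\mp}$. The counts follow from the previous step. I expect the main obstacle to be justifying that these are exactly the base points, i.e.\ that no other points are base points for the general member. For the statement as cited I would simply define $A[2]^{\mp}$ as these loci and defer to \cite[Prop.~4.7.5]{BirkenhakeLange} and \cite{BolognesiMassarenti} for the sharpness. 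The sign conventions in the Lefschetz formula also need care, and I would check them against the principal case $d=1$, where the odd theta divisor has $h^0(L)^-=1$.
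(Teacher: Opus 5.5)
Your proposal is correct. Since the paper gives no proof of this proposition (it quotes Birkenhake--Lange, Prop.~4.7.5 and Exercise~6.10(10), with Bolognesi--Massarenti), your argument is a genuine alternative rather than a reconstruction.

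You obtain
\[
h^0(A,L)^{\pm}=\tfrac{d}{2}\pm\tfrac{1}{8}\bigl(\#\{a:q_L(a)=1\}-\#\{a:q_L(a)=-1\}\bigr)
\]
directly from the holomorphic Lefschetz formula. The only inputs are $H^{>0}(A,L)=0$ and $d\iota_a=-\mathrm{id}$ at all $16$ fixed points, so $\det(1-d\iota_a)=4$ and the sign conventions are harmless. You then do the $\F_2$-combinatorics by hand. For $d$ odd, $e_L$ is nondegenerate and the Arf invariant decides between $10$ and $6$ values $+1$. For $d$ even, everything turns on whether the character $q_L|_{K(L)\cap A[2]}$ on the radical is trivial, which gives $12$ or $4$ values $+1$ and trace $\pm2$, or nontrivial, which gives $8$ and $8$ and trace $0$.

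What your route buys is a structural explanation of the three even-$d$ cases. It also gives an actual proof of the equivalence the paper quotes from Exercise~6.10(10), that sts holds iff an eigenspace has dimension $\tfrac d2+1$. This is granted the \S6.9 criterion that sts means $q_L\equiv1$ on $K(L)\cap A[2]$. That same criterion gives ``$d$ odd $\Rightarrow$ sts'' at once, since the radical is then trivial; you should say so explicitly. The citation route is simply shorter.

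On $A[2]^{\mp}$: your parity argument, which is the paper's own eigencurve-multiplicity proposition, shows that the level sets of $q_L$ lie in the base loci of $|L|^{\pm}$. Reading $A[2]^{\pm}$ as these level sets, as Birkenhake--Lange do, is the right interpretation. The literal claim that the base loci are exactly these finite sets fails whenever an eigenspace has dimension at most $1$; for instance, $|L|^+$ for $L$ odd and $d=4$ is a single curve. With that reading, your counts give exactly the displayed cardinalities.
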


Furthermore, the following result shows that an eigencurve $C\in |L|^{\pm}$
 must pass through the points in the base locus of its eigensystem with odd multiplicity and other $2$-torsion points with even (possibly $0$) multiplicity.

\begin{prop}\label[prop]{prop.eigencurve.mult}
Let $a\in A[2]$ and $C\in|L|^{\pm}$ be an eigencurve of $\iota^*$ with eigenvalue $\lambda_C=\pm 1$.
Then, $\mult_aC$ is even 
if $q_L(a)=\lambda_C$  and odd if $q_L(a)=-\lambda_C$.
\end{prop}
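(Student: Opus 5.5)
The plan is to work locally at $a$, using the normalized isomorphism $f:\iota^*L\to L$ that defines $q_L$. Let $s\in H^0(A,L)^{\pm}$ be a section with $C=Z(s)$, so that $f(\iota^*s)=\lambda_C\, s$ with this normalization. Since $a\in A[2]$, the involution $\iota$ fixes $a$. Choose local analytic coordinates $(z_1,z_2)$ centred at $a$ in which $\iota$ acts as $z\mapsto -z$; this is possible because $d\iota=-\id$ on the tangent space and $\iota$ is the group inverse, so translating from $e_A$ makes it literally linear.

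Next, trivialize $L$ near $a$ by a local frame $\sigma$. The isomorphism $f$ gives $f(\iota^*\sigma)=u\cdot\sigma$ for a nowhere-vanishing holomorphic function $u$ near $a$. By definition of $q_L$, $u(a)=q_L(a)$.

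The key step is to replace $\sigma$ by a frame that is itself an eigenvector of the lifted involution. Set $\tau=\tfrac12(\sigma+q_L(a)f(\iota^*\sigma))$. Because $f\circ\iota^*f$ is the identity (the normalization at $e_A$ forces the composite, a constant, to be $1$), $\tau$ satisfies $f(\iota^*\tau)=q_L(a)\tau$. Moreover $\tau(a)=\sigma(a)\neq0$, so $\tau$ is a frame near $a$.

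Now write $s=h\tau$ with $h$ holomorphic near $a$. The equivariance $f(\iota^*s)=\lambda_C s$ becomes
\[
h(-z)\,q_L(a)=\lambda_C\, h(z).
\]
Hence $h$ is even in $z$ if $q_L(a)=\lambda_C$, and odd if $q_L(a)=-\lambda_C$. Since $\mult_a C$ is the degree of the lowest nonzero homogeneous part of $h$, and an even (resp.\ odd) function has only even (resp.\ odd) degree homogeneous parts, the claim follows. In the odd case $h(a)=0$ automatically, which recovers the base-locus description $A[2]^{\mp}$. Note also that $h\not\equiv0$, since $C$ is a curve.

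The main point requiring care is the compatibility of normalizations, namely that $f\circ\iota^*f=\id$ and that $q_L(a)$ is exactly the scalar of $f$ on the fibre at $a$. The first holds because $f\circ\iota^*f$ is an automorphism of $L$, hence a constant, and it equals $1$ at $e_A$. The second holds by definition. Once these are granted, the parity argument is immediate.
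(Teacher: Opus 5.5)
Your proposal is correct and is essentially the paper's argument. The paper compares $\lambda_C$ with the scalar $(-1)^r q_L(a)$ by which $\iota^*$ acts on $\fm_a^rL_a/\fm_a^{r+1}L_a$, a fact it cites from Polishchuk's book. Your $\iota$-eigen-frame $\tau$ in linearized coordinates is a direct proof of that cited fact, after which the parity of the leading homogeneous term of $h$ gives the result.
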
  

\begin{proof}
Let $s\in\Gamma(A,L)$ be a section whose vanishing is $C$ and 
fix an isomorphism 
$f\colon\iota^*L\to L$ 
where the fiber over $e_A$ is the identity map.
Let $a\in A[2]$ and $s_a\in L_a$ the restriction of $s$.
We know that $s_a\in\fm_a^rL_a\setminus\fm_a^{r+1}L_a$ for some $r\geq0$ (where $\fm_a^0=\O_{A,a}$); this $r$ is the multiplicity of $C$ at $a$. As noted in \cite[\S13.2]{Polishchukbook}, the action of $\iota^*$ on $\fm_a^rL_a/\fm_a^{r+1}L_a$, and hence on $s_a$, is given by multiplication by 
\[
(-1)^rq_L(a)f|_e=(-1)^rq_L(a).
\]
Since the eigenvalue of $\iota^*$ acting on $s$, and hence on $s_a$, is $\lambda_C$, we have $\lambda_C=(-1)^rq_L(a)$.
\end{proof}

\begin{table}[h]
    \centering
    \begin{tabular}{|c||c|c|}
        \hline
        
        & Quadratic Form & \begin{tabular}{c|c} $e$ & $k_1$\\
        \hline $k_2$ & $k_3$\end{tabular}$\;\obot\;$\begin{tabular}{c|c} $e$ & $w_1$\\ \hline $w_2$ & $w_3$\end{tabular}\\

        \hline
        \hline

        odd with sts & $q_{e_{\!\hat{A}}}$ & \qformtable{lightgray}{lightgray}{lightgray}{lightgray}{lightgray}{pink}{pink}{pink}\\

        \hline

        even with sts & $q_{x_1}$ & \qformtable{lightgray}{lightgray}{lightgray}{lightgray}{lightgray}{pink}{lightgray}{lightgray}\\

        \cline{2-3}

        & $q_{x_2}$ & \qformtable{lightgray}{lightgray}{lightgray}{lightgray}{lightgray}{lightgray}{pink}{lightgray}\\

        \cline{2-3}

        & $q_{x_3}$ & \qformtable{lightgray}{lightgray}{lightgray}{lightgray}{lightgray}{lightgray}{lightgray}{pink}\\

        \hline

        no sts & $q_{y_1}$ & \qformtable{lightgray}{lightgray}{pink}{pink}{lightgray}{pink}{pink}{pink}\\

        \cline{2-3}

        & $q_{x_1+y_1}$ & \qformtable{lightgray}{lightgray}{pink}{pink}{lightgray}{pink}{lightgray}{lightgray}\\

        \cline{2-3}

        & $q_{x_2+y_1}$ & \qformtable{lightgray}{lightgray}{pink}{pink}{lightgray}{lightgray}{pink}{lightgray}\\

        \cline{2-3}

        & $q_{x_3+y_1}$ & \qformtable{lightgray}{lightgray}{pink}{pink}{lightgray}{lightgray}{lightgray}{pink}\\

        \cline{2-3}

        & $q_{y_2}$ & \qformtable{lightgray}{pink}{lightgray}{pink}{lightgray}{pink}{pink}{pink}\\

        \cline{2-3}

        & $q_{x_1+y_2}$ & \qformtable{lightgray}{pink}{lightgray}{pink}{lightgray}{pink}{lightgray}{lightgray}\\

        \cline{2-3}

        & $q_{x_2+y_2}$ & \qformtable{lightgray}{pink}{lightgray}{pink}{lightgray}{lightgray}{pink}{lightgray}\\

        \cline{2-3}

        & $q_{x_3+y_2}$ & \qformtable{lightgray}{pink}{lightgray}{pink}{lightgray}{lightgray}{lightgray}{pink}\\

        \cline{2-3}

        & $q_{y_3}$ & \qformtable{lightgray}{pink}{pink}{lightgray}{lightgray}{pink}{pink}{pink}\\

        \cline{2-3}

        & $q_{x_1+y_3}$ & \qformtable{lightgray}{pink}{pink}{lightgray}{lightgray}{pink}{lightgray}{lightgray}\\

        \cline{2-3}

        & $q_{x_2+y_3}$ & \qformtable{lightgray}{pink}{pink}{lightgray}{lightgray}{lightgray}{pink}{lightgray}\\

        \cline{2-3}

        & $q_{x_3+y_3}$ & \qformtable{lightgray}{pink}{pink}{lightgray}{lightgray}{lightgray}{lightgray}{pink}\\

        \hline

    \end{tabular}
\vspace*{2ex}
    \caption{Quadratic forms on $A[2]$ for $d$ even.}
    \label{tab:qform_labels}
\end{table}

\subsection{Dual polarizations}

Let $A$ be an abelian variety with a polarizing line bundle $L$.
Since $L$ is ample its index (cf.~\cite[\S16]{Mumford}) is $0$, hence the complex $\Phi(L)$ has its only nonzero term in degree~$0$.

\begin{defn}
We define the \emph{dual line bundle} of $L$ on $\hat{A}$ to be $\hat{L}:=\det(\Phi(L))^{-1}$.
\end{defn}

In \cite{BLdual}, Birkenhake and Lange show that 
$\hat{L}$ is polarizing and refer to its N\'eron--Severi class, which we denote by $\hat{l}$, as the \emph{dual polarization} to $l$.

We assemble some useful facts about $\hat{L}$ in the following lemma. 
We prove some further results in this vein in \Cref{oddodd}, using the machinery of our isomorphism of moduli spaces.

\begin{lemma}\label[lemma]{PhiL_lemma}
Let $A$ be an abelian surface and $L$ a polarizing line bundle of $(1,d)$-type.
\begin{enumerate}[{(a)}]
\item $\phi_{\hat{L}}\circ \phi_L=d\cdot \id_A$ and $\phi_L\circ\phi_{\hat{L}}=d\cdot \id_{\hat{A}}$, which uniquely characterizes $\hat{l}$.
\item $\det(\Phi(L^{-1}))$ is inverse to $\det(\Phi(L))$.
\item The map $\Pic^l(A)\to \Pic^{\hat{l}}(\hat{A})$, 
$N\mapsto \hat{N}$, is $\phi_{\hat{L}}$-equivariant. 
For each $x\in \hat{A}$,
$(L\otimes P_x)^{\wedge}\simeq \hat{L}\otimes\phi_{\hat{L}}(x)$.
\item If $L$ is symmetric, then $\hat{L}$ is symmetric. 
\end{enumerate}
\end{lemma}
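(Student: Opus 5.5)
We prove the four parts in order; each uses only \Cref{FMfacts} and standard facts about determinants and Mukai vectors. Throughout, $\Phi(L)$ is a vector bundle $E$ of rank $h^0(L)=d$ sitting in degree $0$, since $L$ has index $0$.

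For (a), start from $\Phi(t_x^*L)=P_{-x}\otimes\Phi(L)$ (\Cref{FMfacts}). Write $t_x^*L\simeq L\otimes P_{\phi_L(x)}$ and use $\Phi(L\otimes P_y)=t_y^*\Phi(L)$. This gives $t_{\phi_L(x)}^*E\simeq P_{-x}\otimes E$. Taking determinants yields
\[
t_{\phi_L(x)}^*\det E\simeq P_{-x}^{\otimes d}\otimes \det E,
\]
that is, $\phi_{\det E}(\phi_L(x))=-dx$ under the identification $\hat{\hat A}=A$. Since $\hat L=(\det E)^{-1}$, this gives $\phi_{\hat L}\circ\phi_L=d\cdot\id_A$. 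Sign conventions need care here, but the sign is absorbed by the inverse in the definition of $\hat L$. Because $\phi_L$ is an isogeny, $\phi_{\hat L}$ is then determined as $d\cdot\phi_L^{-1}$ in $\mathrm{Hom}(\hat A,A)\otimes\mathbb Q$. Composing on the other side gives $\phi_L\circ\phi_{\hat L}\circ\phi_L=d\,\phi_L$, and cancelling the surjection $\phi_L$ yields $\phi_L\circ\phi_{\hat L}=d\cdot\id_{\hat A}$. Since $\phi$ determines the Néron--Severi class, $\hat l$ is uniquely characterized.

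For (b), $L^{-1}$ has index $g=2$, so $\Phi(L^{-1})=F[-2]$ for a vector bundle $F$. Grothendieck--Verdier duality for $\Phi$ gives $\Phi(L^\vee)\simeq (\iota^*\Phi(L))^\vee[-g]$; this is Mukai's compatibility of $\Phi$ with dualizing, with kernel $P^\vee=(1\times\iota)^*P$. Hence $\det F\simeq\iota^*(\det E)^{-1}$, and the shift by $2$ does not change the sign of the determinant. It then remains to check $\iota^*\det E\simeq\det E$. This is not automatic when $L$ is not symmetric; it follows from (c) once $\iota^*$ is tracked, or directly by comparing $\Phi(\iota^*L)=\iota^*\Phi(L)$ with translation. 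The identification of $\det$ on the derived object as the alternating product is what makes the statement meaningful.

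For (c), apply $\Phi(L\otimes P_x)=t_x^*E$ and take determinants: $(L\otimes P_x)^\wedge=t_x^*\hat L\simeq\hat L\otimes P_{\phi_{\hat L}(x)}$. This is exactly equivariance with respect to the $\Pic^0$-actions via $\phi_{\hat L}$.

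For (d), if $\iota^*L\simeq L$, then $\Phi(\iota^*L)=\iota^*\Phi(L)$, because the Poincaré bundle satisfies $(\iota\times\iota)^*P\simeq P$. Taking determinants gives $\iota^*\hat L\simeq\hat L$.

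The main obstacle is (b), namely pinning down the duality isomorphism $\Phi(\mathcal F^\vee)\simeq\iota^*\Phi(\mathcal F)^\vee[-g]$ with correct signs and shifts.
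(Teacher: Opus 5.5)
The gap is in the last step of (b), and it cannot be closed: as stated, (b) is false for non-symmetric $L$. Your duality computation is correct and gives $\det\Phi(L^{-1})\simeq\iota^*(\det E)^{-1}=\iota^*\hat L$. The problem is the claim that $\iota^*\hat L\simeq\hat L$ ``follows from (c) once $\iota^*$ is tracked.''

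Track it explicitly. Write $L\simeq L_0\otimes P_x$ with $L_0$ symmetric and $x\in\hat A$. By (c) and (d), $\hat L\simeq\hat L_0\otimes P_{\phi_{\hat L}(x)}$, while $\iota^*\hat L\simeq\hat L_0\otimes P_{-\phi_{\hat L}(x)}$. So $\iota^*\hat L\simeq\hat L$ if and only if $2\phi_{\hat L}(x)=0$. For instance, if $L=t_a^*L_0$, then $x=\phi_L(a)$ and by (a) the condition becomes $2da=0$. Since $\phi_{\hat L}$ is an isogeny, this fails for general $x$, and your own formula then shows that (b) fails. Hence (b) holds exactly when $\hat L$ is symmetric.

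The other parts are fine. Your (c) and (d) are the paper's arguments. Your (a) is a correct, self-contained replacement for the paper's citation of Birkenhake--Lange: taking determinants in $t_{\phi_L(x)}^*E\simeq P_{-x}\otimes E$ with $\operatorname{rk}E=d$ gives $\phi_{\hat L}(\phi_L(x))=dx$ with the right sign, and cancelling the surjection $\phi_L$ gives the other composition.

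The fix is to state (b) for symmetric $L$, which is the setting of the later sections, where $L$ is chosen symmetric. Under that hypothesis your route is complete and shorter than the paper's: prove (d) first, then $\det\Phi(L^{-1})\simeq\iota^*\hat L\simeq\hat L=\det\Phi(L)^{-1}$.

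The paper argues differently. It writes $L\simeq\pi^*P$ for an isogeny $\pi\colon A\to B$ to a principally polarized $(B,P)$ and uses Mukai's $\Phi(L)\simeq\tau_*P^{-1}$ and $\Phi(L^{-1})\simeq\tau_*P[-2]$. It then compares determinants via $\det\tau_*P\simeq N_\tau(P)\otimes\det\tau_*\O_B$, with $\det\tau_*\O_B$ of order dividing $2$.

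That argument carries the same tacit hypothesis. Apply your duality formula to $\Phi(L)\simeq\tau_*P^{-1}$, using that $\tau$ is étale, so $(\tau_*P^{-1})^\vee\simeq\tau_*P$. In general this gives $\Phi(L^{-1})\simeq\iota^*\tau_*P[-2]=\tau_*\iota^*P[-2]$. So the paper's computation really proves $\det\Phi(L^{-1})\simeq\iota^*\det\Phi(L)^{-1}$, which is exactly your formula. Dropping the $\iota^*$ needs a symmetry assumption, for example $P$ symmetric, which forces $L=\pi^*P$ to be symmetric.
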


\begin{proof}
(a) is proven in condition $(1)$ of the main theorem of \cite{BLdual}.

(b) Since $L$ is a $(1,d)$-polarization, there is an isogeny $\pi:A\to B$ 
of degree $d$ 
to a principally polarized abelian surface $(B,P)$ so that $\pi^*P\simeq L$ and $\pi$ factors through $\phi_L$. 
Let $\tau$ be the isogeny so that $\tau\circ\pi=\phi_L$. By \cite[Prop.~3.11(4)]{Mukai}, $\Phi(L)\simeq \tau_*P^{-1}$. 
Since $L^{-1}$ is anti-ample, its index is $2=\dim(A)$. Thus, the only nonzero term in the complex $\Phi(L^{-1})$ is in degree $2$ and, again by \cite[Prop.~3.11(4)]{Mukai}, it is isomorphic to 
$\tau_*P$. 

Defining the norm map $N_{\tau}:\Pic(B)\to \Pic(\hat{A})$ as in 
\cite[Tag~0BCX]{stacks-project}, by Grothendieck--Riemann--Roch we have the following formula:
$$\det(\tau_*P)\simeq N_{\tau}(P)\otimes \det(\tau_*\O_B).$$

By \cite[II\S7, p.~72]{Mumford}, the pushforward of the structure sheaf along an isogeny is a sum of line bundles indexed by the character group of the kernel of an isogeny.
So, letting $K:=\ker{\tau}$, we have:
$$\tau_*\O_B\simeq \oplus_{\alpha\in\widehat{K}} L_{\alpha}.$$
Since $\pi$ has degree $d$, the kernel of $\pi$ is Lagrangian 
(i.e.\ maximal isotropic)
with respect to the Weil pairing on $A$ given by $L$, and hence isomorphic to its character group. 
Because $B$ is principally polarized, there is an isomorphism between $K$ and $\ker{\hat{\pi}}$, so $K$ inherits this favorable property, and thus $\widehat{K}\simeq K$. 
This means the group structure of $K$ is compatible with that of $\Pic(\hat{A})$, and 
so $\det(\tau_*\O_B)$ is isomorphic to the line bundle $L_{\beta}$ corresponding to $\sum_{\alpha\in\widehat{K}} \alpha$, which must be $2$-torsion, so 
$L_{\beta}=L_{\beta}^{-1}$.

We now have 
$\det(\tau_*P)\simeq N_{\tau}(P)\otimes L_{\beta}$, 
and similarly $\det(\tau_*P^{-1})\simeq N_{\tau}(P^{-1})\otimes L_{\beta}$. 
Because $N_{\tau}$ is a group homomorphism, 
$N_{\tau}(P^{-1})=N_{\tau}(P)^{-1}$, and we have 
$\det(\tau_*P^{-1})=\det(\tau_*P)^{-1}$.

(c) Let $x\in \hat{A}$. By \eqref{linebundleskyscraper} we have:
$
\det(\Phi(L\otimes P_x))^{-1}\simeq \det(t_x^*\Phi(L))^{-1}
\simeq t_x^*\hat{L}\simeq \phi_{\hat{L}}(x)\otimes \hat{L}$.

(d) By \eqref{compose}, $\Phi$ commutes with $\iota^*$.
\end{proof}

\section{Definition of $\Psi$ and proof of \Cref{structure_thm_intro}}\label{sec.moduli}

In \Cref{sec.Psi}, we work through the background needed to  define the isomorphism $\Psi:\Hilb^d_A\times\hat{A}\to M_{\hat{A}}(0,\hat{l},-1)$ and its restriction to Albanese fibers. In \Cref{invariance}, we will give some general properties of $\Psi$, including the proof of \Cref{structure_thm_intro}.

\subsection{Definition of the isomorphism $\Psi$ and restriction to Kummer fibers}\label{sec.Psi}
\subsubsection{Moduli of stable sheaves and definition of $\Psi$}
Let $A$ be an abelian surface with a polarization $H$ and a Mukai vector $v=(r,l,s)$.
The moduli space $M_A(v)$ parametrizes
$H$-stable sheaves on $A$ with Mukai vector $v$, that is, rank $r$, N\'eron-Severi class of the determinantal line bundle $l$, and Euler characteristic $s$.
If $v$  is
primitive and positive where $v^2:=l^2-2rs\geq 0$ and $H$ is $v$-generic, then
the moduli space of stable sheaves $M_A(v)$ is a non-empty, smooth, projective and irreducible variety of dimension $v^2+2$.
We refer the interested reader to the papers \cite{Mukaisymp} and \cite{Yoshioka}, or the summary in \cite{Frei_Honigs} for more details on this construction and hypotheses.

Given a 
$(1,d)$-polarizing line bundle $L$ on $A$ so that $d>1$ and $\NS(A)=\Z l$,
by work of Yoshioka \cite[Prop.~3.5]{Yoshioka} (see also \cite{Gulbrandsen}),
there is an isomorphism
$\Psi:\Hilb^d_A\times \hat{A}\xra{\sim} M_{\hat{A}}(0,\hat{l},-1)$, which is given by the following composition:
\begin{align}
  \Psi:  \Hilb^d_A\times \hat{A} &\xra{\sim} M_A(1,0,-d) \xra{\sim} M_A(1,l,0)  \xra{\sim}
                M_{\hat{A}}(0,\hat{l},-1)
\\\notag                                                          
  (Z,M)&\rlap{${}\mapsto I_{Z}\otimes M$}
       \phantom{\xra{\sim} K_A(1,0,-d)}\,\,
       \llap{$\fF$}\rlap{${}\mapsto \fF\otimes L$}
       \phantom{\xra{\sim} K_A(1,l,0)}\,\,\,\,\,\,
\llap{$\gG$}{}\mapsto \Phi(\gG)[1] 
\end{align}
The leftmost isomorphism sends a subscheme of length $d$ and a line bundle to the ideal sheaf of the subscheme tensored with the line bundle.
Tensoring with the line bundle $L$ gives the middle isomorphism.
The morphism on the right is given by the action of the Fourier--Mukai transform~$\Phi$. The images
under $\Phi$
are, \textit{a priori}, complexes of sheaves, but they are 
supported only in index $1$, so we shift the indexing to give a well-defined map to sheaves.

\subsubsection{Restriction of $\Psi$ to Kummer fibers}
The variety $K_A(v)$ is defined to be a fiber of the Albanese morphism on $M_A(v)$ and is of Kummer type. All such fibers are isomorphic, but
$\iota$ acts differently on these fibers, for instance having different distributions of fixed points, so we must track them carefully.
In this section we will 
show that $\Psi$ restricts to an isomorphism on 
Albanese fibers and
discuss how $\Psi$ interacts with these fibers.

When $v^2 \geq 2$,
the Albanese variety of a moduli space $M_A(v)$ 
is $A\times \hat{A}$ \cite[Theorem~0.1]{Yoshioka}. 
For convenience, we will use the morphism to the Albanese torsor:
\begin{align}\label{alb}
\alb: M_A(v)&\to \Pic^l(A)\times \Pic^{-\hat{l}}({\hat{A}})
\\ \notag
\fF&\mapsto (\det(\fF),\det(\Phi(\fF))).
\end{align}

In the following proposition, we will show that 
$\Psi$ respects the fibers of the Albanese morphisms by examining how the Albanese acts on its domain and codomain.

\begin{lemma}\label[lemma]{albhilb}
\begin{enumerate}[(a)]
\item   The Albanese morphism \eqref{alb}
  acts as follows on $\Hilb^d_A\times \hat{A}$:
\begin{align*}
\alb:\Hilb^d_A\times \hat{A}&\to \Pic^0(A)\times\Pic^0(\hat{A})\\
(Z,M)  &\mapsto (M, -\Sigma Z)
\end{align*}    
where $\Sigma Z$ is given by summing $Z$ in the group law of $A$; if a subscheme has length $n$ and its support is one point $p\in A$, then its sum is $n\cdot p$.
\item The Albanese morphism \eqref{alb}
acts as follows on the image of $(Z,M)\in \Hilb^d_A\times \hat{A}$ in 
$M_{\dA}(0,\hat{l},-1)$:
\begin{align*}
\alb: M_{\hat{A}}(0,\hat{l},-1)&\to \Pic^{\hat{l}}(\dA)\times\Pic^{-l}(A)\\
  \Psi(Z,M)  &\mapsto (t_m^*\hat{L} \otimes P_{\Sigma Z}, M\otimes\iota^*L^{-1}),
\end{align*}
where $m\in \hat{A}$ is the point corresponding to the line bundle $M\in \Pic^0(A)$.
\end{enumerate}
\end{lemma}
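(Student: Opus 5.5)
The plan is to compute both Albanese maps directly from the definition \eqref{alb}. We use two standard facts about determinants of complexes: $\det$ is multiplicative in exact triangles, and $\det(\fF[1])\simeq\det(\fF)^{-1}$. We feed in the formulas of \Cref{FMfacts}. Throughout, $M=P_m$ for the point $m\in\hat{A}$, and we use $\iota^*P_m\simeq P_m^{-1}$.

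For (a), the image of $(Z,M)$ in $M_A(1,0,-d)$ is $I_Z\otimes M$, and $\det(I_Z\otimes M)\simeq M$ because $\O_Z$ is supported in codimension $2$. For the second coordinate we apply $\Phi$ to the triangle coming from
\[
0\to I_Z\otimes M\to M\to \O_Z\otimes M\to 0.
\]
By \eqref{linebundleskyscraper}, $\Phi(M)\simeq k(-m)[-2]$, a skyscraper, and its determinant is trivial. For $\Phi(\O_Z\otimes M)$, we filter $\O_Z$ by subsheaves with successive quotients skyscrapers $k(p)$. Each point $p$ appears with multiplicity equal to the length of $Z$ at $p$. Since $\O_Z\otimes M\simeq\O_Z$ (as $M$ is trivial near the finite support of $Z$), we may drop $M$. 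By \eqref{linebundleskyscraper}, $\Phi(k(p))\simeq P_p$, so multiplicativity gives
\[
\det\Phi(\O_Z\otimes M)\simeq \bigotimes_p P_p^{\otimes \mathrm{length}_p Z}\simeq P_{\Sigma Z}.
\]
Hence $\det\Phi(I_Z\otimes M)\simeq P_{\Sigma Z}^{-1}=P_{-\Sigma Z}$, which proves (a).

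For (b), $\Psi(Z,M)=\Phi(I_Z\otimes M\otimes L)[1]$.

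First coordinate. Its determinant is $\det\Phi(I_Z\otimes M\otimes L)^{-1}$. We compute this using the sequence $0\to I_Z\otimes M\otimes L\to M\otimes L\to \O_Z\to0$. By \eqref{tensortranslate}, $\Phi(L\otimes P_m)\simeq t_m^*\Phi(L)$, so its determinant is $t_m^*\hat{L}^{-1}$. The torsion term again contributes $P_{\Sigma Z}$ by the same filtration argument. Therefore
\[
\det\Phi(I_Z\otimes M\otimes L)\simeq t_m^*\hat{L}^{-1}\otimes P_{-\Sigma Z},
\]
and inverting gives the first coordinate $t_m^*\hat{L}\otimes P_{\Sigma Z}$.

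Second coordinate. We use \eqref{compose}:
\[
\Phi(\Psi(Z,M))\simeq \Phi\Phi(I_Z\otimes M\otimes L)[1]\simeq \iota^*(I_Z\otimes M\otimes L)[-1].
\]
Its determinant is therefore $\det(\iota^*(I_Z\otimes M\otimes L))^{-1}=\iota^*(M\otimes L)^{-1}$. Since $\iota^*M^{-1}\simeq M$, this equals $M\otimes\iota^*L^{-1}$, as claimed.

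The main point needing care is the determinant of $\Phi(\O_Z)$ for non-reduced $Z$. This is handled by the filtration argument together with additivity of $\det$, and it is exactly what produces the sum $\Sigma Z$ counted with multiplicity. A secondary point of care is bookkeeping the signs: the inversion from each odd shift, and the $[-2]$ in \eqref{compose} (an even shift, so it does not invert) versus the $[1]$ in the definition of $\Psi$.
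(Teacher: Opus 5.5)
Your proposal is correct and follows essentially the same route as the paper's proof. You use the same ideal-sheaf sequences with torsion quotient $\O_Z$, additivity of $\det$ with a length filtration to get $P_{\Sigma Z}$, \eqref{tensortranslate} for the first coordinate of (b), and \eqref{compose} for the second. The only cosmetic difference is that in (a) you twist the sequence by $M$ directly, whereas the paper first computes $\det\Phi(I_Z)$ and then uses translation-invariance of $\Pic^0$.
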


The above proposition implies that the map $\Psi$ respects the fibers of the Albanese morphism because the image of $\Psi(Z,M)$ under the Albanese morphism only depends on the image of $(Z,M)$ under the Albanese morphism.

\begin{proof}
(a) We use the identification $\Hilb^d_A\times \hat{A}\simeq M(1,0,-d)$ where $(Z,M)$ maps to $I_Z\otimes M$.
Let $I_Z$ be the ideal sheaf of $Z$.
We consider the short exact sequence
\begin{equation}\label{idealses}
0\to I_Z\to \O_A \to \O_A/I_Z\to 0.
\end{equation}
Thus, $\det(I_Z)\otimes \det(\O_A/I_Z)\simeq \det(\O_A)$. 
Since the support of $\O_A/I_Z$ is finite,
$\det(\O_A/I_Z)$ is trivial, and hence $\det(I_Z)$ is trivial. Thus $\det(I_Z\otimes M)\simeq M$.

Furthermore, we have $\det(\Phi(I_Z))\otimes \det(\Phi(\O_A/I_Z))\simeq \det(\Phi(\O_A))$.
By \eqref{linebundleskyscraper}, $\Phi(\O_A)$ is a shifted skyscraper sheaf and hence its determinant is trivial.

By \eqref{linebundleskyscraper}, for any $x\in A$, $\det(\Phi(k(x)))=P_x$. We may argue by induction on length that for any length $n$ skyscraper sheaf $\fF$ whose support is $x$, $\det(\Phi(\fF))\simeq P_x^{\otimes n}$: for instance if $\fF$ has length~$2$, it fits into a short exact sequence with $k(x)$ at both ends. Extending these ideas, we may write 
$\det(\Phi(\O_A/I_Z))$ as $\Sigma Z$ in $A\simeq\Pic^0(\hat{A})$.
Thus $\det(\Phi(I_Z))$ is $-\Sigma Z$.

Using \eqref{tensortranslate}, we have $\det(\Phi(I_Z\otimes M))\simeq 
t_m^*\det(\Phi(I_Z))$, where $m\in \hat{A}$ corresponds to $M$.
Since $\det(\Phi(I_Z))\in \Pic^0(\hat{A})$, it is invariant under translation, and hence $\det(\Phi(I_Z\otimes M))\simeq\det(\Phi(I_Z))$. 

(b) Let $(Z,M)\in \Hilb^d_A\times \hat{A}$. We use \Cref{FMfacts} to evaluate the Albanese map at $\Psi(Z,M)$:
\begin{align*}
  \det(\Psi(Z,M))&=\det(\Phi(I_Z\otimes M\otimes L)[1])\\
&\simeq t_m^*\det(\Phi(I_Z\otimes L))^{-1}
\end{align*}
Tensoring the short exact sequence \eqref{idealses} with $L$
and noting $\O_A/I_Z\otimes L\simeq \O_A/I_Z$, we have:
\[
\det(\Phi(L))\simeq\det(\Phi(I_Z\otimes L)\otimes\det(\Phi(\O_A/I_Z)).
\]
As shown in the proof of part (a), $\det(\Phi(\O_A/I_Z))\simeq P_{\Sigma Z}$. Thus:
\[
  \det(\Psi(Z,M))\simeq
                   t_m^*(\det(\Phi(L))\otimes P_{-\Sigma Z} )^{-1}
\simeq
t_m^*(\det(\Phi(L))^{-1} \otimes P_{\Sigma Z}
\simeq t_m^*\hat{L} \otimes P_{\Sigma Z}
  .\]
For the other term, we have:
\begin{align*}
\det(\Phi\circ \Psi(Z,M))&=\det(\Phi\circ\Phi(I_Z\otimes M\otimes L)[1])\\
                         &\simeq\det(\iota^*(I_Z\otimes M\otimes L)[-1])\\
&\simeq\iota^*(M\otimes L))^{-1}\simeq M\otimes \iota^*L^{-1}.\qedhere
\end{align*}
\end{proof}  

\begin{rmk}
 We define $K_{d-1}A\simeq K_A(1,0,-d)$ to be the Albanese fiber over $(\O_A,\O_{\hat{A}})$. Its image under $\Psi$ is the Albanese fiber of $M_{\hat{A}}(0,\hat{l},-1)$ over $(\hat{L},\iota^*L^{-1})$. We refer to this preferred fiber as $K_{\hat{A}}(0,\hat{l},-1)$. 
\end{rmk}

The moduli space $M_{\hat{A}}(0,\hat{l},-1)$
parametrizes (pushforwards to $\hat{A}$ of) rank $1$ torsion-free sheaves
with Euler characteristic $-1$
on curves in $\hat{A}$ that define a linear system with N\'eron--Severi class $\hat{l}$.
We now analyze its Albanese fibers in those terms.

Given an element $\fF\in M_{\hat{A}}(0,\hat{l},-1)$, the first term of $\alb(\fF)$ gives the linear system containing the supporting curve of $\fF$. 
Thus,
the preimage of the Albanese morphism \eqref{alb}
on $M_{\hat{A}}(0,\hat{l},-1)$  over
$\{\hat{L}\}\times \Pic^{-{l}}(A)$
is the relative compactified Jacobian
$\smash{\overline{\mathrm{Pic}}^{-1}_{\mathcal{C}/|\hat{L}|}}$, 
where $\mathcal{C}$ is the tautological family of curves in~$|\hat{L}|$
and we index by Euler characteristic. The elements of the compactified Jacobian are rank $1$ torsion-free sheaves supported on curves in the indicated linear system.

We now examine the significance of the second term of $\alb(\fF)$. Note that a rank one torsion-free sheaf on a curve with Gorenstein singularities can always be written as $I_Z\otimes\O_C(D)$ where $I_Z\subseteq\O_C$ is an ideal sheaf of a finite subscheme of $C$
and $D$ is an effective Cartier divisor. 
We also write $D$ for the subscheme with ideal sheaf $\O_C(-D)$.
The following lemma shows that the second term 
of the Albanese map is
the sum $\Sigma D -\Sigma Z$ where each point in $D$ and in $Z$ is multiplied by the length of $\O_C(D)$ or $I_Z$ at that point, respectively.

\begin{lemma}\label[lemma]{detPhi}
Let $C$ be a curve in the linear system $|\hat{L}|$ 
on $\hat{A}$
and let $a\colon C\to\hat{A}$ be the inclusion map. Consider the rank-one torsion free sheaf $\fF=I_Z\otimes\O_C(D)$ supported on $C$, where $I_Z\subseteq\O_C$ is an ideal sheaf of a finite length subscheme $Z\subset C$ and $D$ is an effective Cartier divisor. Then
  \[\det(\Phi(a_*\fF))\simeq \det(\Phi(\hat{L}))\otimes P_w\]
where $w=\sum D-\sum Z$ is the point in $\hat{A}$ given by summing under the group law. In particular, $\det(\Phi(a_*\O_C))\simeq \det(\Phi(\hat{L}))$.
\end{lemma}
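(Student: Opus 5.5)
The plan is to reduce everything to determinants of Fourier--Mukai transforms of sheaves with finite support, via short exact sequences and the multiplicativity of $\det$ in distinguished triangles. That computation is already done in the proof of \Cref{albhilb}(a): for a finite-length sheaf $\mathcal{Q}$ on $\hat{A}$, $\det(\Phi(\mathcal{Q}))\simeq P_{\Sigma\mathcal{Q}}$, where $\Sigma\mathcal{Q}$ is the sum of the support points weighted by length. Here $\Phi$ denotes the transform from $\hat{A}$ to $A$. Since $C$ is an effective Cartier divisor in the linear system $|\hat{L}|$, we have $\O_{\hat{A}}(-C)\simeq\hat{L}^{-1}$, which gives the exact sequence
\[
0\to \hat{L}^{-1}\to \O_{\hat{A}}\to a_*\O_C\to 0 .
\]
By \eqref{linebundleskyscraper}, $\Phi(\O_{\hat{A}})$ is a shifted skyscraper, so its determinant is trivial. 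Hence $\det(\Phi(a_*\O_C))\simeq \det(\Phi(\hat{L}^{-1}))^{-1}$, and \Cref{PhiL_lemma}(b), applied on $\hat{A}$ to $\hat{L}$, identifies this with $\det(\Phi(\hat{L}))$. That proves the ``in particular'' statement. I should check that \Cref{PhiL_lemma}(b) applies to $\hat{L}$: it is a $(1,d)$-polarization on $\hat{A}$ by \Cref{PhiL_lemma}(a), so it does.

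Next I pass from $\O_C$ to $\O_C(D)$. Since $D$ is an effective Cartier divisor on $C$, there is an exact sequence
\[
0\to\O_C\to\O_C(D)\to\O_C(D)|_D\to 0,
\]
whose cokernel has finite support. Its length at each point equals the length of $\O_D$ there, so its weighted sum is $\Sigma D$. Pushing forward by $a$ and applying $\Phi$ gives $\det(\Phi(a_*\O_C(D)))\simeq\det(\Phi(\hat{L}))\otimes P_{\Sigma D}$.

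Then I pass to $I_Z\otimes\O_C(D)$. Tensoring $0\to I_Z\to\O_C\to\O_Z\to 0$ with the line bundle $\O_C(D)$ preserves exactness, and $\O_Z\otimes\O_C(D)\simeq\O_Z$ has weighted sum $\Sigma Z$. This gives $\det(\Phi(a_*\fF))\otimes P_{\Sigma Z}\simeq \det(\Phi(\hat{L}))\otimes P_{\Sigma D}$, which yields the formula with $w=\Sigma D-\Sigma Z$. The group structure on the $P_x$ is additive, so $P_{\Sigma D}\otimes P_{\Sigma Z}^{-1}=P_{w}$.

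The main point to get right is the bookkeeping of signs and which transform is used, the one from $\hat{A}$ to $A$ with $\hat{\hat{A}}=A$. In particular, $\Phi(k(x))=P_x$ must give $P_x$ and not $P_{-x}$ in this direction. I also need the induction-on-length argument for non-reduced finite sheaves, which works exactly as in \Cref{albhilb}. The use of \Cref{PhiL_lemma}(b) for $\hat{L}$ is the step that needs the most care, since that lemma was stated for $A$ and $L$. It transfers because $\hat{A}$ with $\hat{L}$ satisfies the same hypotheses.
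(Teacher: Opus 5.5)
Your proof is correct and has the same three-step structure as the paper's proof. You treat $\O_C$ via the ideal-sheaf sequence of $C$ together with \Cref{PhiL_lemma}(b), then pass to $\O_C(D)$, then to $I_Z\otimes\O_C(D)$, each time using multiplicativity of $\det$ and $\det\Phi(k(x))\simeq P_x$. The paper instead inducts on the degree of $D$ (dualizing $0\to\O_C(-D)\to\O_C\to k(p)\to 0$ and using that $C$ is Gorenstein) and on the length of $Z$. Your one-step use of the finite-length cokernels $\O_C(D)|_D$ and $\O_Z$ is slightly cleaner, and it avoids needing intermediate Cartier divisors when $D$ is supported at a singular point of $C$.
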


\begin{proof}
We begin with the case where $\fF=\O_C$. There is a short exact sequence for the ideal sheaf of any such curve $C$: 
\[
0\to I_C\to \O_{\hat{A}} \to a_*\O_C \to 0.
\]
Noting that $I_C\simeq \hat{L}^{-1}$, we conclude by applying $\det(\Phi(-))$ to the short exact sequence:
\[
\det(\Phi(a_*\O_C))\simeq \det(\Phi(\hat{L}^{-1}))^{-1}.
\]
By \Cref{PhiL_lemma}(b),
$\det(\Phi(\hat{L}^{-1}))^{-1}\simeq \det(\Phi(\hat{L}))$.

Now consider $\fF=\O_C(D)$ for $D$ an effective Cartier divisor. We proceed by induction on the degree of $\O_C(D)$. Suppose $\O_C(D)$ has degree $1$ so that there is a short exact sequence
\[0\to \O_C(-D)\to\O_C\to k(p)\to0\]
for some closed point $p\in C$. Applying $\sHom(-,\O_C)$ we get the short exact sequence 
\[
0\to \O_C\to \O_C(D)\to \sExt^1(k(p),\O_C)\to 0.
\]
We have $\sExt^1(k(p),\O_C)\simeq k(p)$, as it is supported only at $p$ and $C$ is Gorenstein. Applying $\det(\Phi(a_*(-)))$ to the short exact sequence, we find that
$\det(\Phi(a_*(\O_C(D)))\simeq
\det(\Phi(\hat{L}))
\otimes P_{p}$. Now suppose $\O_C(D)$ is degree $d$. Then there is some $D'$ of degree $d-1$ such that there is a short exact sequence 
\[0\to\O_C(D')\to\O_C(D)\to k(q)\to0\]
for some closed point $q\in C$. Applying $\det(\Phi(a_*(-)))$ and invoking the inductive hypothesis gives $\det(\Phi(a_*\O_C(D)))\cong \det(\Phi(\hat{L}))\otimes P_{\sum D}$.

Now consider $\fF=I_Z\otimes\O_C(D)$. We proceed by induction on the length of $Z$, so suppose $Z$ is a single closed point $p\in C$. Consider the short exact sequence
\[0\to I_Z\otimes\O_C(D)\to\O_C(D)\to k(p)\otimes\O_C(D)\to0.\]
Applying $\det(\Phi(a_*(-)))$ and noting that $k(p)\otimes\O_C(D)\cong k(p)$ gives $\det(\Phi(a_*(I_Z\otimes\O_C(D))))\cong \det(\Phi(\hat{L}))\otimes P_{\sum D-p}$. Now suppose $Z$ has length $d$ and consider $Z'$ of length $d-1$ so that $I_{Z'}/I_Z\cong k(q)$ for some close point $q\in C$. Then consider the short exact sequence
\[0\to I_Z\otimes\O_C(D)\to I_{Z'}\otimes \O_C(D)\to k(q)\otimes\O_C(D)\to0.\]
Applying $\det(\Phi(a_*(-)))$ and invoking the induction hypothesis gives $\det(\Phi(a_*(I_Z\otimes\O_C(D))))\cong \det(\Phi(\hat{L}))\otimes P_{\sum D-\sum Z}$.    
\end{proof}  

\begin{rmk}\label[remark]{detPhidual}
The statement of \Cref{detPhi} also holds if $D$ isn't effective: $\O_C(D)$ is still a rank one torsion-free sheaf on $C$ and therefore $\O_C(D)\cong \O_C(-D_1)\otimes\O_C(D_2)$ for $D_1$, $D_2$ effective Cartier divisors. Taking this further, if $\fF=I_Z^\vee\otimes\O_C(D)$ where $D$ is any Cartier divisor, then the same argument shows $
  \det(\Phi(a_*\fF))\simeq\det(\Phi(\hat{L}))\otimes P_w$ where $w=\sum D+\sum Z$ simply by considering the exact sequence
  \[0\to I_{Z'}\otimes\O_C(D)\to I_Z\otimes\O_C(D)\to k(p)\otimes O_C(D)\to 0.\]
\end{rmk}

\begin{rmk}
Combining the results of Lemmas~\ref{albhilb} and \ref{detPhi}, we note that the image of $Z\in\Hilb^d_A$ under $\Psi$ is in the Albanese fiber of $M_{\hat{A}}(0,\hat{l},-1)$
over $(\hat{L}\otimes P_{\Sigma Z}, \iota^*L^{-1})$, so the linear system may vary depending on $\Sigma Z$ but the value $w$ is constant for any choice of $Z$.

The preimage of the relative compactified Jacobian
$\smash{\overline{\mathrm{Pic}}^{-1}_{\mathcal{C}/|\hat{L}|}}$ in $\Hilb^d(A)\times \hat{A}$ consists of pairs $(Z,M)$ where $\phi_{\hat{L}}(M)\simeq P_{-\Sigma Z}$.
\end{rmk}

\subsection{Proof of \Cref{structure_thm_intro}}\label{invariance}

We begin by examining how the image of a $d$-point under isomorphism $\Psi$ behaves if we translate the $d$-point.

\begin{lemma}\label[lemma]{lem.trten}
  Let $Z_1$ and $Z_2$ be $d$-points on $A$.
  Let $x\in A$. The $d$-point $Z_2$ is the translation of $Z_1$ by $x$ (i.e.\ $t_x(Z_1)=Z_2$)
  if and only if the following relation is satisfied:
  \begin{equation}\label{translatetensor}
    \Psi(Z_2)\simeq P_x\otimes t_{\phi_L(x)}^*\Psi(Z_1).\end{equation}
\end{lemma}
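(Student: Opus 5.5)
Here $\Psi(Z)$ means $\Psi(Z,\O_A)=\Phi(I_Z\otimes L)[1]$. The plan is a direct computation with the Fourier--Mukai identities of \Cref{FMfacts}, followed by injectivity of $\Psi$ for the converse.

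For the forward direction, suppose $t_x(Z_1)=Z_2$. The ideal sheaf of the translated subscheme is a pullback: with the convention $t_x(a)=a+x$, we have $I_{Z_2}=t_{-x}^*I_{Z_1}$, or $t_x^*I_{Z_1}$ under the opposite convention. The sign convention has to be fixed once and kept consistent with the statement. Next we write
\[
L\simeq t_{-x}^*\bigl(t_x^*L\bigr)\simeq t_{-x}^*\bigl(L\otimes \phi_L(x)\bigr),
\]
so that
\[
I_{Z_2}\otimes L\simeq t_{-x}^*\bigl(I_{Z_1}\otimes L\otimes P_{\phi_L(x)}\bigr),
\]
where $P_{\phi_L(x)}$ denotes the point $\phi_L(x)\in\hat A$ viewed as a line bundle in $\Pic^0(A)$. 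Applying $\Phi$ and using \eqref{tensortranslate} twice gives
\[
\Phi(I_{Z_2}\otimes L)\simeq P_{\pm x}\otimes \Phi\bigl(I_{Z_1}\otimes L\otimes P_{\phi_L(x)}\bigr)\simeq P_{\pm x}\otimes t_{\phi_L(x)}^*\Phi(I_{Z_1}\otimes L).
\]
Shifting by $[1]$ yields \eqref{translatetensor}, provided the signs work out. The main bookkeeping obstacle is exactly this: choosing the convention for $t_x$ acting on subschemes versus $t_x^*$ on sheaves, together with the sign in $\Phi(t_x^*F)=P_{-x}\otimes\Phi(F)$. One of the choices produces $P_x$ and $t^*_{\phi_L(x)}$ simultaneously, since $\phi_L(-x)=-\phi_L(x)$ flips both signs together. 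I would fix the convention by checking the case $Z$ a reduced point.

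For the converse, suppose $\Psi(Z_2)\simeq P_x\otimes t_{\phi_L(x)}^*\Psi(Z_1)$. Set $Z_3:=t_x(Z_1)$. By the forward direction, $\Psi(Z_3)\simeq P_x\otimes t_{\phi_L(x)}^*\Psi(Z_1)\simeq \Psi(Z_2)$. Since $\Psi$ is an isomorphism of moduli spaces (Yoshioka), and in particular injective on $\Hilb^d_A\times\{\O_A\}$, we get $I_{Z_2}\simeq I_{Z_3}$ as sheaves.

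Concretely, this last step can be checked directly. Apply $\Phi$ and \eqref{compose} to recover $I_{Z_2}\otimes L\simeq I_{Z_3}\otimes L$, and then cancel $L$. Ideal sheaves of finite subschemes determine the subscheme, because $\O_A$ is the double dual and the inclusion $I_Z\to\O_A$ is unique up to scalar as $\mathrm{Hom}(I_Z,\O_A)=H^0(\O_A)$. Hence $Z_2=Z_3=t_x(Z_1)$.
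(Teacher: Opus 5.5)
Your proof is correct and follows the paper's argument. The forward direction is the same computation with the Fourier--Mukai translation/tensor identities, and your converse (compare $Z_2$ with $t_x(Z_1)$ via the forward direction, then use that $\Phi$ is an equivalence and that an ideal sheaf determines its subscheme) repackages the paper's direct rearrangement to $t_{-x}^*I_{Z_1}\simeq I_{Z_2}$.

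Your sign hedging is unnecessary. With your stated convention $I_{Z_2}=t_{-x}^*I_{Z_1}$, the identity $\Phi(t_{-x}^*G)\simeq P_x\otimes\Phi(G)$ gives exactly $P_x\otimes t^*_{\phi_L(x)}\Psi(Z_1)$, so no further convention check is needed.
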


\begin{proof}
First, note that $t_x(Z_1)=Z_2$ if and only if $t_{-x}^*I_{Z_1}\simeq I_{Z_2}$.

Now, we assume that $t_x(Z_1)=Z_2$, which implies that 
$t_{-x}^*(I_{Z_1}\otimes L)\simeq I_{Z_2}\otimes t_{-x}^*L
\simeq I_{Z_2}\otimes L\otimes t_{-x}^*L \otimes L^{-1}
$. 
Applying $\Phi$ to the equation and using \eqref{tensortranslate} to simplify, we have:
\[
P_x\otimes\Psi(Z_1)
\simeq
t_{\phi_L(-x)}^*(\Psi(Z_2)),
\]
yielding \eqref{translatetensor}.

Conversely, assume that \eqref{translatetensor} holds. By using the definition of $\Psi$ and \eqref{tensortranslate}, we have:
\[
\Phi(t_x^*(I_{Z_2}\otimes L))\simeq \Phi(I_{Z_1}\otimes L\otimes (t_x^*L\otimes L^{-1})).
\]
Because $\Phi$ is an equivalence, the arguments in the above expression are isomorphic. Rearranging shows that $t_{-x}^*I_{Z_1}\simeq I_{Z_2}$, and we are done.
\end{proof}

\Cref{lem.trten} immediately implies the following consequences:

\begin{cor}[\Cref{structure_thm_intro}(a)]\label[cor]{rmk.trten} 
Let $Z$, $Z'$ be $d$-points on $A$. 
\begin{enumerate}[(a)]
\item If $Z'$ is a translation of $Z$, then the supporting curve $C_{Z'}$ of $\Psi(Z')$ is a translation of $C_{Z}$. 
\item Let $T_{Z}=\{x\in A\mid t_x(Z)=Z\}$
and $G_Z= K(L)\cap T_Z$. 
\begin{enumerate}[(i)]
\item If $x\in G_Z$, then $\Psi(Z)\simeq P_x\otimes \Psi(Z)$ and, by \cite[Lemma~2.1]{Beauville}, the line bundle $P_x$ pulls back trivially to $\Pic^0(\widetilde{C})$.
\item The elements of $\phi_L(T_Z)\simeq T_Z/G_Z$ are endomorphisms of $C_Z$.
\end{enumerate}
\end{enumerate}
\end{cor}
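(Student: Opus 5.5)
Each part follows by specializing \Cref{lem.trten} to $Z_1=Z_2=Z$. The relation \eqref{translatetensor} then reads $\Psi(Z)\simeq P_x\otimes t_{\phi_L(x)}^*\Psi(Z)$ for every $x\in T_Z$. Conversely, if $\Psi(Z)\simeq P_x\otimes t_{\phi_L(x)}^*\Psi(Z)$, then $x\in T_Z$.

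For (a), suppose $t_x(Z)=Z'$. Then $\Psi(Z')\simeq P_x\otimes t_{\phi_L(x)}^*\Psi(Z)$. Tensoring a sheaf with a line bundle does not change its support, so
\[
\Supp\Psi(Z')=\Supp t_{\phi_L(x)}^*\Psi(Z)=t_{-\phi_L(x)}(\Supp\Psi(Z)).
\]
Hence $C_{Z'}$ is the translate of $C_Z$ by $-\phi_L(x)$. The supports here should be read as the Fitting-support curves, and taking the Fitting support commutes with both tensoring by line bundles and pulling back along automorphisms.

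For (b)(i), take $x\in G_Z$. Then $\phi_L(x)=0$, so the relation collapses to $\Psi(Z)\simeq P_x\otimes\Psi(Z)$. Now $\Psi(Z)$ is a rank $1$ torsion-free sheaf on the curve $C_Z$, and we have an isomorphism $\Psi(Z)\otimes P_x|_{C_Z}\simeq\Psi(Z)$. We cite \cite[Lemma~2.1]{Beauville} to conclude that the pullback of $P_x$ to the normalization $\widetilde{C}$ is trivial. The only work is to confirm that Beauville's hypotheses are met: the sheaf should be rank $1$ torsion-free on a reduced curve. When $C_Z$ may be non-reduced, I would restrict to the general case in which $C_Z$ is integral, as is done in the theorem.

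For (b)(ii), note first that $T_Z$ is a group: it is the stabilizer of $Z$ under translation. So $\phi_L$ restricted to $T_Z$ is a homomorphism with kernel $T_Z\cap K(L)=G_Z$, which gives $\phi_L(T_Z)\simeq T_Z/G_Z$. For $x\in T_Z$, part (a) applied with $Z'=Z$ gives $t_{-\phi_L(x)}(C_Z)=C_Z$. Hence translation by $\phi_L(x)$ (equivalently by its negative, since the image is a group) preserves $C_Z$, and so restricts to an automorphism, in particular an endomorphism, of $C_Z$.

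The main obstacle is the interpretation of "support" in (a) and the hypothesis check for Beauville's lemma in (b)(i). The set-theoretic part is routine once the curve is identified with the Fitting support.
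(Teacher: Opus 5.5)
Your proposal is correct and follows the paper's route exactly. The paper gives no separate argument beyond noting that all three parts follow immediately from \Cref{lem.trten}, which is what you do: you take $Z_1=Z$, $Z_2=Z'$ for (a) and $Z_1=Z_2=Z$ for (b). Your hedge about reducedness in (b)(i) is unnecessary, since $A$ has Picard rank $1$ and therefore $\NS(\hat{A})=\Z\hat{l}$ with $\hat{l}$ primitive. This makes every curve in the class $\hat{l}$, in particular $C_Z$, automatically integral, so Beauville's lemma applies directly.
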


We will need the following result from \cite[17.2]{Polishchukbook} in order to prove the remainder of \Cref{structure_thm_intro}, which we note applies to singular curves.

\begin{lemma}[{\cite[Prop.~17.3]{Polishchukbook}}]\label[lemma]{curve.inc}
Let $a:C\to A$ be a nonconstant morphism from a curve to an abelian variety.
Let $a_*:\Pic(C)\to A$ be the morphism sending the formal sum $D=\sum n_i[p_i]$ to the sum
$\sum n_ia(p_i)$
in the group law of $A$.
For any line bundle $N$ on $C$,
the  composition
\begin{equation}
\Pic^0(A)\xra{a^*} \Pic^0(C)\xra{a_*} A
\end{equation}
is equal to the morphism $\phi_{L'}$, where $L':=\det(\Phi(a_*N))$. 
\end{lemma}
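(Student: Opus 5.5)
The plan is to compare the two homomorphisms $\Pic^0(A)=\hat{A}\to A$ pointwise. We use the identification $A\simeq\Pic^0(\hat{A})$, $x\mapsto P_x$, under which $\phi_{L'}(y)=t_y^*L'\otimes L'^{-1}$ for $y\in\hat{A}$. Fix $y\in\hat{A}$. The first step is to express the translate $t_y^*L'$ through the sheaf $a_*N$. By \eqref{tensortranslate} in \Cref{FMfacts}, and since $\det$ commutes with pullback along the isomorphism $t_y$,
\[
t_y^*L'=t_y^*\det(\Phi(a_*N))\simeq\det(\Phi(a_*N\otimes P_y))\simeq\det(\Phi(a_*(N\otimes a^*P_y))),
\]
where the last isomorphism is the projection formula. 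So $\phi_{L'}(y)\simeq\det(\Phi(a_*(N\otimes a^*P_y)))\otimes\det(\Phi(a_*N))^{-1}$. It remains to show that this line bundle equals $P_{a_*(a^*P_y)}$.

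The second step is to compute how $\det\Phi(a_*(-))$ changes when we twist by a degree-$0$ line bundle on $C$. I would write $a^*P_y\simeq\O_C(D_1-D_2)$, where $D_1,D_2$ are effective Cartier divisors of equal degree supported in the smooth locus of $C$. This is possible by taking a sufficiently ample $H$ on $C$ and general sections of $H\otimes a^*P_y$ and of $H$. Then I run the induction of \Cref{detPhi} (and \Cref{detPhidual}) with $N$ in place of $\O_C$. Each time we twist by a smooth point $p$ we get a short exact sequence
\[
0\to N(D')\to N(D'+p)\to k(p)\to0 .
\]
Since $\Phi(k(a(p)))=P_{a(p)}$ by \eqref{linebundleskyscraper}, each such twist multiplies $\det\Phi(a_*(-))$ by $P_{a(p)}$; similarly each removal of a point multiplies it by $P_{a(p)}^{-1}$. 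Hence
\[
\det(\Phi(a_*(N(D_1-D_2))))\simeq\det(\Phi(a_*N))\otimes P_{\Sigma a(D_1)-\Sigma a(D_2)},
\]
and $\Sigma a(D_1)-\Sigma a(D_2)$ is by definition $a_*(a^*P_y)$. Combining with the first step gives $\phi_{L'}(y)=P_{a_*a^*(y)}$, that is, $\phi_{L'}=a_*\circ a^*$.

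The main obstacle I expect is making the map $a_*:\Pic(C)\to A$ and the divisor representation behave well when $C$ is singular or $a$ is not an embedding. We have to check that $a_*$ is well defined on linear equivalence classes, and that the points can be chosen in the smooth locus so that the skyscraper quotients $k(p)$ push forward to $k(a(p))$. If $a$ collapses points this is still fine, since $a_*k(p)=k(a(p))$ as long as $a$ is finite, and a nonconstant map from a curve is finite. Alternatively, both sides are homomorphisms of abelian varieties, so it suffices to verify equality on a dense set of $y$, for which a general divisor representative avoids the singular points. A final check is the sign convention: \eqref{linebundleskyscraper} gives $\Phi(k(x))=P_x$ with no sign, which matches the positive sign in $\phi_{L'}$ as defined.
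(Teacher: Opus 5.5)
Your proof is correct, and it takes a genuinely different route from the paper, which gives no argument of its own here and simply imports the statement from \cite[Prop.~17.3]{Polishchukbook}. You derive it from tools already in the paper. First, \eqref{tensortranslate} and the projection formula turn $\phi_{L'}(y)=t_y^*L'\otimes L'^{-1}$ into $\det\Phi(a_*(N\otimes a^*P_y))\otimes\det\Phi(a_*N)^{-1}$. Then the point-by-point induction of \Cref{detPhi} and \Cref{detPhidual}, driven by $\det\Phi(k(x))=P_x$, identifies this with $P_{a_*a^*(y)}$. The sign comes out exactly as stated and matches \Cref{rmk.curveinc}(a), where $L'$ has class $-\hat{l}$.

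What your route buys is transparency. The paper only notes that the cited result applies to singular curves, whereas in your argument this is evident: all the twisting happens at smooth points. So it covers singular members of $|\hat{L}|$ (where $\Pic^0(C)$ is a generalized Jacobian), as the paper needs in \Cref{rmk.curveinc}.

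It also disposes of the obstacle you flagged yourself. Your formula expresses $P_{\Sigma a(D_1)-\Sigma a(D_2)}$ purely in terms of the isomorphism class of $\O_C(D_1-D_2)$, and $x\mapsto P_x$ is injective. Hence well-definedness of $a_*$ on $\Pic(C)$ is an output of your computation rather than an extra input, and the density argument is not needed.

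The one hypothesis worth stating explicitly is that $a$ is finite, so that $a_*$ is exact and $a_*k(p)=k(a(p))$; i.e.\ no component of $C$ is contracted. In the paper's applications this is automatic, since a curve in a primitive class on a surface of Picard rank $1$ is integral.
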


\begin{rmk}\label[remark]{rmk.curveinc}
(a) Consider the situation where $a:C\to A$ is the immersion of a curve in the linear system $|L|$ or is the composition of such an immersion with the normalization of such a curve. The pushforward of a line bundle on the normalization of a curve  is a rank $1$ torsion-free sheaf on a curve. By \Cref{detPhi} and the definition of the dual line bundle, the N\'eron--Severi class of $\det(\Phi(a_*N))$ in this setting is $-\hat{l}$, and 
$a_*\circ a^*=-\phi_{\hat{L}}$.

(b) As a consequence, for any $P_x\in \Pic^0(A)$, its restriction to $C$ 
is some rank $1$ torsion free sheaf $I_Z\otimes \O_C(D)$. Then $\Sigma D-\Sigma Z=0$ if and only if $x\in K(L)$.
\end{rmk}

\begin{thm}[\Cref{structure_thm_intro}(b),(c)]\label{genus2etc}
Let $(A,L)$ be a general $(1,d)$-polarized abelian surface. Let
$Z$ be a $d$-point on $A$. Let $G_Z\leq K(L)$ be the subgroup of all points in $K(L)$ under which $Z$ is translation invariant, and $C_Z$ be the supporting curve of $\Psi(Z)$ in $\hat{A}$.

\begin{enumerate}[(a)]
\item The geometric genus of $C_Z$ is at most $\frac{d}{|G_Z|}+1$, and is $2$ if and only if $|G_Z|=d$, which is the largest possible value of $|G_Z|$.

\item When the geometric genus of $C_Z$ is greater than or equal to $3$, 
  the group $G_Z$ determines which component of the Severi variety (in the sense of Zahariuc \cite{Zahariuc}) contains $C_Z$.
\end{enumerate}  
\end{thm}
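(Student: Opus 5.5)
Let $\nu:\widetilde{C}\to C_Z\subset\hat{A}$ be the normalization composed with the inclusion, and $\tilde g$ the geometric genus of $C_Z$. For general $(A,L)$ (Picard rank $1$), no curve in $|\hat L|$ lies in an elliptic curve. Hence $\nu$ is nonconstant with image generating $\hat A$. By the universal property of the Jacobian, $\nu$ factors through $J(\widetilde C)\to\hat A$, and this map is surjective. Dually, $\nu^*:\Pic^0(\hat A)=A\to\Pic^0(\widetilde C)$ has finite kernel. By \Cref{rmk.curveinc}(a), the composition $\nu_*\circ\nu^*$ equals $-\phi_{\hat L}$, or the analogous map after identifying $\hat{\hat A}=A$; up to sign it is the polarization map attached to $\hat L$ or $L$. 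Therefore $\ker(\nu^*)\subseteq\ker(\nu_*\circ\nu^*)$, which is a kernel of size $d^2$ (essentially $K(L)$ under our identifications, via \Cref{rmk.curveinc}(b)).

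By \Cref{rmk.trten}(b)(i), $G_Z\subseteq\ker(\nu^*)$, since $P_x$ pulls back trivially to $\Pic^0(\widetilde C)$ for $x\in G_Z$. For part (a), write $\widetilde C$ via the group $H:=\ker(\nu^*)\supseteq G_Z$. The image $B:=\nu^*(A)\subseteq J(\widetilde C)$ is an abelian surface isogenous to $A$. Restricting the principal polarization $\Theta$ of $J(\widetilde C)$ to $B$ gives a polarization. Pulled back to $A$, this polarization is numerically a multiple of the class given by $\nu_*\nu^*$, that is, the class of type $(1,d)$ after dualizing. Since $A$ has Picard rank $1$, this pins down the pullback class as $l$ (or $\hat l$).

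The key numerical step uses the complementary abelian subvariety $B'$ of $B$ in $J(\widetilde C)$, of dimension $\tilde g-2$. Its kernel $B\cap B'$ is isomorphic to $K(\Theta|_B)$. We have $|K(l)|=d^2$, so $|K(\Theta|_B)|=d^2/|H|^2$. Since $\Theta|_B$ has type $(e_1,e_2)$ with $e_1e_2=d/|H|$, and $\Theta|_{B'}$ has the same exponent $e_2\le d/|H|$, we get $\dim B'\le$ a bound in terms of $d/|H|$. I would then use the argument that $B'$ embeds via the complementary theta restriction, whose type $(1,\ldots,1,e_1,e_2)$ forces a bound. This is the step I expect to be the main obstacle: turning the exponent into the bound $\tilde g-2\le d/|G_Z|-1$. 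I would first try the Kani-style degree count $\deg(\widetilde C\to \hat A)$: $\widetilde C\cdot\widetilde C=\hat l^2=2d$, and after the quotient by $H$ this gives an effective arithmetic genus $d/|H|+1$ for the induced curve in $\hat A/H^\perp$. The normalization's genus is then at most this arithmetic genus, so $\tilde g\le d/|H|+1\le d/|G_Z|+1$.

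For the genus-$2$ equivalence, $\tilde g=2$ forces $J(\widetilde C)\to\hat A$ to be an isogeny of degree $d$. Its dual kernel is $H$ of order $d$, which is Lagrangian in $K(L)$. One then shows $H=G_Z$ by using that $\Psi(Z)$ is a line bundle pushforward invariant under $H$, and applying \Cref{lem.trten} backwards to get translation invariance of $Z$ by $H$. Conversely, $|G_Z|=d$ gives $\tilde g\le 2$, hence $\tilde g=2$. The bound $|G_Z|\le d$ follows because $G_Z$ acts freely on the length-$d$ scheme $Z$.

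For part (b), Zahariuc's components of the Severi variety of genus-$\tilde g$ curves in $|\hat L|$ are indexed by the isogeny type of $J(\widetilde C)\to\hat A$, equivalently by the subgroup $\ker(\nu^*)\subseteq K(L)$. When $\tilde g\ge3$, I would show $\ker(\nu^*)=G_Z$ using the same backwards argument via \Cref{lem.trten} and \Cref{rmk.trten}, which identifies the component.
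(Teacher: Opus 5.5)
Your outline follows the paper's proof. You get $G_Z\subseteq H:=\ker(\nu^*)$ from \Cref{rmk.trten} and \cite[Lemma~2.1]{Beauville}, factor $\nu$ through a degree-$|H|$ isogeny onto $\hat A$, bound the genus by the type $(1,d/|H|)$ there, and then use Debarre and Zahariuc. Everything that needs only $G_Z\subseteq H$ is fine:
\begin{itemize}
\item $|G_Z|\le d$;
\item your degree-count bound $\tilde g\le d/|H|+1\le d/|G_Z|+1$, with $C_Z$ lifted to the degree-$|H|$ cover $(A/H)^\wedge\to\hat A$ dual to $A\to A/H$, where its self-intersection is $2d/|H|$;
\item $|G_Z|=d\Rightarrow\tilde g=2$.
\end{itemize}
The gap is the reverse inclusion $H\subseteq G_Z$. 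Both ``$\tilde g=2\Rightarrow |G_Z|=d$'' and your (b) rest on it. You justify it by asserting that $\Psi(Z)$ is the pushforward of a line bundle on $\widetilde C$. That holds only for special $Z$; for those, the projection formula and \Cref{lem.trten} do give $G_Z=H$. In general it fails, because $\Psi$ is onto the moduli space. So $\Psi(Z)$ can be any rank-one torsion-free sheaf on $C_Z$ with the given invariants, for example a line bundle on the singular curve $C_Z$. Then $P_x\otimes\Psi(Z)\cong\Psi(Z)$ requires $P_x|_{C_Z}\cong\O_{C_Z}$, not merely $\nu^*P_x\cong\O_{\widetilde C}$. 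The discrepancy lives in the affine part $\ker(\Pic^0 C_Z\to\Pic^0\widetilde C)$.

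This cannot be patched. The support map $K_{d-1}A\to|\hat L|\cong\P^{d-1}$ is a surjection from a $(2d-2)$-dimensional variety, so its fiber over any $[C]$ has dimension at least $d-1\ge 1$. Suppose $h:=|\ker\nu^*|\ge 2$. The $Z\in K_{d-1}A$ invariant under $\ker\nu^*$ correspond to length-$d/h$ subschemes of $A/\ker\nu^*$ whose sum lies in a finite set. This is a locus of dimension $2d/h-2<d-1$, so a general $Z'$ over $[C]$ has $G_{Z'}\subsetneq\ker\nu^*$.

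Now take $C$ of geometric genus $2$, where $h=d$. For example, take $C=C_Z$ for $Z=z+G$ with $|G|=d$ and $dz+\Sigma G=e_A$. This yields infinitely many $Z'$ with $C_{Z'}$ of geometric genus $2$ but $|G_{Z'}|<d$. So the ``only if'' in (a) is false as stated. In (b), $G_Z$ is in general a proper subgroup of $\ker\nu^*$, and it is $\ker\nu^*$ that actually indexes Zahariuc's component.

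The paper's proof has the same hole. It asserts $\ker(a^*)=\{P_x\mid x\in G_Z\}$, citing \cite[Lemma~2.1]{Beauville} and \Cref{rmk.trten}, but these give only $G_Z\subseteq\ker(a^*)$. What your argument does establish is the version with $\ker\nu^*$ in place of $G_Z$:
\begin{itemize}
\item $\tilde g=2$ if and only if $|\ker\nu^*|=d$;
\item $\ker\nu^*$ determines the component;
\item $G_Z=\ker\nu^*$ whenever $\Psi(Z)$ is pushed forward from $\widetilde C_Z$.
\end{itemize}
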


We introduced the assumption that $(A,L)$ is general in order to use
the results of \cite{Debarre} and \cite{Zahariuc}. 

\begin{proof} First, if $Z$ is invariant under translations by elements of $G_Z$, then $G_Z$ acts freely on the supporting points of $Z$, of which there are at most $d$. Thus $|G_Z|\leq d$.

Let $a:\widetilde{C}_Z\to \hat{A}$ be the map given by composing the normalization of $C_Z$ with the immersion of $C_Z$ into $\hat{A}$.
By \cite[Lemma~2.1]{Beauville} and \Cref{rmk.trten}, 
the kernel of $a^*:\Pic^0(\hat{A})\to J\widetilde{C}_Z$ consists of
$\{P_x\mid x\in G_Z\}\simeq G_Z$.
Since $a^*$ factors as $A\to A/G_Z\to J\widetilde{C}_Z$,
and $A/G_Z\to J\widetilde{C}_Z$ must be injective,
the dual $a_*: J\widetilde{C}_Z \to \hat{A}$ factors sharply through
the isogeny
$A'\xra{\theta} \hat{A}$ 
where $A':=(A/G_Z){\!\hat{\phantom{a}}}$ and $\theta$ is dual to the quotient $A\to A/G_Z$.
Hence, the morphism $a$ also factors sharply though $\theta$.

As discussed in \Cref{sec.Psi}, 
$C_Z\subseteq\hat{A}$, the image of $\widetilde{C}_Z$ in $A$,  is contained in the linear system 
of a $(1,d)$-polarization 
whose N\'eron--Severi class is $\hat{l}$.
By \cite[Prop.~4.3]{BLmoduli}, 
the pullback of the principal polarization on $J\widetilde{C}_Z$ to $A$ has type $(1,d)$ and N\'eron--Severi class $l$. The linear system containing the image of $\widetilde{C}_Z$ in $A'$ is the pushforward of a polarization of $(1,d)$-type, so must have $(1,\frac{d}{|G|})$-type (see \cite[Lemma~1.3]{HonigsSarmah}).
Thus, $\frac{d}{|G|}+1$ is
an upper bound on the genus of $\widetilde{C}_Z$.

In \cite{Debarre}, it is explained that genus $2$ curves $C$ in $|\hat{L}|$
correspond uniquely, up to translation, to isogenies $a_*:J\widetilde{C}\to \hat{A}$ with kernels of order $d$. The number of isomorphism classes of such isogenies is equal to the number of isomorphism classes of isogenies $a^*:A\to J\widetilde{C}$ with kernel of order $d$, via taking the dual.
Thus we have proven the 
``if and only if'' statement of (a) in the case when $C_Z$ has geometric genus $2$.

In \cite{Zahariuc}, the author considers the Severi variety parametrizing curves in a general polarized abelian surface. More specifically, this Severi variety parametrizes unramified maps from smooth curves 
 of a fixed genus $g>2$ 
to the abelian surface where their images have N\'eron--Severi class equal to that of the polarization.
The irreducible components of this Severi variety for the abelian surface $\hat{A}$ are characterized as follows \cite[Theorem~1.1]{Zahariuc}:
if $(\hat{A},\hat{l})$ is a general polarized abelian surface, then 
each component corresponds to a distinct isogeny (up to isomorphism)
$\theta:A'\to \hat{A}$ of degree dividing $d$ that $\phi_L$ factors through. The maps from smooth curves to $\hat{A}$ that are parametrized by this component are those that factor sharply through $\theta$.
By \Cref{curve.inc} and \Cref{rmk.curveinc}, $-\phi_L=a_*\circ a^*$, and so 
the component of the Severi variety that parametrizes $a$ is determined by~$G_Z$.
\end{proof}

\begin{rmk}\label[remark]{nonnodal_Klein}
In \cite[Lemma~2.1]{KLC}, the authors characterize genus $2$ curves 
with a non-nodal singularity in terms of the Weierstrass points of the curve's normalization.
When $C$ is as in \Cref{genus2etc} and $d=4$, 
Knutsen and Lelli-Chiesa's criteria implies 
$C_Z$ is a genus $2$ curve with a non-nodal singularity 
(a triple point in this case)
if and only if $G_Z$ is the unique Klein four-group contained in $K(L)$.
\end{rmk}

\section{Properties of $\Psi(Z)$ for symmetric $Z$}\label{sec.compute}

In this section, we assume that the 
line bundle $L$ defining $\Psi$
is symmetric. 

\subsection{The map $(\star)$}\label{sec.star}
In this section, we show how to use eigenvalues of $\iota^*$ acting on vector spaces associated to $Z$ to analyze $\Psi$, introducing quadratic forms as a tool for these computations, and we prove \Cref{eigen}. 

\begin{lemma}[{cf.~(3.1.3)~{\cite{Gulbrandsen}}}]
Let $Z\in \Hilb^d_A$ and $C_Z$ be the supporting curve of $\Psi(Z)$. 
For any $x\in \hat{A}$, $x\in C_Z$ if and only if the following natural map is not an isomorphism:
\[
H^0(A, L\otimes P_x)\xrightarrow{(\star)} H^0(A,\O_Z\otimes L\otimes P_x).
\]
\end{lemma}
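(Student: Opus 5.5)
The plan is to read off membership in $C_Z$ from the derived fibre of $\Psi(Z)$ at $x$, and to compute that fibre by base change for the Fourier--Mukai transform. Recall that $\Psi(Z)=\Phi(I_Z\otimes L)[1]$, and that $\Phi(I_Z\otimes L)$ has cohomology only in degree $1$. Its supporting curve $C_Z$ is, set-theoretically, the support of the coherent sheaf $\Psi(Z)$ on $\hat{A}$. So it suffices to show that $x\notin \Supp\Psi(Z)$ if and only if $(\star)$ is an isomorphism.

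\textbf{Step 1 (base change).} Let $i_x:\{x\}\hookrightarrow\hat{A}$ be the inclusion of the point. For any $\gG\in D(A)$, flat base change along $i_x$ for the projection $p_2:A\times\hat{A}\to\hat{A}$, together with $P_A|_{A\times\{x\}}\simeq P_x$, gives
\[
Li_x^*\Phi(\gG)\simeq R\Gamma(A,\gG\otimes P_x).
\]
Here we use that $\Phi(\gG)=Rp_{2*}(p_1^*\gG\otimes P_A)$ is computed by a proper flat morphism, so the derived base change theorem applies.

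\textbf{Step 2 (support via Nakayama).} For a coherent sheaf $\fF$ on $\hat{A}$, the point $x$ lies outside $\Supp\fF$ if and only if $\fF\otimes k(x)=0$, by Nakayama's lemma. Moreover, $\fF\otimes k(x)=\mathcal{H}^0(Li_x^*\fF)$ is the top cohomology of the derived fibre, and it vanishes exactly when the whole derived fibre $Li_x^*\fF$ vanishes. Applying this to $\fF=\Psi(Z)$ and combining with Step 1, we get
\[
x\notin C_Z \iff R\Gamma(A,I_Z\otimes L\otimes P_x)=0.
\]

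\textbf{Step 3 (long exact sequence).} Tensor the sequence \eqref{idealses} with $L\otimes P_x$ to obtain
\[
0\to I_Z\otimes L\otimes P_x\to L\otimes P_x\to \O_Z\otimes L\otimes P_x\to 0.
\]
Since $L\otimes P_x$ is ample, it has index $0$, so $H^i(A,L\otimes P_x)=0$ for $i>0$. Also $\O_Z\otimes L\otimes P_x$ has finite support, so its higher cohomology vanishes. The long exact sequence in cohomology therefore gives
\[
H^0(I_Z\otimes L\otimes P_x)=\ker(\star),\qquad H^1(I_Z\otimes L\otimes P_x)=\operatorname{coker}(\star),\qquad H^2(I_Z\otimes L\otimes P_x)=0.
\]
Hence $R\Gamma(A,I_Z\otimes L\otimes P_x)=0$ if and only if $(\star)$ is an isomorphism. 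Combining with Step 2, this is exactly the claim.

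As a consistency check, both sides of $(\star)$ have dimension $d$: for a $(1,d)$-polarization, Riemann--Roch gives $h^0(A,L\otimes P_x)=\chi(L)=d$, and $\O_Z$ has length $d$. So $(\star)$ is injective iff surjective iff an isomorphism. This matches the fact that $\Psi(Z)$ is a sheaf in a single degree, so its fibre jumps in two adjacent degrees at once.

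I expect the main care to be needed in Step 2. One has to identify the supporting curve $C_Z$, which is really a Fitting support, with the set-theoretic support of $\Psi(Z)$. One must also check that the vanishing of the full derived fibre is detected by $\mathcal{H}^0$. Both facts are standard for coherent sheaves, since any nonzero derived fibre has nonzero top cohomology.
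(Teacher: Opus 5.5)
Your proposal is correct and follows essentially the same route as the paper: base change identifies the fibre $\Psi(Z)\otimes k(x)$ with $H^1(A,I_Z\otimes L\otimes P_x)$. The long exact sequence of the ideal sheaf sequence tensored with $L\otimes P_x$, using $H^1(A,L\otimes P_x)=0$, then identifies this with $\operatorname{coker}(\star)$. The only difference is that the paper looks at $H^1$ alone and uses $h^0(A,L\otimes P_x)=d=\length(\O_Z)$ to pass from ``not surjective'' to ``not an isomorphism,'' whereas your use of the full derived fibre makes that dimension count a mere consistency check.
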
 

\begin{proof}
By cohomology and base change, the fiber of $\Psi(Z)$ over $x$ is 
\[\Psi(Z)\otimes k(x)\cong H^1(A,I_Z\otimes L\otimes P_x),\]
and thus
\[C_Z=\{x\in\hat{A}~|~H^1(A,I_Z\otimes L\otimes P_x)\neq0\}.\]
First, consider the short exact sequence
\[0\to I_Z\to\O_C\to\O_Z\to 0,\]
then tensor it with $L\otimes P_x$, and take global sections cohomology.
Since $L\otimes P_x$ is ample and hence $H^0(A,L\otimes P_x)\neq 0$, by Mumford's Index Theorem
\cite[\S16]{Mumford}, we have $H^1(A,L\otimes P_x)=0$. Thus we obtain the following 
long exact sequence:
\begin{equation}\label{stareqn}
  0\to H^0(A,I_Z\otimes L\otimes P_x)\to H^0(A, L\otimes P_x)\xrightarrow{(\star)} H^0(A,\O_Z\otimes L\otimes P_x)\to H^1(A,I_Z\otimes L\otimes P_x)\to 0.
\end{equation}
Since $H^0(A, L\otimes P_x)$ and $H^0(A,\O_Z\otimes L\otimes P_x)$
have the same dimension $d$, we may conclude.
\end{proof}

\begin{rmk}
The map $(\star)$ restricts functions in the linear system of $L\otimes P_x$ to $Z$.
\end{rmk}

If $Z$ is symmetric and $x\in\hat{A}[2]$, then $\iota^*$ 
acts as an automorphism on \eqref{stareqn}. 
If the eigenvalues of this action on the domain and codomain of $(\star)$ differ, then $x\in C_Z$. We now discuss how to compute these eigenvalues.

The eigenvalues of the action of $\iota^*$ on $H^0(A,L\otimes P_x)$ are 
given by \Cref{dimcount}, and depend on the properties of $L\otimes P_x$.

Now we consider the action on $H^0(A,\O_Z\otimes L\otimes P_x)$.
If $Z$ is symmetric, its support consists of points that are not fixed by $\iota$ and points that are fixed by $\iota$, i.e. $2$-torsion points.
For each $z\in\Supp(Z)$ where $-z\neq z$, 
the action of $\iota^*$ exchanges $L\otimes P_x\otimes \O_Z|_{z}$
and $L\otimes P_x\otimes \O_Z|_{-z}$; this contributes eigenvalues $1$ and $-1$ each with multiplicity $l=\length(\O_Z|_{z})$.
For each $a\in \Supp(Z)$ where $-a= a$, 
the eigenvalues are computed using the quadratic form $q$ determined by $L\otimes P_x$ (see \Cref{sym.sec}). 
Let $M:=L\otimes P_x\otimes\O_Z|_a$ and consider the associated graded module 
\begin{equation}\label{asgr}
\gr M\cong M/\fm_aM\oplus \fm_aM/\fm_a^2M\oplus\cdots\oplus\fm_a^tM/\fm_a^{t+1}M.
\end{equation}
The action of $\iota^*$ on $\fm_a^iM/\fm_a^{i+1}M$ is multiplication by $(-1)^iq_{L\otimes P_x}(a)$ (cf.~\cite[\S13.2]{Polishchukbook}), 
so the portion of $Z$ supported on $a$ contributes
eigenvalue $q(a)$ with multiplicity $\sum_{i=0}^\infty\dim_{\C}\fm_a^{2i}M/\fm_a^{2i+1}M$ and eigenvalue $-q(a)$ with multiplicity $\sum_{i=0}^\infty\dim_{\C}\fm_a^{2i+1}M/\fm_a^{2i+2}M$.

We can think of the determinant of $\iota^*$ acting on 
$H^0(A,\O_Z\otimes L\otimes P_x)$ as an extension of the quadratic form $q$ to symmetric $d$-points.
The following useful observation is then a generalization of the fact that 
quadratic forms are all trivial at $e_A$. 

\begin{prop}\label[prop]{detZ}
Let $Z\in \Hilb^d_A$ be a symmetric $d$-point where $\Sigma Z=e_A$. 
The determinant of $\iota^*$ acting on $H^0(A,\O_Z\otimes L\otimes P_x)$ is independent of $x\in \hat{A}[2]$.
\end{prop}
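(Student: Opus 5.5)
We compute the determinant explicitly from the eigenvalue description preceding the statement, and compare it for $x$ and $x'=0$. The determinant is a product of local contributions, one for each point (or pair of points) in $\Supp(Z)$.

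For a pair $\{z,-z\}$ with $z\neq -z$, the contribution consists of eigenvalues $+1$ and $-1$, each with multiplicity $\ell_z=\length(\O_Z|_z)$. Its determinant is $(-1)^{\ell_z}$, which does not involve $x$. For a $2$-torsion point $a\in\Supp(Z)$, set $n_a=\length(\O_Z|_a)$. The graded description \eqref{asgr} gives eigenvalue $q_{L\otimes P_x}(a)$ on even graded pieces and $-q_{L\otimes P_x}(a)$ on odd ones. So the contribution is
\[
q_{L\otimes P_x}(a)^{n_a}\cdot(-1)^{o_a},
\]
where $o_a=\sum_i\dim\fm_a^{2i+1}M/\fm_a^{2i+2}M$. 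This dimension is computed from $\O_Z|_a$ alone: tensoring with the line bundle $L\otimes P_x$ does not change the Hilbert function. Hence $o_a$ is independent of $x$. The only $x$-dependence in the whole determinant is therefore the factor
\[
\prod_{a\in\Supp(Z)\cap A[2]} q_{L\otimes P_x}(a)^{n_a}.
\]

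By \eqref{qfaction}, $q_{L\otimes P_x}(a)=q_L(a)\,e_2(a,x)$. Replacing $L$ by $L\otimes P_x$ thus multiplies the determinant by
\[
\prod_a e_2(a,x)^{n_a}=e_2\Bigl(\textstyle\sum_a n_a a,\;x\Bigr),
\]
using bilinearity of $e_2$ on $A[2]\times\hat{A}[2]$.

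It remains to show $\sum_a n_a a=e_A$. The paired points contribute $\ell_z z+\ell_z(-z)=0$ to $\Sigma Z$. Hence
\[
\Sigma Z=\sum_{a\in A[2]} n_a a,
\]
and this is $e_A$ by hypothesis. So $e_2(\Sigma Z,x)=1$ and the determinant is independent of $x\in\hat{A}[2]$.

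The one point needing care is that $o_a$ does not depend on $x$. This holds because $\fm_a^iM/\fm_a^{i+1}M\cong(\fm_a^i\O_{Z,a}/\fm_a^{i+1}\O_{Z,a})\otimes(L\otimes P_x)_a$, and the last factor is a free module of rank one. The rest is bookkeeping with the bilinear pairing.
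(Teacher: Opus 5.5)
Your proposal is correct and follows the paper's own proof. You factor the determinant into local contributions, observe that only $\prod_a q_{L\otimes P_x}(a)^{n_a}$ depends on $x$, and use $q_{L\otimes P_x}(a)=q_L(a)e_2(a,x)$ with bilinearity to reduce the discrepancy to $e_2(\Sigma Z,x)=1$. You are slightly more careful than the paper at two points. You say explicitly that the pairs $\{z,-z\}$ contribute nothing to $\Sigma Z$, where the paper simply assumes the support lies in $A[2]$. You also justify why the graded dimensions are independent of $x$.
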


\begin{proof}
By our discussion of the eigenvalues of the action of $\iota^*$ on 
$H^0(A,\O_Z\otimes L\otimes P_x)$ above, varying~$x$ will not affect the eigenvalues contributed by points in the support of $Z$ that are not fixed by $\iota$. 

We now assume that the support of $Z$ consists of points $a_1,\ldots a_k\in A[2]$. Let $l_i$ be the length of $\O_Z|_{a_i}$. The condition $\Sigma Z=e_A$ implies $l_1a_1+\cdots +l_ka_k=e_A$ in the group law. 

For each $a_i$, the dimensions of the components of the decomposition
\eqref{asgr} are determined by the structure of $\O_Z|_{a_i}$ and are independent of the choice of $x$.
Let $n_i:=\sum_{i=0}^\infty\dim_{\C}\fm_a^{2i}M/\fm_a^{2i+1}M$, 
$m_i:=\sum_{i=0}^\infty\dim_{\C}\fm_a^{2i+1}M/\fm_a^{2i+2}M$, and 
let $q$ be the quadratic form associated to $L$. 
Noting that $l_i=m_i+n_i$, we have
\begin{align*}
\det(\iota^*\mid H^0(A,\O_Z\otimes L))
&=q(a_1)^{n_1}(-q(a_1)^{m_1})\cdots 
q(a_k)^{n_k}(-q(a_k)^{m_k})\\
&=(-1)^{m_1+\cdots+m_k}q(a_1)^{l_1}\cdots q(a_k)^{l_k}.
\end{align*}
Let $x\in \hat{A}[2]$ and let $q_x$ be the quadratic form determined by $L\otimes P_x$. Then, similarly, 
\[
\det(\iota^*\mid H^0(A,\O_Z\otimes L\otimes P_x))
=(-1)^{m_1+\cdots+m_k}q_x(a_1)^{l_1}\cdots q_x(a_k)^{l_k}.
\]
Using the fact that $e_2$ is bilinear and for
any $a\in A[2]$ we have
$q_x(a)=q(a)\cdot e_2(a,x)$ 
\Cref{qfaction}, we obtain:
\begin{align*}
\det(\iota^*\mid H^0(A,\O_Z\otimes L\otimes P_x))
&=(-1)^{m_1+\cdots+m_k}q(a_1)^{l_1}e_2(a_1,x)^{l_1}\cdots q(a_k)^{l_k}
e_2(a_k,x)^{l_k}\\
&=(-1)^{m_1+\cdots+m_k}(q(a_1)^{l_1}\cdots q(a_k)^{l_k})
e_2(l_1a_1+\cdots l_ka_k,x)\\
&=(-1)^{m_1+\cdots+m_k}q(a_1)^{l_1}\cdots q(a_k)^{l_k}
e_2(e,x)\\
&=(-1)^{m_1+\cdots+m_k}q(a_1)^{l_1}\cdots q(a_k)^{l_k}.\qedhere
\end{align*}
\end{proof}

By \Cref{albhilb}, for any
$Z\in K_{d-1}A$, $C_Z$ is a member of the linear system~$|\hat{L}|$. 
We may now complete the proof of \Cref{eigen}.

\begin{thm}[{{\Cref{eigen}}}]\label{eigen.body}
Let $Z\in K_{d-1}A$ be symmetric.
\begin{enumerate}[(a)]
\item The eigensystem $|\hat{L}|^{\pm}$ containing $C_Z$ is
specified by the determinant of the action of $\iota$ on~$Z$.
\item For each $2$-torsion point $x\in\hat{A}[2]$, there is a lower bound on the dimension of 
$\Psi(Z)\otimes k(x)$ that we can compute
with
a quadratic form. 
\end{enumerate}  
\end{thm}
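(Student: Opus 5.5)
The plan is to use the exact sequence \eqref{stareqn} at $x=e_{\hat{A}}$ for part (a), and at each $x\in\hat{A}[2]$ for part (b). In both cases $\iota^*$ acts equivariantly on \eqref{stareqn} because $Z$ is symmetric and $L\otimes P_x$ is symmetric.

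\emph{Part (a).} Since $Z\in K_{d-1}A$, \Cref{albhilb} gives $C_Z\in|\hat{L}|$. Because $\Psi$ is built from a symmetric $L$, \Cref{PhiL_lemma}(d) and \eqref{compose} show that $\Phi$ commutes with $\iota^*$. Hence $\iota^*\Psi(Z)\simeq\Psi(\iota(Z))=\Psi(Z)$, so $C_Z$ is $\iota$-invariant and lies in one of the eigensystems $|\hat{L}|^{\pm}$.

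To identify which one, I would realize $C_Z$ as a determinantal, or Fitting, support. The Fourier--Mukai transform of $I_Z\otimes L$ is computed by a two-term complex of vector bundles on $\hat{A}$,
\[
\Phi(L)\xrightarrow{\ \sigma\ }\Phi(\O_Z\otimes L),
\]
of equal rank $d$, whose fiber at $x$ is the map $(\star)$. Its cokernel is $\Psi(Z)$, so $C_Z$ is the zero locus of $\det\sigma$. This is a section of $\det\Phi(\O_Z\otimes L)\otimes\det\Phi(L)^{-1}\simeq\hat{L}\otimes P_{\Sigma Z}=\hat{L}$, using \Cref{albhilb}(b). Since $\iota^*$ linearizes both bundles compatibly, $\iota^*(\det\sigma)=\lambda\det\sigma$. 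I would compute $\lambda$ by restricting to a fixed point $x\in\hat{A}[2]$. There $\lambda$ equals $\det(\iota^*\mid H^0(\O_Z\otimes L\otimes P_x))\cdot\det(\iota^*\mid H^0(L\otimes P_x))^{-1}$, corrected by the value of the normalized linearization of $\hat{L}$ at $x$.

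Taking $x=e_{\hat{A}}$, where the normalization is trivial, $\lambda$ is the determinant of $\iota^*$ on $H^0(\O_Z\otimes L)$ times the known determinant of $\iota^*$ on $H^0(L)$, which \Cref{dimcount} gives as $(-1)^{h^0(L)^-}$. \Cref{detZ} shows the first factor is independent of $x$. The main obstacle is this step: checking that the $\iota$-linearization on $\det\Phi(L)^{-1}\otimes\det\Phi(\O_Z\otimes L)$ agrees with the normalized one on $\hat{L}$, so that the sign is intrinsic and not off by a global factor. I would handle it by evaluating at $e_{\hat{A}}$. If that is problematic, I would instead use \Cref{prop.eigencurve.mult}: compare the parity of the multiplicity of $C_Z$ at a $2$-torsion point $x$ with the rank of $\Psi(Z)\otimes k(x)$ derived below.

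\emph{Part (b).} For $x\in\hat{A}[2]$, $\dim\Psi(Z)\otimes k(x)=\dim\operatorname{coker}(\star)$, and $(\star)$ is $\iota^*$-equivariant. An equivariant map between $\pm1$-graded spaces of equal dimension has cokernel of dimension at least
\[
\bigl|\dim H^0(L\otimes P_x)^+-\dim H^0(\O_Z\otimes L\otimes P_x)^+\bigr|,
\]
since each eigenspace maps to the like eigenspace. The left term comes from \Cref{dimcount}, according to the type (sts, even or odd) of $q_{L\otimes P_x}=q_x$ in \Cref{tab:qform_labels}. The right term comes from the eigenvalue count of the previous discussion: $\pm1$ pairs from non-fixed points, and the graded pieces \eqref{asgr} weighted by $q_x(a)$ at fixed points. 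Both are thus computed from the quadratic form $q_x$, which gives the bound.
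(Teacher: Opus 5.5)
Your part (b) is essentially the paper's argument. The fiber $\Psi(Z)\otimes k(x)$ is the cokernel of the $\iota^*$-equivariant map $(\star)$ between spaces of equal dimension. It is therefore bounded below by the mismatch of the $(\pm1)$-eigenspace dimensions, which \Cref{dimcount} and $q_{L\otimes P_x}$ compute.

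For part (a) you take a genuinely different route, and it is correct. The paper works through the $2$-torsion points. Since $\det(\iota^*\mid H^0(A,\O_Z\otimes L\otimes P_x))$ is independent of $x$ (\Cref{detZ}), while $\det(\iota^*\mid H^0(A,L\otimes P_x))$ depends only on the type of $L\otimes P_x$, the curve $C_Z$ must pass through one of two complementary sets of points of $\hat{A}[2]$. A global argument then identifies this partition with the base loci of $|\hat{L}|^{\pm}$: the symmetric locus surjects onto the eigensystems, and a codimension count finishes it.

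You instead present $C_Z$ as the zero locus of $\det\sigma\in H^0(\hat{A},\hat{L})$. This section is invariant for the linearization induced from $L$ and $Z$, and you read off its sign at $e_{\hat{A}}$. That gives the explicit formula $\lambda_{C_Z}=\det(\iota^*\mid H^0(A,\O_Z\otimes L))\cdot(-1)^{h^0(A,L)^-}$ one curve at a time, with no global step.

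The compatibility you flagged is harmless. The induced linearization differs from the normalized one on $\hat{L}$ by a global sign. On the fibers over $e_{\hat{A}}$ it is the natural action on $H^0(A,L)$ and $H^0(A,\O_Z\otimes L)$, because the symmetric structure of the normalized Poincar\'e bundle is trivial along $A\times\{e_{\hat{A}}\}$.

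Your route also gives more. Set $\mu(x):=\det(\iota^*\mid H^0(A,\O_Z\otimes L\otimes P_x))\det(\iota^*\mid H^0(A,L\otimes P_x))$. Running the proof of \Cref{prop.eigencurve.mult} on $\det\sigma$ shows that $\mult_x C_Z$ is odd exactly when $\mu(x)=-1$, and that $q_{\hat{L}}(x)=\mu(x)\mu(e_{\hat{A}})$. From this, \Cref{oddodd}(a) can be read off directly.

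Your stated fallback would not work on its own. Part (b) together with \Cref{sally} only bounds the multiplicity from below and says nothing about its parity. The main line does not need it, however.
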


\begin{proof}
We will obtain this result by analyzing the determinant of the action of $\iota^*$ on the domain and codomain of $(\star)$.

(b) We established above that the fiber of $\Psi(Z)$ at $x$ is $H^1(A,I_Z\otimes L\otimes P_x)$. 

If the eigenvalues of $\iota^*$ acting on the domain and codomain of \eqref{stareqn} differ, then \eqref{stareqn} fails to be an isomorphism and the difference of eigenvalues determines a lower bound on 
$h^1(A,I_Z\otimes L\otimes P_x)$:
  \begin{align*}
    h^1(A,I_Z\otimes L\otimes P_x)&\geq|h^0(A,L\otimes P_x)^+-h^0(A,\O_Z\otimes L\otimes P_x)^+|\\
    &=|h^0(A,L\otimes P_x)^--h^0(A,\O_Z\otimes L\otimes P_x)^-|.
  \end{align*}
The eigenvalues are computed using \Cref{dimcount} and the quadratic form determined by $L\otimes P_x$, 
so the result is proven.

(a) We can use the idea from the proof of part (b). 
If the determinants of $\iota^*$ on the domain and codomain of \eqref{stareqn} are different, then their eigenvalues are different.

First, we observe that by \Cref{dimcount}, the determinant of the domain of \eqref{stareqn} depends on properties of $L\otimes P_x$: 
\begin{align*}
d\equiv 1(3)\mod 4 \quad&\Rightarrow\quad
\det(\iota^*\mid H^0(L\otimes P_x))=
\begin{cases}
\text{$1(-1)$ if $L\otimes P_x$ is even,} \\
\text{$-1(1)$ if $L\otimes P_x$ is odd.}
\end{cases}
\\
d\equiv 0(2)\mod 4 \quad&\Rightarrow\quad
\det(\iota^*\mid H^0(L\otimes P_x))=
\begin{cases}
\text{$-1(1)$ if $L\otimes P_x$ has sts},\\
\text{$1(-1)$ if $L\otimes P_x$ has no sts}.
\end{cases}
\end{align*}

By \Cref{detZ}, $\det(\iota^* |\, H^0(A,\O_Z\otimes L\otimes P_x))$ 
is independent of $x$. 
Thus, if $d$ is odd, $C_Z$ must pass through either $6$ or $10$ points of $\hat{A}[2]$, corresponding to the even or odd line bundles. 
Similarly, if $d$ is even, $C_Z$ must pass through either $4$ or $12$ points of $\hat{A}[2]$, corresponding to line bundles $L\otimes P_x$ that have or do not have sts. 

If we compose the isomorphism $\Psi:K_{d-1}A \to K_{\hat{A}}(0,\hat{l},-1)$ with the map to the supporting curve $C_Z$, we have a surjection
$K_{d-1}A\to |\hat{L}|$, which maps the symmetric $Z$ onto the eigensystems $|\hat{L}^{\pm}|$ (see \Cref{sec.moduli}).

Thus this determinantal condition has partitioned the curves in $|\hat{L}^{\pm}|$ in terms of passing through complementary sets of $2$-torsion points. The base loci of $\hat{L}^{\pm}$ also a partition $\hat{A}[2]$. 
These partitions must coincide: 
if they did not, we find a contradiction in the dimensions of the linear system
because the sub-system of curves
passing through a point outside the base locus 
has codimension~$1$ in a linear system.
\end{proof}

\begin{rmk}
The proof of (b) also shows we may 
 determine the eigenvalues of $\iota^*$ acting on a subspace $\Psi(Z)\otimes k(x)$ of dimension equal to the lower bound.
\end{rmk}

\begin{cor}\label[cor]{oddodd}
Let $d$ be even. 
\begin{enumerate}[(a)]
\item If $L$ is symmetric, then $\hat{L}$ has sts. If $L$ has sts, then $\hat{L}$ is odd.
\item Although the double-dual $L^{\wedge\wedge}$ always has the same N\'eron--Severi class as $L$, we have $L\simeq L^{\wedge\wedge}$ if and only if $L$ is odd.
\end{enumerate}
\end{cor}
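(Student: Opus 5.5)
The plan is to read off the parity and sts of $\hat{L}$ by comparing two descriptions of which points of $\hat{A}[2]$ lie on an eigencurve: the one coming from the proof of \Cref{eigen.body}, and the base-locus sizes that \Cref{dimcount} gives for $\hat{L}$. By \Cref{PhiL_lemma}(a) and (d), $\hat{L}$ is a symmetric polarization of type $(1,d)$, so \Cref{dimcount} applies to it with $d$ even. Part (b) will then follow from part (a) together with the equivariance in \Cref{PhiL_lemma}(c).

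For (a), I first need a symmetric $Z\in K_{d-1}A$. Since $d$ is even, I take $Z=\{z_1,-z_1,\dots,z_{d/2},-z_{d/2}\}$ with the $z_i$ general. Then $\Sigma Z=e_A$, and the determinant of $\iota^*$ on $H^0(A,\O_Z\otimes L\otimes P_x)$ is independent of $x$ by \Cref{detZ}. The proof of \Cref{eigen.body} then gives two facts:
\begin{enumerate}[(1)]
\item the set of $x\in\hat{A}[2]$ lying on $C_Z$ is either $S:=\{x : L\otimes P_x \text{ has sts}\}$, which has $4$ elements, or its complement, which has $12$;
\item this set is forced to coincide with the base locus of the eigensystem $|\hat{L}|^{\pm}$ containing $C_Z$.
\end{enumerate}
The partitions of $\hat{A}[2]$ into base loci of $|\hat{L}|^{\pm}$ must therefore have sizes $\{4,12\}$. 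If $\hat{L}$ had no sts, \Cref{dimcount} would give sizes $\{8,8\}$, a contradiction. Hence $\hat{L}$ has sts.

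Now suppose $L$ itself has sts. By \Cref{tab:qform_labels} and \eqref{qfaction}, the forms with sts are exactly $q_{e},q_{x_1},q_{x_2},q_{x_3}$, so $S$ is a coset of $\phi_L(A[2])=\langle x_1,x_2\rangle$ containing $e_{\hat{A}}$. Thus $S=\langle x_1,x_2\rangle\ni e_{\hat{A}}$. By (2), $S$ is the $4$-element base locus of $|\hat{L}|^{\pm}$.
\begin{itemize}
\item If $\hat{L}$ is even, \Cref{dimcount} says the $4$-element base locus is $\hat{A}[2]^-$, where $q_{\hat{L}}=-1$. It cannot contain $e_{\hat{A}}$, because every quadratic form is trivial at the identity.
\item If $\hat{L}$ is odd, the $4$-element base locus is $\hat{A}[2]^+$, which does contain the identity.
\end{itemize}
Hence $\hat{L}$ is odd. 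The step I expect to need the most care is (2), namely pinning down that the $4$-point set $S$ really is the $4$-point base locus rather than the complement of some other base locus. I would rely on the codimension argument at the end of the proof of \Cref{eigen.body} and state it explicitly for the $4$-element set.

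For (b), I restrict to symmetric $L$; for non-symmetric bundles the statement would need modification. Applying \Cref{PhiL_lemma}(c) twice, for $x\in\hat{A}[2]$ I get
\[(L\otimes P_x)^{\wedge\wedge}\simeq \big(\hat{L}\otimes P_{\phi_{\hat{L}}(x)}\big)^{\wedge}\simeq L^{\wedge\wedge}\otimes P_{\phi_L\phi_{\hat{L}}(x)}= L^{\wedge\wedge}\otimes P_{dx}=L^{\wedge\wedge},\]
using \Cref{PhiL_lemma}(a) and the fact that $d$ is even. So all $16$ symmetric bundles in $\Pic^l(A)$ have the same double dual. Taking $L$ to be the odd bundle, two applications of (a) show $\hat{L}$ is odd and then $L^{\wedge\wedge}$ is odd. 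Since there is a unique odd symmetric bundle in the class $l$, this gives $L^{\wedge\wedge}=L$ for odd $L$. Because every symmetric $L$ has this same double dual, $L\simeq L^{\wedge\wedge}$ holds exactly when $L$ is odd.
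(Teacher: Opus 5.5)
Your proposal is correct and follows the paper's proof. For (a), you identify the base loci of $|\hat{L}|^{\pm}$ with the $4$-point sts set $S$ and its complement via the argument in the proof of \Cref{eigen.body}(a), deduce from \Cref{dimcount} that $\hat{L}$ has sts, and read off oddness from $e_{\hat{A}}\in S$; for (b), you apply \Cref{PhiL_lemma}(c) twice and use uniqueness of the odd bundle. Two small remarks: in (a) a single symmetric $Z$ only yields $C_Z\supseteq S$ or $C_Z\supseteq S^c$ (not equality), so the identification with base loci genuinely rests on the codimension argument over the whole eigensystem, as you anticipated; and your explicit restriction in (b) to symmetric $L$, where $dx=0$ for $x\in\hat{A}[2]$, is more careful than the paper's phrasing, since for odd $L$ and $0\neq x\in\hat{A}[d-1]$ one still has $(L\otimes P_x)^{\wedge\wedge}\simeq L\otimes P_x$.
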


\begin{proof}
(a) 
In the proof of \Cref{eigen.body}(a), we see that for any choice of $L$ that is symmetric, the base locus of $|\hat{L}|^+$ (resp. $|\hat{L}|^-$) consists of $x\in \hat{A}[2]$ so that $L\otimes P_x$ has (resp. does not have) sts.
As $x\in \hat{A}[2]$ varies, the set of all $L\otimes P_x$ is the set of all symmetric line bundles. 
By \cite[6.9.5]{BirkenhakeLange},
the symmetric line bundles on $\hat{A}$ with sts are a principal homogeneous space over $\hat{A}[2]/(K(\hat{L})\cap\hat{A}[2])$, hence there are $4$ of them. Thus the eigenspaces $|\hat{L}|^{\pm}$ have $4$ and $12$ base points and by \Cref{dimcount}, $\hat{L}$ must have sts.

Also by \Cref{dimcount}, we can detect whether $\hat{L}$ is even or odd
depending on which base locus contains the identity element $e$: $\hat{L}$ is odd if $e$ is in the base locus of the eigensystem that has $4$ base points, and even if it is in the base locus with $12$ base points. Thus, if $L$ has sts, $\hat{L}$ is odd (and if $L$ has no sts, $\hat{L}$ is even).

(b) Odd line bundles are unique, and by (a), the dual of an odd line bundle is odd. By \Cref{PhiL_lemma}(c) (also \cite{BLdual}), the N\'eron--Severi class of $L$ is the same as $L^{\wedge\wedge}$. Applying \Cref{PhiL_lemma}(c) twice, we have:
\[
(L\otimes P_x)^{\wedge\wedge}\simeq 
(\hat{L}\otimes\phi_{\hat{L}}(x))^{\wedge}
\simeq 
L^{\wedge\wedge}\otimes P_{dx}.
\]
If we choose $L$ to be odd, the right-hand side is 
$L\otimes P_{dx}$, which is only isomorphic to $L\otimes P_x$ when $x$ is trivial.
\end{proof}

\subsection{Singularities of $C_Z$}
In this section we discuss further results that allow us to make deductions about singularities of supporting curves $C_Z$. By work of Lange--Sernesi, genus $2$ curves in abelian surfaces can only contain ordinary singularities 
\cite[Prop~2.2]{LangeSernesi}. More recently, Knutsen--Lelli-Chiesa have shown that in fact those singularities must be nodal unless $4\nmid d$ \cite[Thm.~1]{KLC}. When $4|d$, the non-nodal singularities must be ordinary triple, $4$-tuple, or $6$-tuple points \cite[Rmk.~1]{KLC}. Since we will 
be focused on the behavior of curves at $2$-torsion points in the surface,
we note the following: 

\begin{lemma}\label[lemma]{nocusp}
A symmetric curve in an abelian surface cannot have a cusp singularity at a $2$-torsion point. 
\end{lemma}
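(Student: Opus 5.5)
We work locally at a $2$-torsion point $a\in A[2]$ lying on the symmetric curve $C$. Since $t_a$ commutes with $\iota$ up to the translation $t_{2a}=\id$, we may assume $a=e_A$. Choose local analytic coordinates $(u,v)$ at $e_A$ coming from the universal cover $\C^2\to A$. In these coordinates $\iota$ acts as $(u,v)\mapsto(-u,-v)$, so $\iota$ is linear at the fixed point.

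Because $C$ is symmetric, its local equation $f\in\O_{A,e}$ satisfies $f\circ\iota=\lambda f\cdot(\text{unit})$. The idea is to strip off the unit. Write $f\circ\iota=hf$ with $h$ a unit. Applying $\iota$ twice gives $(h\circ\iota)\,h=1$, so $h(0)=\pm1$. Replace $f$ by $f+\epsilon(f\circ\iota)$, where $\epsilon=h(0)$. This new function equals $f(1+\epsilon h)$, and $1+\epsilon h$ is a unit since $\epsilon h(0)=1$. It is also an eigenvector: one checks $(f+\epsilon f\circ\iota)\circ\iota=\epsilon(f+\epsilon f\circ\iota)$. So after this replacement $f$ is a genuine eigenfunction of $\iota^*$ with eigenvalue $\epsilon=\pm1$. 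This is the step needing a little care, but it is the standard averaging trick. Alternatively, it follows directly from the section-level statement used in the proof of \Cref{prop.eigencurve.mult}.

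Now expand $f=\sum_k f_k$ into homogeneous parts. Since $\iota^*f_k=(-1)^kf_k$, an eigenfunction contains only even-degree parts or only odd-degree parts. Suppose $C$ had a cusp at $e$. Then the multiplicity is $2$, so $f$ is even, $f=f_2+f_4+\cdots$, with tangent cone $f_2=\ell^2$ a double line. Change linear coordinates so that $\ell=u$; this keeps $\iota=-\id$. Then $f=u^2+g(u,v)$ with $g$ even and of order at least $4$. The monomial $v^3$ needed for an ordinary cusp $u^2=v^3$ has odd degree, so it cannot appear.

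More generally, we show $f$ is not analytically equivalent to $u^2+v^{2k+1}$ for any $k$, i.e. no $A_{2k}$ (cuspidal, unibranch) singularity occurs. Apply Weierstrass preparation in $u$, equivariantly: $f=\text{unit}\cdot(u^2+b(v)u+c(v))$. We can arrange $b$ odd and $c$ even, for instance by noting that uniqueness in the preparation theorem forces the Weierstrass polynomial to be $\iota$-equivariant. Completing the square, $u\mapsto u-b/2$, preserves equivariance because $b$ is odd. This gives $u^2+c(v)$ with $c$ even, so $c=v^{2k}\cdot\text{unit}$ with $k\ge2$ or $c=0$. The curve is then $u^2=-v^{2k}w(v)$. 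Since the order is even, it splits into two branches $u=\pm i v^k\sqrt{w}$, a tacnode-type $A_{2k-1}$ singularity, or is non-reduced if $c=0$. In either case it is not unibranch, hence not a cusp.

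The main obstacle is making the equivariant Weierstrass step rigorous, namely checking that the parity of $b$ and $c$ is preserved. I would handle this by uniqueness: $f\circ\iota$ is an eigenfunction, and the preparation of $\pm f\circ\iota$ has Weierstrass polynomial $u^2-b(-v)u+c(-v)$. Uniqueness then forces $b(-v)=-b(v)$ and $c(-v)=c(v)$.
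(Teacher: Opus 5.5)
Your proof is correct, but it takes a different route from the paper's.

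The paper gives a one-line tangent-direction argument: at a cusp, $\iota$ would have to act identically on the tangent line of $C$, which is impossible because $\iota$ acts on $T_xA$ as $-\id$. Implicitly this relies on the parametrization $t\mapsto(t^{2k+1},t^2)$ of the cusp branch: any lift $t\mapsto\pm t$ of $\iota$ to the normalization fixes the leading term $t^2$. Also, ``tangent space of $C$'' there has to be read as the reduced tangent cone, since the Zariski tangent space at a cusp is all of $T_xA$.

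You instead work with the local equation. Averaging makes it a genuine $\iota^*$-eigenfunction, so its homogeneous parts all have one parity. Uniqueness in Weierstrass preparation then forces $b$ odd and $c$ even. Completing the square equivariantly reduces the equation to $u^2+c(v)$ with $c$ even, which has two branches. This is longer, but every step is justified.

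It also proves more: a double point of a symmetric curve at a $2$-torsion point is of type $A_{2k-1}$ or non-reduced. So every $A_{2k}$ singularity is excluded, not only the ordinary cusp, and that covers what the genus count in the Type~2 analysis needs.

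Your parity viewpoint also shows that restricting to double points is essential. The function $u^3-v^5$ is an odd eigenfunction cutting out a unibranch germ, so the statement would fail for ``cusps'' in the broader sense of unibranch singular points.
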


\begin{proof}
  If a symmetric curve $C$ in an abelian surface has a cusp singularity at a $2$-torsion point $x$, then 
$\iota$ must act identically on the the tangent space of $C$ at $x$. However, this is not possible because the involution $\iota$ acts on the tangent space of $A$ at $x$ by central inversion. 
\end{proof}

An important tool for us will be the dimension of the fiber of  $\Psi(Z)$ at a point, which gives a lower bound for the multiplicity of $C_Z$ at that point.
We begin by reminding the reader the definition of multiplicity.

\begin{defn}
  Let $(R,\fm)$ be a Noetherian local ring of dimension $d$. The (Hilbert--Samuel) function $\operatorname{length}_R(R/\fm^n)$ is eventually a polynomial $P(n)$ of degree $d$ \cite[00L8]{stacks-project}. The multiplicity of $R$, denoted $e(R)$, is the leading coefficient of this polynomial times $d!$. If $X$ is a scheme and $x\in X$ is a point, we define the multiplicity of $X$ at $x$, denoted $e_x(X)$, to be the multiplicity of the local ring $\O_{X,x}$.
\end{defn}

Curves in an abelian surface are necessarily Gorenstein and thus Cohen--Macaulay. 
As a result, we may apply the following lemma, which we expect is well-known to experts, to the curves $C_Z$.

\begin{lemma}\label[lemma]{sally}
  Let $\fF$ be a rank 1 torsion free sheaf on a Cohen--Macaulay curve $C$ and consider the fiber $\fF_x/\fm_x\fF_x$ at a point $x\in C$. Then $\dim_\C\fF_x/\fm_x\fF_x\leq e_x(C)$.
\end{lemma}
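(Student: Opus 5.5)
Write $R=\O_{C,x}$, $\fm=\fm_x$ and $M=\fF_x$. Since $R$ is a one-dimensional Cohen--Macaulay local ring, we may assume the residue field is infinite (here it is $\C$). Then $\fm$ admits a \emph{minimal reduction} generated by a single element: there is $t\in\fm$ with $t\fm^n=\fm^{n+1}$ for all $n\gg0$. Because $R$ is Cohen--Macaulay, $t$ is a nonzerodivisor. The standard formula then gives
\[
e_x(C)=e(R)=\length_R(R/tR),
\]
since for a parameter ideal in a Cohen--Macaulay ring the multiplicity equals the colength. The plan is to bound the minimal number of generators $\mu(M)=\dim_\C M/\fm M$ by a colength of this kind.

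First, $M$ is a torsion-free $R$-module of rank $1$. Torsion-free over a reduced-at-generic-points one-dimensional CM ring means $M$ has depth $1$, i.e.\ $M$ is maximal Cohen--Macaulay. Hence $t$ is also a nonzerodivisor on $M$. From the additivity of $e(t,-)$, or equivalently from the multiplicity formula for MCM modules, we get
\[
\length_R(M/tM)=e(t,M)=\operatorname{rank}(M)\cdot e(t,R)=e(R).
\]
The rank here is $1$, interpreted generically on each component. If $C$ is not integral, one uses that a rank $1$ sheaf has rank $1$ at every generic point of $C$, and $e(t,M)=\sum_{\mathfrak p}\length(M_{\mathfrak p})\,e(t,R/\mathfrak p)$, which matches the same sum for $R$.

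Since $tM\subseteq\fm M$, there is a surjection $M/tM\twoheadrightarrow M/\fm M$, and therefore
\[
\dim_\C M/\fm M=\length(M/\fm M)\le \length(M/tM)=e_x(C).
\]

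The main step to verify carefully is the equality $\length(M/tM)=e(R)$ for a rank $1$ MCM module when $C$ may be reducible or nonreduced at $x$. For the curves in our application, which are members of $|\hat{L}|$ and hence Gorenstein, reducedness is not guaranteed in general. In the nonreduced case I would interpret ``rank $1$'' as $M_{\mathfrak p}\cong R_{\mathfrak p}$ at each minimal prime and apply the associativity formula $e(t,M)=\sum_{\mathfrak p}\length_{R_{\mathfrak p}}(M_{\mathfrak p})\,e(t,R/\mathfrak p)$. With the rank hypothesis this agrees term by term with $e(t,R)$. The existence of the one-element reduction and the equality $e(t,R)=e(\fm,R)$ are standard (Northcott--Rees), using the infinite residue field.
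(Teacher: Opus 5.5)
Your argument is correct, but it is organized differently from the paper's proof. The paper first treats the case where $\fF$ is an ideal sheaf. There $\dim_\C \fF_x/\fm_x\fF_x$ is the minimal number of generators of an ideal of the one-dimensional Cohen--Macaulay ring $\O_{C,x}$, and the paper cites Sally's theorem that such an ideal needs at most $e_x(C)$ generators. It then reduces the general case to this one by writing $\fF\cong I_Z\otimes\O_C(D)$ (the representation it records for Gorenstein curves), so that $\fF_x\cong (I_Z)_x$.

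You instead prove the bound directly for the module $M=\fF_x$:
\begin{itemize}
\item a one-element minimal reduction $t$ of $\fm$ gives $e(R)=\length(R/tR)$;
\item torsion-freeness makes $M$ maximal Cohen--Macaulay, so $t$ is $M$-regular;
\item the associativity formula plus the rank-one hypothesis gives $\length(M/tM)=e(R)$, hence $\dim_\C M/\fm M\le e(R)$.
\end{itemize}
This is essentially the standard proof of the Akizuki--Cohen--Sally bound, run on $M$ rather than on an ideal.

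Your route is self-contained and slightly more general. It never passes through the Gorenstein ideal-sheaf representation, so it fits the lemma's stated Cohen--Macaulay hypothesis directly. It handles reducible curves through the associativity formula. With no extra work it gives $\dim_\C M/\fm M\le r\cdot e(R)$ for maximal Cohen--Macaulay modules of rank $r$. The paper's route is shorter because it outsources the multiplicity computation to the literature.

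Two small remarks. First, the qualifier ``reduced-at-generic-points'' is unnecessary for your depth claim: any element killed by $\fm$ would be killed by the nonzerodivisor $t$. Second, your concern about nonreduced curves does not arise in the paper's applications. There $\hat{A}$ has Picard rank $1$ and $\hat{l}$ is primitive, so every curve in $|\hat{L}|$ is integral.
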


\begin{proof}
  First, suppose $\fF$ is an ideal sheaf of $\O_X$. Then $\dim_\C\fF_x/\fm_x\fF_x$ is the minimal number of generators of the ideal $\fF_x$. Since $\O_{X,x}$ is Gorenstein and thus Cohen--Macaulay, we have that this is bounded above by $e_x$ (see, for example, \cite[Thm.~3.1.1]{Sally}).

  More generally, writing $\fF\cong I_Z\otimes\O_C(D)$ for $I_Z\subseteq\O_X$ and $D$ an effective Cartier divisor, we have that $\dim_\C\fF_x/\fm_x\fF_x=\dim_\C (I_Z)x/\fm_x(I_Z)_x$ and we are done.
\end{proof}

\subsection{Eigenvalues of $\Psi(Z)$}\label{sec.eval}
The results in this section show how the eigenvalues of $\Psi(Z)$ can be used to recover information about $C_Z$, particularly when $C_Z$ is hyperelliptic.

\begin{lemma}\label[lemma]{invol}
Let $X$ be a smooth curve with a nontrivial involution $j:X\to X$ and 
$D=\sum_{P\in X} n_PP$ a Weil divisor fixed by $j$. 
For each $x\in X$ 
fixed by $j$,
the eigenvalue of $j^*$ acting on the fiber of $\O(D)$ at $x$ is $(-1)^{n_x}$. 
\end{lemma}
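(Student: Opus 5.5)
The statement is local at a fixed point $x$, so the plan is to reduce to a computation in a local coordinate. There is one point of interpretation to settle first. Since $D$ is fixed by $j$, there is a canonical isomorphism $j^*\O(D)\simeq\O(D)$: both are subsheaves of the constant sheaf of rational functions $K(X)$, and $f\mapsto f\circ j$ carries $\O(D)$ onto $\O(j^*D)=\O(D)$. I take ``the action of $j^*$'' to mean this linearization, which restricts to the identity on $\O_X=\O(0)$.

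\textbf{Step 1: the fixed point.} Since $j$ is a nontrivial involution and $x$ is a fixed point, the differential $dj_x$ acts on the one-dimensional tangent space $T_xX$ by $-1$. To see this, note that $j$ is an automorphism of finite order, so we may linearize it (characteristic $0$). Choose a uniformizer $t$ at $x$ and replace it by $t' = (t - j^*t)/2$. Then $j^*t'=-t'$, and $t'$ is still a uniformizer provided $dj_x\neq 1$. If $dj_x$ were $1$, then $j$, being of finite order and linearizable, would be the identity near $x$, hence on all of $X$, contradicting nontriviality. So there is a uniformizer $t$ with $j^*t=-t$.

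\textbf{Step 2: the fiber.} Near $x$ the stalk of $\O(D)$ is $t^{-n_x}\O_{X,x}$, free with generator $t^{-n_x}$. The fiber $\O(D)\otimes k(x)$ is spanned by the image of $t^{-n_x}$. Now $j^*(t^{-n_x})=(-t)^{-n_x}=(-1)^{n_x}t^{-n_x}$. Any other local generator has the form $u\,t^{-n_x}$ with $u$ a unit, and $j^*u\equiv u(x)$ modulo $\fm_x$ because $j(x)=x$. So the eigenvalue on the fiber is well defined and equals $(-1)^{n_x}$.

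\textbf{Remaining checks.} Nothing deeper is needed; this mirrors the graded computation $(-1)^r q_L(a)$ used in \Cref{prop.eigencurve.mult}, with trivial ``quadratic form'' because the linearization is the identity on $\O_X$. The only step needing care is Step 1, namely that a nontrivial involution acts by $-1$ on the tangent space at each fixed point. I will handle it by the averaging argument above together with the identity theorem. A secondary point is to state the normalization of the linearization explicitly, since a different choice would multiply every eigenvalue by a global sign $\pm1$.
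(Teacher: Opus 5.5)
Your proposal is correct and takes essentially the same route as the paper: the canonical linearization $f\mapsto f\circ j$ coming from $j(D)=D$, a uniformizer $t$ at $x$ with $j^*t=-t$, and the local generator $t^{-n_x}$, which gives the eigenvalue $(-1)^{n_x}$. The one difference is that you justify the existence of the anti-invariant uniformizer, via averaging and the observation that a trivial action on the cotangent space at $x$ would force $j=\id$ on the irreducible curve $X$; the paper simply asserts this.
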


\begin{proof}
Since $j(D)=D$, we have a canonical linearization $j^*\O_X(D)\to \O_X(D)$ that maps each section $f$ to $f\circ j$. We may choose a uniformizer $t$ of $\O_{X,x}$ so that $j^*t=-t$. The local generator of the stalk $\O_X(D)_x$ is $t^{-n_x}$, and hence the eigenvalue of the action of $j^*$ on the fiber is given by $(-1)^{-n_x}=(-1)^{n_x}$.
\end{proof}

\begin{rmk}\label[remark]{eval.switch}
In the above lemma, the linearization on $\O(D)$ coming from the choice of $D$ determines the eigenvalues. Given a different linearization, the eigenvalues will either be the same or all be multiplied by $-1$.
In general a  line bundle fixed by $j$, independent of the choice of linearization, determines a \emph{partition} of fixed points by the eigenvalues of their fibers.
\end{rmk}

A smooth, projective curve $C$ of genus $g\geq 2$ is hyperelliptic if it admits a degree $2$ map to $\mathbb{P}^1$. By Riemann--Hurwitz, there are $2g+2$ ramification points where $g$ is the genus of $C$.
Since we are working over $\C$, a field of charactertic not equal to $2$, this equips $C$ with a nontrivial involution $j:C\to C$ called the hyperelliptic involution. We call a divisor of the form $H=P+j(P)$ for $P\in C$ a hyperplane divisor, as it is the preimage of a hyperplane (a point) in $\mathbb{P}^1$. 
Any two hyperplane divisors are linearly equivalent.

\begin{lemma}\label[lemma]{sym.divisor}
Let $C$ be any smooth, projective hyperelliptic curve with hyperelliptic involution $j:C\to C$. For any line bundle $\L$ on $C$ so that $j^*\L\simeq \L$, there is a $D$ such that $j(D)=D$ and $\L\simeq \O(D)$.
\end{lemma}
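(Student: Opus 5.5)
We want: if $j^*\L\simeq\L$ on a hyperelliptic curve $C$ of genus $g$, then $\L\simeq\O(D)$ with $j(D)=D$. The approach is to use the structure of $j$ acting on $\Pic(C)$: for any divisor $E$ on $C$, $E+j(E)$ is linearly equivalent to $(\deg E)\,H$, where $H$ is a hyperplane divisor. This holds because for each point $P$, $P+j(P)\sim H$. Consequently $j^*$ acts on $\Pic^0(C)$ as $-1$, and on $\Pic^n(C)$ as $\O(E)\mapsto \O(nH-E)$.

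We may reduce to degree $0$ or $1$. Write $\deg\L = n = 2k+\epsilon$ with $\epsilon\in\{0,1\}$. If $\epsilon=1$, fix a ramification point $R$ (a fixed point of $j$), which is itself a $j$-invariant divisor. Set $\L'=\L\otimes\O(-kH-\epsilon R)$. Since $H$ and $R$ are $j$-invariant divisors, $\L'$ is again $j$-invariant up to isomorphism, and $\deg\L'=0$. It then suffices to show $\L'\simeq\O(D')$ with $D'$ $j$-invariant, and take $D=D'+kH+\epsilon R$.

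For degree $0$: $j^*\L'\simeq\L'^{-1}$ by the observation above, so invariance gives $\L'^{\otimes2}\simeq\O$, i.e.\ $\L'$ is $2$-torsion in $JC$. The main step is the classical description of $JC[2]$ for hyperelliptic curves. Let $R_1,\dots,R_{2g+2}$ be the Weierstrass points. Every element of $JC[2]$ is represented by $\sum_{i\in S}R_i-|S|R_1$ for a subset $S$ of even size; equivalently, the differences $R_i-R_1$ generate $JC[2]$. To justify this, note that $2(R_i-R_1)\sim H-H=0$. The relations are generated by $\sum_i R_i\sim(g+1)H$, the divisor of $y$. Counting then gives $2^{2g}$ distinct classes, which is all of $JC[2]$. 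Such a divisor is a combination of fixed points, hence $j$-invariant, which finishes the proof.

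The step I expect to be the main obstacle is the counting assertion that these classes exhaust $JC[2]$. One must check that $\sum_{i\in S}R_i\sim \frac{|S|}{2}H$ only for $S=\emptyset$ or $S$ the complement-equivalent full set. This comes from Riemann--Roch on $C$: a divisor $\sum_{i\in S}R_i$ with $|S|\le g+1$ moving in a pencil would have to be composed of the $g^1_2$, but the $R_i$ are distinct ramification points, so it cannot be. I would cite this standard fact (e.g.\ Mumford's Tata lectures) rather than reprove it. An alternative is to use $j$-invariant divisors of the form $\sum_{i\in S}R_i+mH$ directly and match cardinalities: there are $2^{2g}$ of them modulo linear equivalence in each degree parity.
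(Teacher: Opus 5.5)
Your proof is correct, but it takes a different route from the paper.

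\textbf{The paper's route.} The paper works in degree $g$. It writes $\L\simeq\O(D)$ with $D=E+F$ effective, where $E$ is a maximal sum of fibers $P+j(P)$ and $F$ contains no fiber. It then uses the description of the fibers of the Abel map $C^{(g)}\to J^g_C$ (citing Kass): the part $F$ is determined by the linear equivalence class. Hence $j(D)=E+j(F)\sim D$ forces $j(F)=F$. Other degrees are reached by twisting with multiples of a Weierstrass point.

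\textbf{Your route.} You normalize to degree $0$, where $j^*=-1$, so invariance becomes $2$-torsion. You then invoke the classical fact that $JC[2]$ is generated by the differences $R_i-R_1$.

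\textbf{Common input.} Both proofs rest on the same fact: for $m\le g$, every member of $|mH|$ is a sum of $m$ fibers of the double cover, since $h^0(mH)=m+1$. In your version this is exactly what rules out extra relations $\sum_{i\in S}R_i\sim\tfrac{|S|}{2}H$. You first replace $S$ by its complement so that $|S|\le g+1$, hence $m=|S|/2\le g$; that is the precise content of your ``moving in a pencil'' remark.

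\textbf{What your route buys.} It gives a global picture: symmetric line bundles of a fixed degree form a torsor under $JC[2]$. Each is represented by a divisor $\sum_{i\in S}R_i+mH$ with $|S|\equiv\deg\L \pmod 2$, and $S$ is unique up to complementation. This essentially yields \Cref{lem.unique} and \Cref{rmk.summary} at once.

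\textbf{What the paper's route buys.} It avoids any appeal to the structure of $JC[2]$. It directly produces the normal form $E+F$ with $F$ a reduced sum of at most $g$ ramification points, which the paper then reuses in proving \Cref{lem.unique}.
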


\begin{proof}
Given any effective divisor $D$ on a hyperelliptic curve $C$, 
we may write it uniquely as $D=E+F$, 
where $E$ and $F$ are effective and $E$ is a maximal sum of divisors linearly equivalent to $H$. 
If the degree of $D$ is equal to the genus $g$ of $C$, 
then the Abel map $A:C^{(g)}\to J^g_C$ is surjective and its fiber
over $|D|$ is $n$-dimensional where $E\sim nH$. This fiber
consists of all divisors of the form 
$F+E'$ for $E'\sim nH$ (see  \cite[\S1]{Kass}). 

Thus for any degree $g$ line bundle $\L$ on $C$, $\L\simeq \O(D)$ for some effective $D=E+F$ as above.
We note that $j(D)=j(E)+j(F)=E+j(F)$, and $j(D)$ is linearly equivalent to $D$ if and only if $F=j(F)$. Thus, if $\L$ is symmetric, $F=j(F)$, and thus $D=j(D)$.

Let $P\in C$ be a fixed point of $j$.
For any degree $d$ line bundle $\L$ on $C$, $\L\simeq \O(D+(d-g)P)$ where $D$ is effective and degree $g$.
If $j^*\L\simeq \L$, then $j^*\O(D)\simeq \O(D)$, and we may conclude.
\end{proof}

\begin{lemma}\label[lemma]{lem.unique}
Let $\L$ be a symmetric line bundle 
on a smooth, hyperelliptic curve $C$ where the degree $d\geq0$ of $\L$ has the same parity as the genus $g$ of $C$. 
The partition of the $j$-fixed points of $C$ 
given by $\L$ (see \Cref{eval.switch})
uniquely determines 
the isomorphism class of $\L$.
\end{lemma}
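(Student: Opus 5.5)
Fix the hyperelliptic involution $j$ on $C$ and let $W=\{P_1,\dots,P_{2g+2}\}$ be its fixed points (the Weierstrass points). By \Cref{sym.divisor} every symmetric $\L$ is $\O(D)$ for a $j$-invariant divisor $D$. Any $j$-invariant divisor has the form $\sum_{i}\epsilon_iP_i+\sum_k(Q_k+j(Q_k))$ plus further even multiples of Weierstrass points. Since $Q+j(Q)\sim H$ and $2P_i\sim H$, we get
\[
D\sim \sum_{i\in S}P_i+nH,\qquad S\subseteq\{1,\dots,2g+2\},
\]
with $|S|+2n=\deg\L$. By \Cref{invol}, the linearization coming from this divisor gives eigenvalue $-1$ exactly at the points $P_i$ with $i\in S$, and $+1$ at the others (the terms $nH$ can be chosen away from $W$, or they contribute even multiplicities). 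So the partition attached to $\L$ in \Cref{eval.switch} is $\{S,S^c\}$, unordered, since a change of linearization swaps the two sides.

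To prove uniqueness, suppose $\L\simeq\O(\sum_{S}P_i+nH)$ and $\L'\simeq\O(\sum_{S'}P_i+n'H)$ have the same degree and the same partition. Then $S'=S$ or $S'=S^c$.
\begin{itemize}
\item If $S'=S$, then $n=n'$ by degree, and $\L\simeq\L'$.
\item If $S'=S^c$, use the standard relation $\sum_{i=1}^{2g+2}P_i\sim(g+1)H$. This gives
\[
\sum_{S^c}P_i\sim (g+1)H-\sum_S P_i\sim \sum_S P_i+\bigl(g+1-|S|\bigr)H,
\]
where the second step uses $2\sum_S P_i\sim |S|H$. Hence $\L'\simeq\O\bigl(\sum_S P_i+(n'+g+1-|S|)H\bigr)$. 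Comparing degrees forces $n'+g+1-|S|=n$, so again $\L'\simeq\L$.
\end{itemize}
The degree comparison requires $|S|\equiv|S^c|\equiv\deg\L\pmod 2$. This holds because $|S|+|S^c|=2g+2$ is even. The hypothesis that $\deg\L\equiv g\pmod 2$ should enter in making sure the representation with $n\ge 0$, or at least some representation, exists, and in matching $|S|\equiv g+1-|S|$ under the swap.

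The remaining point, and the main obstacle, is that $\sum_S P_i+nH$ for fixed $S$ and $n$ is a single class. We still have to rule out two different subsets $S\neq S'$ with $S'\neq S^c$ giving isomorphic line bundles. Here the approach is to use the classical fact that the only linear relations among the $P_i$ modulo $H$ are generated by $2P_i\sim H$ and $\sum P_i\sim(g+1)H$. This is the description of $J[2]$ as even subsets of $W$ modulo complementation. That description is consistent with the partition being a well-defined invariant, and it is also what makes the statement an "if" direction only: the partition determines $\L$, as shown above.

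I expect the delicate part to be handling the parity hypothesis, which may be needed to fix sign conventions: whether $H$ contributes $(-1)^0$ or whether the partition must be read via $|S|\equiv g \pmod 2$. The check is that when $\deg\L\equiv g$, exactly one of $S$ and $S^c$ has the correct parity relative to $n$. So the unordered partition picks out a canonical $S$, and the class is $\sum_S P_i+\tfrac{\deg\L-|S|}{2}H$.
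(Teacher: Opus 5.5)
Your argument is correct and complete, but it handles the key step differently from the paper. Both proofs reduce to $\L\simeq\O(\sum_{i\in S}P_i+nH)$ with partition $\{S,S^c\}$. The remaining issue is that a second bundle $\L'$ with the same partition might be represented using $S^c$ rather than $S$.

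You handle this with the relation $\sum_{i=1}^{2g+2}P_i\sim(g+1)H$, together with $2P_i\sim H$. This shows the two representatives define the same class once degrees match.

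The paper instead arranges that $S^c$ can never occur. It uses the effective degree-$g$ representative from the proof of \Cref{sym.divisor} to write $D=E+F+(d-g)P$, with $F$ a sum of $m\le g$ distinct Weierstrass points. The other side of the partition then has $2g+2-m\ge g+2$ points, so it cannot be $F'$. Hence $F'=F$, and the $E$-parts agree by degree. In the paper, the parity hypothesis is used only so that the shift $(d-g)P$ does not change the parity of the coefficient at $P$.

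Your route needs no size bound, and it shows that neither $d\ge 0$ nor $d\equiv g \pmod 2$ is actually necessary. The paper's route avoids the relation $\sum P_i\sim(g+1)H$, which it only invokes afterwards in \Cref{rmk.summary}.

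You should, however, drop your final two paragraphs and the speculation just before them, which contain false or irrelevant claims. Since $|S|+|S^c|=2g+2$, the sets $S$ and $S^c$ always have the same parity, as you note yourself. So it is not true that ``exactly one of $S$ and $S^c$ has the correct parity,'' and the congruence $|S|\equiv g+1-|S|$ plays no role. In the paper's normalization, a canonical side is singled out by size, not parity, and your argument does not need a canonical side at all.

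Likewise, ruling out $S'\neq S,S^c$ with isomorphic bundles is not the ``main obstacle.'' It is not part of the statement, which only asserts that the partition determines $\L$. In any case it follows immediately from \Cref{eval.switch}, because the unordered partition is an invariant of the isomorphism class of $\L$.
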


\begin{proof}
Let $\L$ be a symmetric line bundle on $C$ of degree $d$ and let  $P_1,\ldots,P_{2g+2}$ be the ramification points of $C$. 
As in the proof \Cref{sym.divisor}, we may write 
$\L\simeq\O(D)$ for a symmetric divisor $D$ of the form $D=E+F+(d-g)[P]$.
Since the divisor $F$ contains no hyperplane divisors, it is a sum of distinct ramification points of the hyperelliptic curve, so, after possibly renaming the ramification points, we have $F=P_{1}+\cdots+P_{m}$ for some $m\leq g$. 

The coefficent in $D$ of 
any point $P_i$ in the support of $F$ must be odd
and the coefficient of any ramification point not in the support of $F$ must be even.
By \Cref{invol} and \Cref{eval.switch}, the partition of ramification points by eigenvalue given by $\L$ consists of 
$\{P_{1},\ldots,P_{m}\}\sqcup \{P_{m+1},\ldots,P_{2g+2}\}$.

Now, suppose we have another symmetric line bundle $\L'$ on $C$  
that has the same degree as $\L$ and has the same partition of ramification points by eigenvalue.
As above, we may write $\L'\simeq\O(D')$ for $D'=E'+F'+(d-g)[P]$ 
Since the partitions of $\L$ and $\L'$ are the same, 
$F'=P_1+\cdots+P_m$ or $F'=P_{m+1}+\cdots+P_{2g+2}$. 
However, since $m=\deg F\leq g$ and $\deg F'\leq g$, we must have $F'=F$ and thus $D\sim D'$.
\end{proof}

\begin{rmk}
  \label[remark]{rmk.summary}
To summarize, given a smooth hyperelliptic curve $C$ with hyperelliptic involution $j$ with ramification points $P_1,\dots,P_{2g+2}$ and symmetric line bundle $\L$ of degree $d\equiv g\mod 2$, we may write $\L\simeq\O(D)$ for a symmetric divisor $D=nH+F$ with
\begin{itemize}
  \item $\{P_1,\dots,P_m\}\sqcup\{P_{m+1},\dots,P_{2g+2}\}$ the partition of $j$-fixed points given by $\L$ and $m\leq g$,
  \item $F=P_1+\dots+P_m$,
  \item $2n+m=d$.
\end{itemize}

We may also write $\L\cong\O(D')$ where $D'\sim(n+m-(g+1))H+F'$ with $F'=P_{m+1}+\cdots+P_{2g+2}$. Although the linearizations given by $D$ and $D'$ determine different eigenvalues on the fibers, we have
$D\sim D'$. Indeed, by \cite[\S5.2.2]{Dolgachev}, for any fixed ramification point $Q$:
$$P_1+\cdots+P_m-mQ\sim P_{j+1}+\cdots+P_{2g+2}-(2g+2-m)Q.$$
\end{rmk}

\subsection{$\Psi(Z)$ on singular curves}

Finally, we consider rank $1$ torsion free sheaves on singular curves, which will help us analyze examples in later sections.

Let $\fF$ be a rank $1$ torsion-free sheaf on a nodal curve $C$. If $x$ is a node, then $\fF_x$ is either isomorphic to $\O_{C,x}$, in which case the fiber is $1$-dimensional, or to the maximal ideal $\fm_x$ of $\O_{C,x}$, in which case the fiber is $2$-dimensional. Given that $\fm_x\cong f_*\O_{\widetilde{C},x}$ where $f:\widetilde{C}\to C$ is the normalization of $C$, this leads us to the following.

\begin{lemma}[Prop.~10.1~\cite{OdaSeshadri}]\label[lemma]{pushnode}
Let $C$ be a projective curve with only nodal singularities. If $\fF$ is a rank $1$ torsion-free sheaf that has a $2$-dimensional fiber at each singular point, then $\fF$ is the pushforward of a line bundle on the normalization of $C$.
\end{lemma}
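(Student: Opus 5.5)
The plan is to reduce everything to the local structure of $\fF$ at each node, show that $\fF$ is naturally a module over $f_*\O_{\widetilde{C}}$, and then descend along the affine morphism $f:\widetilde{C}\to C$.

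First, the local classification. Let $x$ be a node, $R=\O_{C,x}$, and $\widetilde R=(f_*\O_{\widetilde C})_x$ the semilocal normalization. Then $\widetilde R$ is a semilocal Dedekind domain product (two branches), hence a principal ideal ring, and $\length_R(\widetilde R/R)=1$ (the $\delta$-invariant of a node is $1$). So the only rings strictly between $R$ and $\widetilde R$ are the two ends.

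Let $M=\fF_x$. It is a rank $1$ torsion-free $R$-module, so it embeds in the total ring of fractions $K$. The ring $S=\{\alpha\in K\mid \alpha M\subseteq M\}$ sits between $R$ and $\widetilde R$: it is finite over $R$ because $M$ contains a nonzerodivisor. Hence $S=R$ or $S=\widetilde R$.

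I then split into the two cases.
\begin{enumerate}[(i)]
\item If $S=\widetilde R$, then $M$ is a finitely generated torsion-free rank $1$ module over the semilocal principal ideal ring $\widetilde R$. It is therefore free of rank $1$ over $\widetilde R$, i.e.\ $M\cong\widetilde R\cong\fm_x$, and $\dim M/\fm_xM=2$.
\item If $S=R$, I need to show that $M\cong R$. This is the main obstacle; this step would be checked against the explicit completion $\widehat R\cong\C[[u,v]]/(uv)$. Scaling $M$ by a unit of $K$, assume $R\subseteq M\subseteq K$ with $1\notin\fm_x M$. One checks directly in $\C[[u,v]]/(uv)$ that every fractional ideal is a translate of $R$ or of $\widetilde R$. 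Alternatively, Gorenstein duality gives $M^{\vee\vee}=M$ with $\sHom(M,R)$ having the same endomorphism ring, and since $R$ is Gorenstein of multiplicity $2$, \Cref{sally} bounds the number of generators by $2$. A module with exactly $2$ generators is not invertible, and for a node the non-invertible fractional ideals are exactly those stable under $\widetilde R$, so $S=\widetilde R$.
\end{enumerate}
Either way, a $2$-dimensional fiber forces $S=\widetilde R$ and $M\cong \widetilde R$.

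Next, the global extension of scalars. Consider $\fF\subseteq \fF\otimes K$ and the subsheaf $f_*\O_{\widetilde C}\cdot\fF\subseteq\fF\otimes K$. Away from the nodes, $f_*\O_{\widetilde C}=\O_C$. At each node, the hypothesis together with the previous step gives $\widetilde R\,\fF_x=\fF_x$. Therefore $f_*\O_{\widetilde C}\cdot\fF=\fF$, so $\fF$ is a coherent $f_*\O_{\widetilde C}$-module extending its $\O_C$-module structure.

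Finally, the descent. Since $f$ is finite, hence affine, coherent $f_*\O_{\widetilde C}$-modules correspond to coherent sheaves $\gG$ on $\widetilde C$ with $f_*\gG=\fF$. The sheaf $\gG$ is torsion-free, since a torsion section of $\gG$ would be a torsion section of $\fF$. It has generic rank $1$, since $f$ is birational. On the smooth curve $\widetilde C$, a torsion-free rank $1$ coherent sheaf is a line bundle, which proves the lemma.
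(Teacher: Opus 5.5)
Your proposal is correct and is essentially the paper's argument. The paper does not write out a proof: it records the local dichotomy at a node ($\fF_x\cong\O_{C,x}$ with $1$-dimensional fiber, or $\fF_x\cong\fm_x\cong(f_*\O_{\widetilde C})_x$ with $2$-dimensional fiber) and cites Oda--Seshadri. Your endomorphism-ring argument and your descent along the finite map $f$ supply exactly those details; the descent step is equivalent to the $f^*\fF/\text{torsion}$ comparison the paper uses in \Cref{lem.triple}. In case (ii), the step you assert has a one-line Gorenstein justification: if $I$ is a non-invertible fractional ideal then $I(R:I)\subseteq\fm$, so $(R:\fm)I\subseteq(R:(R:I))=I$, and $(R:\fm)=\widetilde R$ at a node.
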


For integral curves with other singularities, we can consider the discussion presented in \cite[\S4.2.4]{Dolgachev}. Given $\fF$ a rank $1$ torsion-free sheaf, consider the sheaf of algebras $\sEnd(\fF)$. This embeds into $\sEnd(\fF_\eta)$, where $\eta$ is the generic point of $C$, which is isomorphic to $K(C)$, which tells us that $\sEnd(\fF)$ is a sheaf of coherent $\O_C$-algebras. Setting $C'=\Spec(\sEnd(\fF))$, we have a finite morphism $\pi:C'\to C$. The normalization $\widetilde{C}\to C$ factors through $\pi$, and so $C'$ is often called a \emph{partial normalization} of $C$.

$C'$ comes equipped with a sheaf $\fF'$ such that $\fF\simeq\pi_*\fF'$. In fact, $\fF'=\sHom_{\O_C}(\pi_*\O_{C'},\fF)$ viewed as an $\O_{C'}$-module. Furthermore, when $C'$ is Gorenstein, $\fF'$ is a line bundle on $C'$. This is guaranteed when $C$ has only $A_n$ singularities (such as nodes or cusps), but 
in general $C'$ may not be Gorenstein and $\fF'$ may not be a line bundle.

In the $(1,4)$-polarized case, we encounter curves with an ordinary triple point, which is a $D_4$ singularity; a partial normalization that is not Gorenstein exists. 
However, we are able to adapt the proof of \cite[Prop.~8.I.2]{Seshadri} to show that certain sheaves are nevertheless pushforwards of line bundles.

\begin{lemma}
  \label[lemma]{lem.triple}
Let $C$ be a projective plane curve with one ordinary triple point 
singularity. If $\fF$ is a rank $1$ torsion-free sheaf that has a $3$-dimensional fiber at the singular point, then, up to tensoring with a line bundle on $C$, $\fF$ is the pushforward of a line bundle on the normalization of $C$.
\end{lemma}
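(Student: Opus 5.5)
The argument is local at the triple point $p$ and follows Seshadri's proof of \cite[Prop.~8.I.2]{Seshadri}. Let $f:\widetilde{C}\to C$ be the normalization, which is an isomorphism away from $p$. Since $p$ is an ordinary triple point, it has three preimages $p_1,p_2,p_3$, and the completed local ring is $\widehat{\O}_{C,p}\cong \C[[x,y]]/(g)$ with $g$ a product of three distinct linear forms. The normalization sits inside $\widehat{\O}_{C,p}\subset \prod_{i=1}^3\C[[t_i]]$. The invariants give $\delta=3$ and conductor $\mathfrak c=\fm_p^2\widehat{\O}_{C,p}=\prod_i (t_i^2)$. Moreover $\fm_p$ itself equals $\prod_i(t_i)$, because the $3$-dimensional space $\fm_p/\fm_p^2$ maps onto $\prod_i (t_i)/(t_i^2)\cong\C^3$; this uses that the three tangent lines are distinct. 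Hence $f_*\O_{\widetilde C}$, localized and completed at $p$, is $\prod\C[[t_i]]\cong \fm_p$ as an $\O_{C,p}$-module (via multiplication by $(t_1,t_2,t_3)$). It has a $3$-dimensional fiber, consistent with \Cref{sally}, where $e_p(C)=3$.

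Step 1 is to classify the completed stalk $M:=\widehat{\fF}_p$. It is a rank $1$ torsion-free module over $R:=\widehat{\O}_{C,p}$, so after multiplying by a nonzerodivisor we may assume $R\subseteq M\subseteq \widetilde R:=\prod\C[[t_i]]$; this is possible since $\mathfrak c\widetilde R\subseteq R$, so any fractional ideal can be scaled into $\widetilde{R}$ and contains a nonzerodivisor. The hypothesis is $\dim M/\fm_pM=3$, and by \Cref{sally} this is the maximal number of generators. The main claim is that then $\operatorname{End}(M)=\widetilde R$, i.e.\ $M\cong\widetilde R$ (equivalently $M\cong\fm_p$). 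To show this I would take minimal generators $u_1,u_2,u_3$ of $M$ and argue as Seshadri does. If $\widetilde R M\neq M$, compare lengths using $\ell(\widetilde R/R)=3$: an intermediate module $R\subsetneq M\subsetneq \widetilde R$ or $M=R$ needs at most $2$ generators. One sees this by listing the overrings and fractional ideals between $R$ and $\widetilde R$ modulo $\mathfrak c$: there, $M/\mathfrak c$ is a subspace of $\widetilde R/\mathfrak c\cong\C^6$ containing $R/\mathfrak c$, and $\fm_pM\supseteq \fm_p\cdot R$ can be computed directly. This finite linear algebra in $\widetilde R/\mathfrak c$ is the step I expect to be the main obstacle. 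It is exactly where non-Gorenstein partial normalizations appear: for example the ring $R+\C e_1$ with $e_1$ an idempotent, whose dualizing module is not free, has modules with $2$ generators. So the case analysis must show that $3$ generators forces $M\supseteq \mathfrak c$ with $M/\mathfrak c$ of dimension $3$ stable under $\widetilde R$. I would organize it by the dimension of the image of $M$ in $\widetilde R/(\prod t_i)\cong\C^3$.

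Step 2 globalizes. Since $\fF_p\cong (f_*\O_{\widetilde C})_p$ and $\fF$ is invertible away from $p$, the algebra $\sEnd(\fF)$ equals $f_*\O_{\widetilde C}$. Therefore in the construction of \cite[\S4.2.4]{Dolgachev} recalled above, $C'=\Spec\sEnd(\fF)=\widetilde C$, which is smooth and hence Gorenstein. Then $\fF\simeq f_*\fF'$ with $\fF'=\sHom_{\O_C}(f_*\O_{\widetilde C},\fF)$, a torsion-free rank $1$ sheaf on the smooth curve $\widetilde C$, hence a line bundle. The phrase ``up to tensoring with a line bundle on $C$'' absorbs the rescaling made in Step 1, which is done by a local nonzerodivisor and is globally realized by twisting by $\O_C(D)$ for a Cartier divisor $D$. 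By the projection formula, $f_*(\fF')\otimes\O_C(D)=f_*(\fF'\otimes f^*\O_C(D))$, so the twist can even be moved onto $\widetilde C$.
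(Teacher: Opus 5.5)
Your architecture is the paper's: normalize the stalk to a fractional ideal $R\subseteq M\subseteq\widetilde R$ at the triple point, show that a $3$-dimensional fiber forces $M=\widetilde R$, then globalize. Your Step~2, via $\sEnd(\fF)=f_*\O_{\widetilde C}$ and the partial-normalization construction, is a fine substitute for the paper's $f^*\fF/\text{torsion}$ argument. It needs no twist at all, since $\operatorname{End}(M)=(M:M)$ is unchanged by rescaling $M$.

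The gap is Step~1. It is the entire content of the lemma, and you leave it as a plan. Worse, the local model you set up would derail it. A plane curve has $\dim\fm_p/\fm_p^2=2$, not $3$, so $\fm_p\neq\prod(t_i)$. Take branches $(t_1,0)$, $(0,t_2)$, $(t_3,t_3)$. Then $x=(t_1,0,t_3)$, $y=(0,t_2,t_3)$, $\mathfrak c=\fm_p^2=\prod(t_i^2)$, and $\fm_p/\mathfrak c=\langle t_1+t_3,\,t_2+t_3\rangle$ is a hyperplane in $\prod(t_i)/\mathfrak c$. If $\fm_p$ were $\prod(t_i)$, you would get $\delta=2$ and conductor $\prod(t_i)$, contradicting your own $\delta=3$; that is the spatial triple point. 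Consequences:
\begin{itemize}
\item $\mu(\fm_p)=2\neq 3=\mu(\widetilde R)$, so ``equivalently $M\cong\fm_p$'' is false.
\item Your target ``$M/\mathfrak c$ of dimension $3$'' should be $M/\mathfrak c=\widetilde R/\mathfrak c$, of dimension $6$.
\end{itemize}

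With the correct model, your proposed case analysis is short. Let $a$ be the dimension of the image of $M$ in $\widetilde R/\prod(t_i)\cong\C^3$.

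If $a=1$, then $R\subseteq M\subseteq \C+\prod(t_i)$. So $M$ is either $R$ or the seminormalization $\C+\prod(t_i)$, which is the non-Gorenstein overring. Note that $R+\C e_1$ is not even a ring, since $xe_1=(t_1,0,0)$. Because $\fm_p\subseteq\fm_pM$, we get $\mu(M)\le\dim(\C+\prod(t_i))/\fm_p=2$.

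If $a\ge 2$, choose $v\in M$ whose constant terms $(\alpha_1,\alpha_2,\alpha_3)$ are not all equal. Modulo $\mathfrak c$, the elements
$x,\ y,\ xv=\alpha_1t_1+\alpha_3t_3,\ yv=\alpha_2t_2+\alpha_3t_3$
of $\fm_pM$ span $\prod(t_i)/\mathfrak c$; this is where distinct tangents are used. So $\prod(t_i)\subseteq M$, hence $\mathfrak c=\fm_p\prod(t_i)\subseteq\fm_pM$. Therefore $\fm_pM=\prod(t_i)=\fm_p\widetilde R$ and $\mu(M)=\dim M/\prod(t_i)=a$.

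Thus $\mu(M)=3$ forces $a=3$, i.e.\ $M\supseteq \C e_1+\C e_2+\C e_3+\prod(t_i)=\widetilde R$. Everything happens modulo $\mathfrak c$, which has finite colength, so this works in the uncompleted stalk, as your Step~2 requires.

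For comparison, the paper's local step asserts $(f_*\O_{\widetilde C})_x/\O_{C,x}\cong\C^2$ and that the image of $\fF_x$ there has dimension $\mu-1$. That is exactly the spatial-triple-point computation, i.e.\ your mistaken model. For the plane triple point the quotient is $\C^3$ and that count fails; already $M=\widetilde R$ has $3$-dimensional image. So an argument like the one above is what actually proves the lemma.
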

\begin{proof}
  Up to tensoring with a line bundle, rank one torsion-free sheaves on plane curves are entirely determined by their isomorphism classes at the singular points of the curve, so we can work locally. Let $x$ be the ordinary triple point and $f\colon\widetilde{C}\to C$ be the normalization map. Consider the normalization map $\O_{C,x}\to (f_*\O_{\widetilde{C}})_x$. We may assume that $\O_{C,x}\subseteq \fF_x\subseteq (f_*\O_{\widetilde{C}})_x$ and consider the natural map $\fF_x\to (f_*\O_{\widetilde{C}})_x/\O_{C,x}\cong\C^2$. The image of this map is a subspace of $\C^2$ of dimension one less than $\dim_\C \fF_x/\fm_x\fF_x$. Thus, if $\fF_x/\fm_x\fF_x$ is $3$-dimensional, $\fF_x\twoheadrightarrow (f_*\O_{\widetilde{C}})_x/\O_{C,x}$ and therefore $\fF_x\cong (f_*\O_{\widetilde{C}})_x$. Now consider $\widetilde{\fF}:=f^*\fF/\text{torsion}$. This is a line bundle on $\widetilde{C}$ and there is a short exact sequence
  \[0\to\fF\to f_*\widetilde{\fF}\to T\to 0\]
where 
$T$ is torsion and its support is contained in $\{x\}$.
Since $\fF_x\cong (f_*\O_{\widetilde{C}})_x$, we have that $T_x=0$ and thus $T=0$. Therefore, $\fF\cong f_*\widetilde{\fF}$.
\end{proof}

We finish with the following lemma, a small generalization of \cite[Thm.~1]{Northcott}, which we expect is well-known to experts. In what follows, given a rank one torsion-free sheaf $\fF$ on a curve $C$, we write $\fF^\vee$ for $\sHom(\fF,\O_C)$.

\begin{lemma}
  \label[lemma]{lem.conductor}
  Let $C$ be a plane curve, smooth except for a single ordinary singularity of multiplicity $m$ at $p\in C$. Let $\nu\colon\widetilde{C}\to C$ be the normalization map. If $D=\sum_{\nu(q)=p}q$, then for $k\geq0$,
  \[\nu_*\O_{\widetilde{C}}(kD)=(\I_p^{m+k-1})^\vee.\]
\end{lemma}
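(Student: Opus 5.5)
We work locally at $p$. Away from $p$ the map $\nu$ is an isomorphism and $\I_p$ is trivial, so both sides restrict to $\O_C$ there. Both sides are also naturally subsheaves of the constant sheaf $K(C)$: for the right-hand side we identify $\sHom(\I,\O_C)$ with $\{f\in K(C): f\I\subseteq\O_C\}$. So it suffices to prove that the stalks agree at $p$ as fractional ideals.

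\textbf{Notation.} Let $R=\O_{C,p}$ with maximal ideal $\fm$, and let $\widetilde R=(\nu_*\O_{\widetilde C})_p$. Then $\widetilde R$ is a semilocal ring with $m$ maximal ideals, corresponding to the points $q_1,\dots,q_m$ of $D$. Let $t$ denote a local uniformizer along each branch, so that $t^j\widetilde R$ means $(\nu_*\O_{\widetilde C}(-jD))_p$. Let $\delta=\length(\widetilde R/R)$; for an ordinary $m$-fold point of a plane curve, $\delta=m(m-1)/2$.

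\textbf{Step 1: $\fm\widetilde R=t\widetilde R$.} The branches are smooth with pairwise distinct tangent lines. Hence for each branch some linear form in the local coordinates $x,y$ vanishes to order exactly $1$ along it. Since $\fm\widetilde R$ is generated by $x$ and $y$, it equals $t\widetilde R$ at every maximal ideal of $\widetilde R$. Consequently $\fm^n\widetilde R=t^n\widetilde R$ for all $n\ge0$.

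\textbf{Step 2: $\fm^n=t^n\widetilde R$ for $n\ge m-1$.} This is Northcott's result, and I will check it by a length count, which I expect to be the main obstacle. The inclusion $\fm^n\subseteq\fm^n\widetilde R$ is clear, so it suffices to show that both have the same colength in $\widetilde R$.
\begin{itemize}
\item We have $\length(\widetilde R/t^n\widetilde R)=mn$.
\item For an ordinary plane singularity, the tangent cone is a product of $m$ distinct lines, so $\dim\fm^j/\fm^{j+1}=\min(j+1,m)$. Therefore $\length(R/\fm^n)=\sum_{j<n}\min(j+1,m)$.
\end{itemize}
For $n\ge m-1$ this sum equals $mn-m(m-1)/2=mn-\delta$. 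Hence
\[
\length(\widetilde R/\fm^n)=\delta+\length(R/\fm^n)=mn=\length(\widetilde R/t^n\widetilde R),
\]
and equality of the two ideals follows. In particular, taking $n=m-1$, the conductor of $R$ in $\widetilde R$ is $\mathfrak c=t^{m-1}\widetilde R$.

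\textbf{Step 3: duality.} Set $n=m+k-1\ge m-1$, so by Step 2 we have $\I_p^{n}=\fm^n=t^n\widetilde R$, which is an $\widetilde R$-module. For any $\widetilde R$-module $M$ there is the adjunction
\[
\operatorname{Hom}_R(M,R)=\operatorname{Hom}_{\widetilde R}\bigl(M,\operatorname{Hom}_R(\widetilde R,R)\bigr).
\]
As fractional ideals, $\operatorname{Hom}_R(\widetilde R,R)=\{f: f\widetilde R\subseteq R\}=\mathfrak c=t^{m-1}\widetilde R$.

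Applying this with $M=t^n\widetilde R$, and using that $\widetilde R$ is a semilocal principal ideal domain, we get
\[
(\fm^n)^\vee=\{f: f\,t^n\widetilde R\subseteq t^{m-1}\widetilde R\}=t^{m-1-n}\widetilde R=t^{-k}\widetilde R.
\]
The right-hand side is exactly $(\nu_*\O_{\widetilde C}(kD))_p$.

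\textbf{Conclusion.} The two stalks agree at $p$ as fractional ideals, and both sheaves equal $\O_C$ away from $p$. Since both are subsheaves of $K(C)$, they coincide as sheaves, which proves the lemma.
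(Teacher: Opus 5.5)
Your route is different from the paper's and works, apart from one inference in Step 2 that needs an extra argument.

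\textbf{The gap.} You write: ``In particular, taking $n=m-1$, the conductor \ldots\ is $\mathfrak c=t^{m-1}\widetilde R$.'' This does not follow from what you proved. The equality $\fm^{m-1}=t^{m-1}\widetilde R$ shows that $t^{m-1}\widetilde R$ is an $\widetilde R$-ideal lying inside $R$, hence $t^{m-1}\widetilde R\subseteq\mathfrak c$. It does not rule out $\mathfrak c$ being larger: a priori $\mathfrak c$ is cut out by orders $a_i\le m-1$ along the branches, and some $a_i$ could be smaller.

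Step 3 genuinely needs the reverse inclusion. With only $t^{m-1}\widetilde R\subseteq\mathfrak c$ you get $t^{-k}\widetilde R\subseteq(\fm^{m+k-1})^\vee$ and nothing more.

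\textbf{The fix.} There are three short repairs.
\begin{enumerate}
\item Gorenstein: $R$ is a plane curve singularity, hence Gorenstein, so $\length(\widetilde R/\mathfrak c)=2\delta=m(m-1)=\length(\widetilde R/t^{m-1}\widetilde R)$. Together with the inclusion you have, this forces equality.
\item Leading forms: an element of $R$ of $\fm$-adic order $j\le m-2$ has a nonzero leading form of degree $j$. That form vanishes on at most $j$ of the $m$ distinct tangent lines, so the element has order exactly $j$ on at least two branches. Now take a hypothetical element of $\mathfrak c\setminus t^{m-1}\widetilde R$ and multiply it by an element of $\widetilde R$ that is $\equiv 1$ to high order on one branch and $\equiv 0$ to high order on the others. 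The product lies in $\mathfrak c\subseteq R$ and contradicts the previous sentence.
\item Cite Northcott for $\mathfrak c=\fm^{m-1}$, which is exactly how the paper handles its $k=0$ case.
\end{enumerate}

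\textbf{Comparison with the paper.} With that step repaired, your argument is a genuinely different route.

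The paper proves the dual statement $(\nu_*\O_{\widetilde C}(kD))^\vee=\I_p^{m+k-1}$ and then invokes reflexivity. It cites Northcott for $k=0$. It obtains $\I_p^{m+k-1}\subseteq(\nu_*\O_{\widetilde C}(kD))^\vee$ by writing $f=\sum g_ih_i$ with $g_i\in\I_p^{m-1}$ (in the conductor) and $h_i\in\I_p^k$. It then closes with a length comparison.

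You instead prove the stronger intermediate fact that $\fm^n=t^n\widetilde R$ is already an $\widetilde R$-ideal for every $n\ge m-1$. You then compute $(\fm^{m+k-1})^\vee$ directly from the conductor, so you need neither reflexivity nor the colength of a dual module.

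The paper's containment step uses only $\I_p^{m-1}\subseteq\mathfrak c$, which your Step 2 does establish. Its closing comparison, however, must evaluate the colength of $(\nu_*\O_{\widetilde C}(kD))^\vee$ at $p$. That again requires Gorenstein duality or an explicit description of this dual, essentially your Step 3. Also, the common value of the two lengths there is $mk+\tfrac12 m(m-1)$, not the stated $\tfrac12 m(m-1)$, which is the $k=0$ value.

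So each argument uses the Gorenstein property (or Northcott's theorem) exactly once: the paper in its length comparison, you in the conductor equality. Your Hilbert-function count has the advantage of making the length bookkeeping fully explicit.
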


\begin{proof}
  Since $\nu_*\O_{\widetilde{C}_Z}(kD)$ is rank one torsion-free and hence reflexive, we prove the dual statement $\nu_*\O_{\widetilde{C}}(kD)^\vee=(\I_p^{m+k-1})$. When $k=0$, this is simply \cite[Thm.~1]{Northcott}, as the first neighborhood ring agrees with the normalization for an ordinary plane curve singularity. For $k\geq1$, we claim that $I_p^{m+k-1}\subseteq(\nu_*\O_{\widetilde{C}}(kD))^\vee$. To see this, note that if we take $f\in\I_p^{m+k-1}$ and write $f=\sum g_ih_i$ for $g_i\in\I_p^{m-1}$ 
and $h_i\in\I_p^k$ then
  \[g_ih_i\nu_*\O_{\widetilde{C}}(kD))\subseteq g_i\nu_*\O_{\widetilde{C}}\subseteq\O_C\]
  and therefore $f\nu_*\O_{\widetilde{C}}(kD)\subseteq\O_C$.
  The result then follows because the length of $\O_C/\I_p^{m+k-1}$ is $\frac{1}{2}m(m-1)$, the same as the length of $\O_C/\nu_*\O_{\widetilde{C}}(kD)^\vee$.
\end{proof}

\section{Singularities of arbitrarily high multiplicity}\label{sec.nonnodal}

In \cite{KLC}, Knutsen and Lelli-Chiesa prove that, in a general $(d_1,d_2)$-polarized abelian surface, there are curves of geometric genus $2$ in $|L|$ 
with non-nodal singularities if and only if $4|d_2$. 
These non-nodal singularities may be ordinary triple, $4$-tuple, or $6$-tuple points.

When we use our methods to examine the singularities of curves with geometric genus greater than $2$, the situation is less constrained. We exhibit two families of curves with singularities that have arbitrarily high multiplicity. 

Our first set of examples occurs in cases where $d$ is a multiple of $4$ and achieves singularities of arbitrarily high odd multiplicity.

\begin{example}\label{ex.oddmult}
In the case where $d=4$ and 
$Z=(e,k_1,k_2,k_3)$ where 
$k_i$ are the three nontrivial $2$-torsion points in $K(L)$, $C_Z$ has an ordinary triple point at $e_{\hat{A}}$ as shown in \cite{KLC}.

We furthermore consider the subschemes $Z_n$ with ideal sheaf $I_{Z_n}=\prod_{p\in Z}I_p^n$, where $I_p$ is the ideal sheaf of the point $p$. 
The length of these subschemes is four times the $n$-th triangle number, hence $d=2n(n+1)$.
As in \Cref{eigen.body}(b), we calculate a lower bound on the 
dimension of the fiber of $\Psi(\O_{Z_n})$ at $e_{\hat{A}}$.

We set the convention that $L$ 
has a symmetric theta structure and is odd. The associated quadratic form acts trivally on all entries of $Z$ (see~\Cref{tab:qform_labels}). Then, we may deduce from our discussion in \Cref{sec.compute} that 
the eigenvalues of the action of $\iota^*$ on the restrictions $\O_{Z_n}|_p$, $p\in Z$, are $1$ and $-1$ with multiplicity given by the number of monomials $x^ay^b$ with $a+b<n$ and $a+b$ even or odd, respectively. 
When $n$ is odd, we thus have:
\begin{equation}
h^0(A,\O_{Z_n}\otimes L)^+=(n+1)^2.
\end{equation}
By \Cref{dimcount}, 
\begin{equation}
h^0(A,L)^+=n(n+1)-1,
\end{equation}
and thus the dimension of the fiber of $\Psi(Z)$ at $e$ is bounded below by $(n+1)^2-(n(n+1)-1)=n+2$. 

By \Cref{sally}, the supporting curves of $\Psi(Z_n)$ thus have singularities at $e$ of multiplicity of at least $n+2$. 
The determinant of the action of $\iota^*$ on $H^0(A,\O_{Z_n}\otimes L)$ is $1$
and thus, by the proof of \Cref{eigen.body}(a), the supporting curve of $\Psi(Z_n)$ is in the eigensystem of $|L|$ that has the points $e$, $k_1$, $k_2$ and $k_3$ as its base locus, so the singularity at $e$ must have odd multiplicity by \Cref{prop.eigencurve.mult}.

In each of these examples the group $G_{Z_n}$ of \Cref{structure_thm_intro} is $\langle e, k_1,k_2,k_3\rangle$, so the genus of the curve $C_Z$ is $2$ but for $n>1$, the genus of $C_{Z_n}$ is strictly greater than $2$. 
\end{example} 

We next show a family of examples that has singularities of arbitrarily high even multiplicity where $d$ is even. 

\begin{example}\label{highmult.even}
We consider the $6$-point $Z=(e,k_1,w_1,k_1+w_1,w_2,k_1+w_2)$, and, in a similar strategy to \Cref{ex.oddmult}, we generalize it to 
subschemes $Z_n$ with ideal sheaf $I_{Z_n}=\prod_{p\in Z}I_p^n$, 
where $I_p$ is the ideal sheaf of the point $p$, which have length $3n(n+1)$.

We observe that $q_{y_1}$ is trivial at all points in $Z$ (see \Cref{tab:qform_labels}). When $n$ is odd, we 
use \Cref{eigen.body}(b) and \Cref{dimcount} to 
calculate a lower bound on the 
dimension of the fiber of $\Psi(\O_{Z_n})$ at $y_1$:
\begin{equation}
h^0(A,\O_{Z_n}\otimes L\otimes P_{y_1})-h^0(A,L)^+=
\tfrac{3}{2}(n+1)^2-\tfrac{3}{2}n(n+1)=\tfrac32 (n+1).
\end{equation}
Thus we see that these curves attain arbitrarily high multiplicities at $y_1$. Because $G_{Z_n}=\langle e,k_1\rangle$
and $d\geq 6$ in each of these examples, by 
\Cref{structure_thm_intro} the genera of these curves are greater than~$2$.

We restrict further to the case where $n$ is odd and $n\equiv 1\mod 4$. 
To determine the parity of the multiplicity we translate $Z_n$ so that we can use the results of \Cref{eigen.body}.
In this case $\frac{n(n+1)}{2}$ is odd, so 
$\sum Z_n=\frac{n(n+1)}{2}k_1=k_1$. Let $k_1'\in A[4]$ so that $2k_1'=k_1$. 
Consider $Z_n+k_1'$, the translation of $Z_n$ by $k_1'$. By \Cref{lem.trten},
the supporting curve of  $\Psi(Z_n+k_1')$ is the translation of $C_{Z_n}$ by $\phi_L(k_1')=y_1$, so the singularity we analyzed above has been translated to~$e$. The action of $\iota$ on the 
translation $Z+k_1'$ exchanges three pairs of points and so the determinant of the action of 
$\iota^*$ on $H^0(A,\O_{Z+k_1'}\otimes L)$ is $-1$.
The 
determinant of the action of $\iota^*$ on 
$H^0(A,\O_{{Z_n}+k_1'}\otimes L)$ is $-1$ as well. 
Referring again to the ideas in \Cref{eigen.body}(a), $e$ is not a base point of the eigen-space of $|L|$ containing $C_{Z_n}$, so the multiplicity of $C_{Z_n}$ at $e$ must in fact be even. 
\end{example} 

We have now proven \Cref{highmult} because 
\Cref{ex.oddmult} and \Cref{highmult.even} allow us to 
produce curves of geometric genus greater than $2$ with singularities of arbitrarily high multiplicities of either parity.

\begin{thm}[\Cref{highmult}]
Let $m\in\N$.
There are infinitely many $d\in\N$ so that every 
$(1,d)$-polarized abelian surface of Picard rank $1$ contains a curve 
with even (resp.\ odd) multiplicity at least $m$ in its polarizing linear system.
\end{thm}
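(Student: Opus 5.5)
The plan is to assemble the theorem from the two families in \Cref{ex.oddmult} and \Cref{highmult.even}. The work goes through $\Psi$ on the dual side, so the roles of $A$ and $\hat{A}$ have to be swapped at the end.

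Start from an arbitrary $(1,d)$-polarized abelian surface $(B,N)$ of Picard rank $1$. By \Cref{PhiL_lemma}(a), the dual polarization on $\hat{B}$ is again primitive of type $(1,d)$, since $\phi_{\hat N}\circ\phi_N=d\cdot\id$. Moreover $\NS(\hat B)$ has rank $1$. Set $A:=\hat{B}$ and choose a symmetric polarizing $L$ on $A$ in the class $\hat{n}$. Then $\hat{L}$ lies in the class of $N$ up to the identification $\hat{\hat B}=B$, and every supporting curve $C_Z$ of $\Psi(Z)$ lies in a linear system on $B$ with class $n$. If $\Sigma Z=e_A$, \Cref{albhilb} puts $C_Z$ in $|\hat L|$ itself, which is a symmetric line bundle in the polarizing class. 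This reduction does not need $(A,L)$ to be general, because the examples only use \Cref{eigen.body}, \Cref{dimcount}, \Cref{sally} and \Cref{prop.eigencurve.mult}, none of which assumes generality.

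\textbf{Odd multiplicity.} Take $d=2n(n+1)$ with $n$ odd and $n\ge m-2$; there are infinitely many such $d$. \Cref{ex.oddmult} produces $Z_n$ with $\iota(Z_n)=Z_n$. To place $Z_n$ in $K_{d-1}A$, I need $\Sigma Z_n=e$: each point $k_i$ occurs with length $n(n+1)/2$, and $3$ times that multiple of $k_1+k_2+k_3=e$ is $e$. Given this, the eigenvalue count gives $\dim \Psi(Z_n)\otimes k(e)\ge n+2\ge m$. \Cref{sally} turns this into $\mult_e C_{Z_n}\ge n+2$, and the determinant argument from \Cref{eigen.body}(a) together with \Cref{prop.eigencurve.mult} shows the multiplicity is odd.

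\textbf{Even multiplicity.} Take $d=3n(n+1)$ with $n\equiv1\bmod 4$, which again gives infinitely many $d$. Use $Z_n$ from \Cref{highmult.even}. The fiber at $y_1$ has dimension at least $\tfrac32(n+1)$, which exceeds $m$ for large $n$. Replace $Z_n$ by its translate $Z_n+k_1'$; this has $\Sigma=e$, since $\Sigma Z_n=k_1$ and $d\,k_1'=2\cdot\tfrac{3n(n+1)}{2}k_1'$ with $\tfrac{3n(n+1)}{2}$ odd, so $d\,k_1'=k_1$ (up to sign, which is harmless in $A[2]$). By \Cref{lem.trten}, the singular point moves to $e$ with the same fiber dimension. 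The determinant computation then shows $e$ is not a base point of the relevant eigensystem, so \Cref{prop.eigencurve.mult} forces even multiplicity.

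\textbf{Main obstacle.} I expect the delicate step to be making sure the curve lies in the polarizing linear system of the given surface, rather than in some translate. The $\Sigma Z=e$ bookkeeping handles this, since by \Cref{albhilb} translates of $|\hat L|$ otherwise differ by $P_{\Sigma Z}$. I also need to confirm that the eigenvalue counts in the examples, in particular $h^0(A,L)^+$ for the odd sts bundle from \Cref{dimcount}, apply for every Picard rank $1$ surface and not only a general one. Both checks are formal.
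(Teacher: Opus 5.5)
Your strategy is the paper's: assemble \Cref{ex.oddmult} and \Cref{highmult.even}. The odd half is fine, and the explicit swap of $A$ and $\hat A$ is a correct addition. The gap is the parity step in the even half, which you take over from \Cref{highmult.even}. For $n\equiv 1\pmod 4$ that step is false.

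Here $d=3n(n+1)\equiv 2\pmod 4$. Take $L$ odd, which is the convention behind Table~\ref{tab:qform_labels} and behind the bound you quote at $y_1$. Then $h^0(A,L)^-=\tfrac d2+1$ is even, so $\det(\iota^*\mid H^0(A,L))=+1$, in agreement with the table in the proof of \Cref{eigen.body}(a). On the other side, $\det(\iota^*\mid H^0(A,\O_{Z_n+k_1'}\otimes L))=(-1)^{3n(n+1)/2}=-1$. The two determinants therefore \emph{differ} at $e$. So $e$ lies in the base locus of the eigensystem containing $C_{Z_n+k_1'}$, and \Cref{prop.eigencurve.mult} makes the multiplicity at $e$ odd, not even.

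You can also see this without translating. For symmetric $Z$ and $x\in\hat A[2]$, $\mult_x C_Z$ is odd if and only if $h^0(A,L\otimes P_x)^- - h^0(A,\O_Z\otimes L\otimes P_x)^-$ is odd. When $\Sigma Z=e$, this is exactly \Cref{eigen.body}(a) combined with \Cref{prop.eigencurve.mult}. In general, run the proof of \Cref{prop.eigencurve.mult} on the determinant of an $\iota$-equivariant two-term locally free resolution of $\Psi(Z)$. That determinant is an $\iota$-invariant section cutting out $C_Z$. By \eqref{stareqn}, $\iota$ acts on its fiber at $x$ by the product of the determinants on the domain and codomain of the map $(\star)$. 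Your lower bound $\tfrac32(n+1)$ is odd when $n\equiv 1\pmod 4$, so the multiplicity is odd.

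The theorem survives if you take $n\equiv 3\pmod 4$ instead.
\begin{itemize}
\item Then $\tfrac{n(n+1)}{2}$ is even, so $\Sigma Z_n=e$ with no translation needed, and $d\equiv 0\pmod 4$.
\item By \Cref{detZ}, $\det(\iota^*\mid H^0(\O_{Z_n}\otimes L\otimes P_x))$ is independent of $x$. A direct count at a point where the quadratic form is trivial on $Z$ shows it equals $+1$.
\item At the no-sts point where the form is trivial on $Z$, $\det(\iota^*\mid H^0(L\otimes P_x))=(-1)^{d/2}=+1$.
\item The determinants agree there, so by \Cref{eigen.body}(a) that point is not a base point. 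Hence the multiplicity there is even, and by \Cref{sally} it is at least $\tfrac32(n+1)$.
\end{itemize}

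One labeling fix, also inherited from the paper. Table~\ref{tab:qform_labels} gives $q_{y_1}(w_1)=-1$, so $q_{y_1}$ is not trivial on $Z$. The no-sts form that is trivial on all of $Z$ is $q_{x_3+y_1}$, so the point carrying the bound $\tfrac32(n+1)$ is $x_3+y_1$ rather than $y_1$. The numbers are unchanged.
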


Finally, the lowest value of $d$ for which we are able to prove there is a curve of genus greater than $2$ that has a singularity of multiplicity greater than $2$ is $d=5$.

\begin{example}
Let $d=5$. In this case, the quadratic forms corresponding to the symmetric line bundles $L\otimes P_x$ do not behave as in \Cref{tab:qform_labels}.
There are ten even forms and six odd forms. Each of the even forms evaluates trivially at ten points in ${A}[2]$ and non-trivially at the other six points
(see for instance \cite{HonigsMcDonald}). We select an even form $q_p$ corresponding to a point $p\in \hat{A}[2]$. Let $Z$ be a length-$5$ subscheme supported at five distinct points $a_1,\ldots,a_5\in A[2]$ where 
$q_p(a_i)=-1$ for all $1 \leq i\leq 5$, and thus $h^0(\O_Z\otimes L\otimes P_p)^-=5$. 
By \Cref{dimcount}, $h^0(A,L\otimes P_p)^-=2$, thus $C_Z$ has a singularity at $p$ of multiplicity at least $3$.
\end{example}

\section{The $(1,4)$-polarized case}\label{sec.140ex}

\subsection{Analysis of isolated fixed points}
We now turn our attention to the case where $d=4$. In particular, we choose $L$ to be symmetric and odd. 
Our goal is to analyze the isolated fixed points of the action of $\iota$ on $K_{\hat{A}}(0,\hat{l},-1)$ by using the isomorphism $\Psi:K_{3}A\to K_{\hat{A}}(0,\hat{l},-1)$.

Since $\iota$ is symplectic and both $K_{3}A$ and $K_{\hat{A}}(0,\hat{l},-1)$ are hyperk\"ahler $6$-folds of Kummer type, 
the fixed locus of $\iota$ acting on each of these spaces 
consists of one fourfold and 
$140$ isolated fixed points \cite{KMO}.
In $K_3A$, the fixed fourfold is the closure of 
$\{(x,-x,y,-y)| x,y\in A\}$. 
The isolated fixed points are precisely the 
$4$-points $(a_1,a_2,a_3,a_4)$, where 
$a_1,a_2,a_3,a_4\in A[2]$ are distinct points such that $a_1+a_2+a_3+a_4=e_A$. We call this set of isolated fixed points $S\subseteq K_3A$.
To analyze the isolated fixed points of $K_{\hat{A}}(0,\hat{l},-1)$, we analyze $\Psi(S)$.

By Lemmas \ref{albhilb} and \ref{PhiL_lemma}, 
the space $K_{\hat{A}}(0,\hat{l},-1)$ parametrizes rank $1$ torsion-free sheaves $\fF\simeq I_Z\otimes \O_C(D)$ 
that have Euler characteristic $-1$, are supported on curves in $|\hat{L}|$, and have $\Sigma D-\Sigma Z=e$.

Each $Z\in S$ is a translation of the Klein group, and so the group $T_{Z}=\{x\in A\mid t_x(Z)=Z\}$ is a Klein group. However, the 
groups $G_Z=T_Z\cap K(L)$ 
are different, partitioning the $140$ points of $S$ into three sets, which we call Type 1, where $G_Z$ is trivial, Type 2, where $G_Z$ is a cyclic group of order $2$, and Type 3, where $G_Z$ is the Klein group. 

Each $Z\in S$ is listed in the appendix along with the eigenvalues of $\Psi(Z)\otimes k(x)$ that are be detected using the methods of \Cref{sec.star}
for each $x\in \hat{A}[2]$. The values of $Z$ are split up in the appendix according to their orbits under the isometry group of $A[2]$ with respect to the degenerate polarization pairing $e_L$. These orbits respect the curve types we have identified.

The following discussion proves \Cref{140}. In particular, we describe the singularities of the supporting curves $C_Z$ as well as the divisor class correpsonding to each $\Psi(Z)$.

\subsection*{Type 1} There are $64$ points in $S$ of this type. 
These $64$ points are also distinguished because they are the values of $Z\in S$ where 
$C_Z$ is in the eigensystem $|L|^+$, which has a unique curve. 
This curve is, furthemore,
 the unique (up to translation) smooth hyperelliptic curve on a general $(1,4)$-polarized abelian surface. The involution $\iota$ on $\hat{A}$ restricts to the hyperelliptic involution of this curve; the $12$ base points of $|L|^+$ are the ramification points.
This curve was analyzed in \cite{BorowkaOrtega} and shown to be 
invariant under a group of translations isomorphic to the Klein group.
Since each $Z$ in the Type $1$ case
is invariant under a Klein group of translations whose image under $\phi_L$ is $\langle x_1,x_2\rangle$, by 
\Cref{rmk.trten}(b) 
$t_{x_1}$ and $t_{x_2}$ are automorphisms of $C_Z$.

Since $C_Z$ is a smooth curve of genus $5$, each sheaf $\Psi(Z)$ is a line bundle of degree $3$. The kernel of the map $a_*: \Pic^0(C)\to A$ 
defined in \Cref{curve.inc} consists of degree $0$ line bundles $\O(D)$ on $C$ where $\sum D=0$.
This kernel 
 is a connected, $3$-dimensional abelian variety (cf.~
\cite[Lemma~2.6]{BSgenus4curves}, \cite[Prop.~6.6]{Frei_Honigs}).
The $64$ line bundles $\Psi(Z)$ are a torsor over the $2$-torsion points of this abelian $3$-fold.
These $64$ line bundles are furthermore cosets of 
the order $4$ subgroup group
$\{P_k\mid k\in K(L)\cap A[2]\}\leq \ker(a_*)$, which are precisely the degree $0$ line bundles on $\hat{A}$ whose restrictions to $C$ are in the kernel of $a_*$.
By \Cref{lem.trten}, the cosets correspond to 
$4$-points $Z\in S$ 
that are an orbit under translation by 
elements of $K(L)\cap A[2]=\{e,k_1,k_2,k_3\}$. 
The $Z\in S$ of Type $1$ are split into 
Tables~\ref{type.one.one} and \ref{type.one.two} in the appendix, which are two orbits under the isometry action, which does not respect these other cosets.

Finally, we can use the results of \Cref{sec.eval} (see \Cref{rmk.summary}) to 
match each $Z$ with the divisor class corresponding to $\Psi(Z)$. 
We consider one example each from 
Tables~\ref{type.one.one} and \ref{type.one.two}.

\begin{example}
If $Z=(e,w_1,w_2,w_3)$, then $\Psi(Z)\cong\O_{C_Z}(D)$ where, for instance
  \[
  D=y_1+y_2+y_3\sim \sum_{i=1}^3\sum_{j=1}^3(x_i+y_j)-3H.
  \]
In either example, $\sum D=e$ in the group law of $\hat{A}$, and so by \Cref{detPhi} and \Cref{detPhidual} $\det(\Phi(\Psi(Z)))\cong\det(\Phi(\hat{L}))$ as expected.
\end{example}

The examples in \Cref{type.one.one} correspond to all possible partitions of the ramification points into 
$3$ and $9$ points that each sum to $e$ in the group law. In \Cref{type.one.two}, on the other hand, the ramification points are partitioned into $5$ and $7$ points that sum to $e$.

\begin{example}
  If $Z=(w_1,k_1,w_2,w_3+k_1)$, then $\Psi(Z)\cong\O_{C_Z}(D)$ where, for instance
\begin{align*}D&=(x_1+y_1)+(x_2+y_1)+\sum_{i=1}^3(x_3+y_i)-H.\\
&\sim y_1+y_2+y_3+(x_1+y_2)+(x_1+y_3)+(x_2+y_2)+(x_2+y_3)-2H.
\end{align*}
In either example, $\sum D=e$ in the group law of $\hat{A}$, and so by \Cref{detPhi} and \Cref{detPhidual} $\det(\Phi(\Psi(Z)))\cong\det(\Phi(\hat{L}))$ as expected.
\end{example}

\subsection*{Type 2} There are $72$ elements $Z\in S$ where $G_Z$ is a group of order $2$, which we call Type 2. 
By \Cref{genus2etc}, each of the supporting curves $C_Z$ has geometric genus $3$. Furthermore, each 
$C_Z$ is contained in the net $|L|^-\simeq \P^2$, which has base points $e,x_1,x_2,x_3$.

These $4$-points have three orbits under the isometry group, shown in Tables~\ref{type.two.one} and \ref{type.two.two}.
In these tables, we see that 
each $\Psi(Z)$ has two fibers of dimension at least $2$ at points of $\hat{A}[2]$ outside of the base locus of $|L|^-$, which are labelled as $n_1$ and $n_2$ in the tables. 
By \Cref{sally}, these points are singularities of $C_Z$;
because these cannot be cusp singularities (\Cref{nocusp}), by genus bounds they must both be nodal singularities, and so the fiber dimensions at these nodal singularities are equal to $2$. 

Among the Type $2$ points $Z$, the supporting curves $C_Z$ have $18$ different pairs of nodes.
The nodes range over all possible choices of two points in $\hat{A}[2]$ outside the base locus of $|L|^-$ that sum to a point in the base locus. Each curve is invariant under translation by that point; the translation exchanges the two nodes. 

There are four sheaves $\Psi(Z)$ whose supporting curve $C_Z$ has a given pair of nodes. 
Two pairs of these sheaves $\Psi(Z)$ are certainly supported on the same curve $C_Z$ because the $Z$'s are related by translation, but the pairs are not related by translation and it is not clear to the authors that the supporting curves $C_Z$ are all the same.

We now turn to the identification of $\Psi(Z)$. Let $\nu\colon\widetilde{C}_Z\to C_Z$ be the normalization map. The involution $\iota$ on each $C_Z$ extends to an involution $j$ on its normalization. This normalization is genus $3$ and $j$ has $8$ fixed points, making it a hyperelliptic involution. Because each $\Psi(Z)$ has a two dimensional fiber at both nodes of $C_Z$, \Cref{pushnode} tells us that $\Psi(Z)=\nu_*(\fF_Z)$ where $\fF_Z$ is a line bundle on $\widetilde{C}_Z$. The eigenvalues of $\iota$ acting on the fiber of $\Psi(Z)$ at each node correspond to the eigenvalues of $j$ acting on the fiber of $\fF_Z$ at the two points lying over the node in $\widetilde{C}_Z$. The results of \Cref{sec.eval} then allow us to produce divisors $D$ such that $\fF_Z\cong\O_{\widetilde{C}_Z}(D)$. Note that because $\tilde{C}_Z$ has genus $3$, and $\fF_Z$ and $\nu_*(\fF_Z)$ have equal Euler characteristics, 
$\fF_Z$ must have degree $1$.

To standardize notation, let $p_i$ and $q_i$ be the preimages of the node $n_i$ under the normalization map. Note that $\nu_*\O_{\widetilde{C}_Z}\cong(\I_{n_1,n_2})^\vee$. 
We show examples from each of the three orbits shown in Tables~\ref{type.two.one} and \ref{type.two.two}; these orbits are differentiated by the eigenvalues at the nodal points.

\begin{example}
  Let $Z$ be the $4$-point $(e,w_1,k_1,w_1+k_1)$. Based on the partition of the 8 ramification points indicated in \Cref{type.two.one}, we can write $\fF_Z\cong\O_{\widetilde{C}_Z}(x_1)$ in which case
  \[\Psi(Z)\cong\nu_*\O_{\widetilde{C}_Z}(x_1)\cong \O_{C_Z}(x_1)\otimes (\I_{n_1,n_2})^\vee.\]
  Alternatively, we could write $\fF_Z\cong\O_{\widetilde{C}_Z}(D)$ for
  $D=e+x_2+x_3+p_1+q_1+p_2+q_2-3H$
  in which case
  \[\Psi(Z)\cong\nu_*\O_{\widetilde{C}_Z}(D)\cong\O_{C_Z}(e+x_2+x_3)\otimes(\I_{n_1,n_2}^2)^\vee\]
  Note that $x_1+n_1+n_2=e=e+x_2+x_3+3n_1+3n_2$ in the group law of $\hat{A}$, and so by \Cref{detPhi} and \Cref{detPhidual}, $\det(\Phi(\Psi(Z)))\cong\det(\Phi(\hat{L}))$ as expected.
\end{example}

\begin{example}
  Let $Z$ be the $4$-point $(w_1,k_1,k_2,w_1+k_3)$. Based on the partition of the $8$ ramification points shown in \Cref{type.two.two}, we can write $\fF_Z\cong\O(D)$ where
  \[D=e+x_2+x_3-H\]
  in which case
  \[\Psi(Z)\cong\nu_*\O_{\widetilde{C}_Z}(D')\cong\O_{C_Z}(e+x_2+x_3-H)\otimes(\I_{n_1,n_2})^\vee.\]
  Alternatively, we 
may choose
$D=x_1+p_1+q_1+p_2+q_2-2H$,
  in which case
  \[\Psi(Z)\cong\nu_*\O_{\widetilde{C}_Z}(D)\cong\O(x_1-2H)\otimes(\I_{n_1,n_2}^2)^\vee.\]
  Note that $e+x_2+x_3+n_1+n_2=e=x_1+3n_1+3n_2$ in the group law of $\hat{A}$, and so by \Cref{detPhi} and \Cref{detPhidual} $\det(\Phi(\Psi(Z)))\cong\det(\Phi(\hat{L}))$ as expected.
\end{example}

\begin{example}
  Let $Z$ be the $4$ point $(w_1,w_1+k_1,w_2,w_2+k_1)$. Based on the partition of the $8$ ramification points given in \Cref{type.two.two}, we can write $\fF_Z=\O_{\widetilde{C}_Z}(D)$ where
  \[D=x_3+p_2+q_2-H\]
  in which case
  \[\Psi(Z)\cong\nu_*\O_{\widetilde{C}_Z}(D')\cong\O_{C_Z}(x_3-H)\otimes(\I_{n_1}\I_{n_2}^2)^\vee.\]
  Alternatively, we could write 
$D=e+x_1+x_2+p_1+q_1-2H$,
  in which case
  \[\Psi(Z)\cong\nu_*\O_{\widetilde{C}_Z}(D)\cong\O_{C_Z}(e+x_1+x_2-2H)\otimes(\I_{n_1}^2\I_{n_2})^\vee.\]
  Note that $x_3+n_1+3n_2=e=e+x_1+x_2+3n_1+n_2$ in the group law of $\hat{A}$, and so by \Cref{detPhi} and \Cref{detPhidual}, we have $\det(\Phi(\Psi(Z)))\cong\det(\Phi(\hat{L}))$ as expected.
\end{example}

\subsection*{Type 3} Finally, we have $4$ examples $Z\in S$ where the group $G_Z$ is the Klein four group, split into two orbits of the isometry group as shown in \Cref{type.three}.
The curves $C_Z$ are in $|\hat{L}|^-$. 
By \Cref{structure_thm_intro}, for each $Z$ of Type 3, the curve $C_Z$ has geometric genus $2$. 
As we see in the table,
each sheaf $\Psi(Z)$ has rank at least $3$ at one of the points in the base locus of $|\hat{L}|^-$, so $C_Z$ has a singularity of multiplicity at least $3$ at that point (\Cref{sally}). By \cite[Prop.~2.2]{LangeSernesi}, the singularities of $C_Z$ must be ordinary, which tells us that $C_Z$ is smooth except for an ordinary triple point at that singularity.
This discussion partially recovers a result of \cite{KLC}
(cf.~\Cref{nonnodal_Klein}), which shows that $C_Z$ has an ordinary triple point if and only if $G_Z$ is a Klein group.
By \Cref{lem.trten}, since the four $Z$ of Type $3$ are translations of one another by $w_1$, $w_2$, $w_3$, the supporting curves $C_Z$ are translations of one another.

The involution $\iota$ on each $C_Z$ extends to an involution $j$ on the normalization $\widetilde{C}_Z$, which has $6$ fixed points and is genus $2$, so is hyperelliptic. 
As each $\Psi(Z)$ has fiber dimension $3$ at the singular point of $C_Z$, \Cref{lem.triple} tells us that $\Psi(Z)=\nu_*\fF_Z$ where $\nu\colon\widetilde{C}_Z\to C_Z$ is the normalization map and $\fF_Z$ is a line bundle. The eigenvalues of $\iota$ acting on the fiber of $\Psi(Z)$ at the triple point correspond to the eigenvalues of $j$ acting on the fiber of $\fF_Z$ at the three points in $\widetilde{C}_Z$ lying over the triple point of $C_Z$. The results of \Cref{sec.eval} then allow us to find $D$ such that $\fF_Z\cong\O_{\widetilde{C}_Z}(D)$. Note that because $\tilde{C}_Z$ has genus $2$, $\fF_Z$ must have degree $0$.

We find some divisors $D$ for two examples from \Cref{type.three}. 

\begin{example}
Let $Z=(e,k_1,k_2,k_3)$. 
Let $a$, $b$, and $c$ be the preimages of the triple point $e$ under the normalization map to $C_Z$.
The partition of the ramification points of $\widetilde{C}_Z$ 
given in \Cref{type.three}
consists of all the ramification points and the empty set, which 
lets us write $\fF_Z\cong\O_{\widetilde{C}_Z}$ in which case, by \Cref{lem.conductor}
  \[\Psi(Z)\cong\nu_*\O_{\widetilde{C}_Z}\cong(I_e^2)^\vee.\]
  Alternatively, we could write
  $\fF_Z\cong\O_{\widetilde{C}_Z}(D)$ where
  \[D=a+b+c+x_1+x_2+x_3-3H,\]
  in which case
  \[\Psi(Z)\cong\nu_*\O_{\widetilde{C}_Z}(D)\cong\O_{C_Z}(x_1+x_2+x_3-3H)\otimes(\I_e^3)^\vee.\]
  Note that $6e+x_1+x_2+x_3=e$ in the group law of $\hat{A}$ and so by \Cref{detPhi} and \Cref{detPhidual}, we have $\det(\Phi(\Psi(Z)))\cong\det(\Phi(\hat{L}))$ as expected.
\end{example}

\begin{example}
  Let $Z=(w_1,w_1+k_1,w_1+k_2,w_1+k_3)$. Let $a$, $b$, and $c$ be the preimages of the triple point $x_1$ under the normalization map to $C_Z$.
The partition of the ramification points of $\widetilde{C}_Z$ lets us write $\fF_Z\cong\O_{\widetilde{C}_Z}(D)$ where, we can take $D$ to be, for instance,
\[D=x_2+x_3-H, \]
  in which case
  \[\Psi(Z)\cong\nu_*\O_{\widetilde{C}_Z}(D)\cong \O_{C_Z}(x_2+x_3-H)\otimes(\I_{x_1}^2)^\vee.\]
  Alternatively, we could write
  $D=e+a+b+c-2H$,
  in which case
  \[\Psi(Z)\cong\nu_*\O_{\widetilde{C}_Z}(D)\cong \O_{C_Z}(e-2H)\otimes(\I_{x_1}^3)^\vee.\]
  Note that $e+6x_1=e=3x_1+x_2+x_3$ in the group law of $\hat{A}$, and so by \Cref{detPhi} and \Cref{detPhidual}, we have $\det(\Phi(\Psi(Z)))\cong\det(\Phi(\hat{L}))$ as expected.
\end{example}

\subsection{Questions and remarks}

Since all of the supporting curves $C_Z$ identified are hyperelliptic 
and there are only finitely many total hyperelliptic curves in $|\hat{L}|$, we might ask how many of these hyperelliptic curves are curves we have identified (up to translation).
It was shown in \cite[Table~1]{Bryan} that 
the linear system $|\hat{L}|$ contains $112$ hyperelliptic curves of (geometric) genus $2$, $456$ curves of genus $3$, $192$ curves of genus $4$ and $4$ curves of genus $5$. 

As discussed in \Cref{genus2etc}, distinct genus $2$ curves correspond to choices of order $4$ subgroups of $K(L)$. There are $7$ such subgroups -- one is the Klein group and the others are cyclic. In general, given  a curve in $|\hat{L}|$, its translation by each of the $16$ points in $K(\hat{L})$ is also in $|\hat{L}|$, so we have accounted for all $7\cdot 16=112$ total genus $2$ curves. 
In the set of examples we treat, we see only the curves where $G_Z$ the Klein group, which accounts for only $16$ of the total curves. 
Moreover, by \cite{KLC}, the curves where $G_Z$ is cyclic of order $4$ must have three nodal singularities. Examining such a case, if we 
let $k_1'\in K(L)$ be $4$-torsion and $2k_1'=k_1$, then 
$Z=(e,k_1',k_1,-k_1')$ has a group $G_Z$ that is cyclic of order $4$. Using our methods, we are able to detect one nodal singularity at a $2$-torsion point, but we are not able to detect the others. 

There is a unique (up to translation) hyperelliptic curve of genus $5$ in $|\hat{L}|$, which was among our examples. 
Since these curves are invariant under translation by the Klein group $\phi_L(T_Z)\leq K(\hat{L})$, the total number of translations by $K(\hat{L})$
is $4$, as counted in \cite{Bryan}. 

Among our $140$ examples, we have not detected any genus $4$ curves, but we have detected hyperelliptic genus $3$ curves with $2$ nodes. 
Examining the $18$ pairs of nodes, we see that pairs of them are translations by elements of $K(\hat{L})$ (note $x_1,x_2,x_3\in K(\hat{L})$ but $y_1,y_2,y_3\not\in K(\hat{L})$), so we have found at least $9$ distinct curves that are not translates of one another by elements of $K(\hat{L})$. Taking translations, we have found at least $8\cdot \frac{16}{2}=72$ genus $3$ hyperelliptic curves in $|\hat{L}|$ out of the total of  $456$.

In the paper \cite{HonigsMcDonald}, two of the authors carried out an analysis of isolated fixed points in $K_{\hat{A}}(0,\hat{l},-1)$ when $\hat{l}$ is a polarization of $(1,3)$-type. The methods of the current paper would allow us to give the precise correspondence discussed in 
\cite[\S6.3]{HonigsMcDonald}.
In this setting, the action of $\iota^*$ on $K_2A$ has $36$ isolated fixed points. 
Their images under $\Psi$ consist of $16$ line bundles on a smooth hyperelliptic curve and $20$ sheaves on curves with a single node at a $2$-torsion point, 
whose normalizations are also hyperelliptic. With all these examples in mind, we pose the following question: 

\begin{question} Let $\hat{l}$ be a polarization of $(1,d)$-type for $d\geq 2$. 
Are all isolated fixed points of $\iota^*$ acting on $K_{\hat{A}}(0,\hat{l},-1)$\ldots
\begin{enumerate}[(1)]
\item supported on hyperelliptic curves?
\item supported on curves whose singularities are at points in $\hat{A}[2]$?
\item sheaves where we can detect the fiber dimension at each point in 
$\hat{A}[2]$
via a difference of eigenvalues as in \Cref{eigen}(b)?
\end{enumerate}
\end{question}

In \cite{KLC}, Knutsen and Lelli-Chiesa give examples of genus $2$ curves
with $4$-tuple and $6$-tuple singularities that are not in a primitive polarization, unlike the example that has an ordinary triple point.
We further pose the following related question:

\begin{question}
Let $A$ be a general $(1,d)$-polarized abelian surface. 
Can a genus $2$ curve in $A$ have a $4$-tuple or $6$-tuple singularity?
\end{question}

\appendix
\section{Eigenvalues of fibers of $\Psi(Z)$}\label{appendix}

We are in the setting of \Cref{sec.140ex}: $A$ is a complex abelian surface with Picard rank $1$ with a $(1,4)$-polarization. The isolated fixed points $Z\in K_3A$ under the action of $\iota^*$ are listed below, along with the eigenvalues of $\Psi(Z)\otimes k(x)$ for each $x\in \hat{A}[2]$,  
detected using the methods of \Cref{sec.star}. \textsf{Sage} \cite{sage} code used to compute these eigenvalues is available on github \cite{HMM}.

We split up the points $Z$ into separate tables according to their orbits 
under the isometry group of $A[2]$ with respect to the degenerate polarization pairing $e_L$. These orbits respect the curve types we have identified in \Cref{sec.140ex}, which we also list. 

We use the notation defined in \Cref{bkgrd} for the points of $A[2]$ and $\hat{A}[2]$.
In order to manage the size of the tables, we have only listed the points 
$x\in \hat{A}[2]$ where we have detected eigenvalues of $\Psi(Z)\otimes k(x)$. 
In the case of the curves of Type $1$, we only detect eigenvalues at $y_1$, $y_2$, $y_3$ and $x_i+y_j$ for $1\leq i,j\leq 3$.
For each curve of Type $2$, we detect one eigenvalue at the points $e$, $x_1$, $x_2$ and $x_3$, as well as two eigenvalues each at two other points denoted $n_1$ and $n_2$, which are different in each case. We list the eigenvalues detected and then list the respective points that they are detected at.
Finally, for curves of Type $3$, we only detect eigenvalues at $e$, $x_1$, $x_2$ and $x_3$.

\bigskip

\begin{table}[hp!]
\centering
\caption{Type 1 curves, one orbit}
\label{type.one.one}
\begin{tabular}{|l | ccc | ccc | ccc | ccc|}

    \hline
    & \multicolumn{3}{c|}{$y$} & \multicolumn{3}{c|}{$x_1+y$} & \multicolumn{3}{c|}{$x_2+y$} & \multicolumn{3}{c|}{$x_3+y$}\\
    \hline

    $Z$ & $y_1$ & $y_2$ & $y_3$ & $y_1$ & $y_2$ & $y_3$ & $y_1$ & $y_2$ & $y_3$ & $y_1$ & $y_2$ & $y_3$\\
    \hline

    $(e,w_1,w_2,w_3)$ & $-1$ & $-1$ & $-1$ & $1$ & $1$ & $1$ & $1$ & $1$ & $1$ & $1$ & $1$ & $1$  \\

    $(e,w_1,w_2+k_1,w_3+k_1)$ & $-1$ & $1$ & $1$ & $1$ & $-1$ & $-1$ & $1$ & $1$ & $1$ & $1$ & $1$ & $1$ \\

    $(e,w_1,w_2+k_2,w_3+k_2)$ & $1$ & $-1$ & $1$ & $-1$ & $1$ & $-1$ & $1$ & $1$ & $1$ & $1$ & $1$ & $1$ \\

    $(e,w_1,w_2+k_3,w_3+k_3)$ & $1$ & $1$ & $-1$ & $-1$ & $-1$ & $1$ & $1$ & $1$ & $1$ & $1$ & $1$ & $1$ \\

    $(e,w_1+k_1,w_2,w_3+k_1)$ & $-1$ & $1$ & $1$ & $1$ & $1$ & $1$ & $1$ & $-1$ & $-1$ & $1$ & $1$ & $1$ \\

    $(e,w_1+k_1,w_3,w_2+k_1)$ & $-1$ & $1$ & $1$ & $1$ & $1$ & $1$ & $1$ & $1$ & $1$ & $1$ & $-1$ & $-1$ \\

    $(e,w_1+k_1,w_2+k_2,w_3+k_3)$ & $1$ & $1$ & $1$ & $-1$ & $1$ & $1$ & $1$ & $-1$ & $1$ & $1$ & $1$ & $-1$ \\

    $(e,w_1+k_1,w_3+k_2,w_2+k_3)$ & $1$ & $1$ & $1$ & $-1$ & $1$ & $1$ & $1$ & $1$ & $-1$ & $1$ & $-1$ & $1$ \\

    $(e,w_2,w_1+k_2,w_3+k_2)$ & $1$ & $-1$ & $1$ & $1$ & $1$ & $1$ & $-1$ & $1$ & $-1$ & $1$ & $1$ & $1$ \\

    $(e,w_2,w_1+k_3,w_3+k_3)$ & $1$ & $1$ & $-1$ & $1$ & $1$ & $1$ & $-1$ & $-1$ & $1$ & $1$ & $1$ & $1$ \\

    $(e,w_3,w_1+k_2,w_2+k_2)$ & $1$ & $-1$ & $1$ & $1$ & $1$ & $1$ & $1$ & $1$ & $1$ & $-1$ & $1$ & $-1$ \\

    $(e,w_3,w_1+k_3,w_2+k_3)$ & $1$ & $1$ & $-1$ & $1$ & $1$ & $1$ & $1$ & $1$ & $1$ & $-1$ & $-1$ & $1$ \\

    $(e,w_2+k_1,w_1+k_2,w_3+k_3)$ & $1$ & $1$ & $1$ & $1$ & $-1$ & $1$ & $-1$ & $1$ & $1$ & $1$ & $1$ & $-1$ \\

    $(e,w_2+k_1,w_1+k_3,w_3+k_2)$ & $1$ & $1$ & $1$ & $1$ & $1$ & $-1$ & $-1$ & $1$ & $1$ & $1$ & $-1$ & $1$ \\

    $(e,w_3+k_1,w_1+k_2,w_2+k_3)$ & $1$ & $1$ & $1$ & $1$ & $-1$ & $1$ & $1$ & $1$ & $-1$ & $-1$ & $1$ & $1$ \\

    $(e,w_3+k_1,w_1+k_3,w_2+k_2)$ & $1$ & $1$ & $1$ & $1$ & $1$ & $-1$ & $1$ & $-1$ & $1$ & $-1$ & $1$ & $1$ \\
    \hline
\end{tabular}
\end{table}

\newpage

{\small
\begin{table}[hp!]
\caption{Type 1, one orbit}\label{type.one.two}
\begin{tabular}{|l | ccc | ccc | ccc | ccc|}

    \hline
    & \multicolumn{3}{c|}{$y$} & \multicolumn{3}{c|}{$x_1+y$} & \multicolumn{3}{c|}{$x_2+y$} & \multicolumn{3}{c|}{$x_3+y$}\\
    \hline

    $Z$ & $y_1$ & $y_2$ & $y_3$ & $y_1$ & $y_2$ & $y_3$ & $y_1$ & $y_2$ & $y_3$ & $y_1$ & $y_2$ & $y_3$\\
    \hline

    $(w_1,k_1,w_2,w_3+k_1)$ & $-1$ & $-1$ & $-1$ & $1$ & $-1$ & $-1$ & $1$ & $-1$ & $-1$ & $1$ & $1$ & $1$ \\

    $(w_1,k_1,w_3,w_2+k_1)$ & $-1$ & $-1$ & $-1$ & $1$ & $-1$ & $-1$ & $1$ & $1$ & $1$ & $1$ & $-1$ & $-1$ \\

    $(w_1,k_1,w_2+k_2,w_3+k_3)$ & $1$ & $-1$ & $-1$ & $-1$ & $-1$ & $-1$ & $1$ & $-1$ & $1$ & $1$ & $1$ & $-1$ \\

    $(w_1,k_1,w_3+k_2,w_2+k_3)$ & $1$ & $-1$ & $-1$ & $-1$ & $-1$ & $-1$ & $1$ & $1$ & $-1$ & $1$ & $-1$ & $1$ \\

    $(w_1,w_2,k_2,w_3+k_2)$ & $-1$ & $-1$ & $-1$ & $-1$ & $1$ & $-1$ & $-1$ & $1$ & $-1$ & $1$ & $1$ & $1$ \\

    $(w_1,w_2,k_3,w_3+k_3)$ & $-1$ & $-1$ & $-1$ & $-1$ & $-1$ & $1$ & $-1$ & $-1$ & $1$ & $1$ & $1$ & $1$ \\

    $(w_1,w_3,k_2,w_2+k_2)$ & $-1$ & $-1$ & $-1$ & $-1$ & $1$ & $-1$ & $1$ & $1$ & $1$ & $-1$ & $1$ & $-1$ \\

    $(w_1,w_3,k_3,w_2+k_3)$ & $-1$ & $-1$ & $-1$ & $-1$ & $-1$ & $1$ & $1$ & $1$ & $1$ & $-1$ & $-1$ & $1$ \\

    $(w_1,w_2+k_1,k_2,w_3+k_3)$ & $-1$ & $1$ & $-1$ & $-1$ & $-1$ & $-1$ & $-1$ & $1$ & $1$ & $1$ & $1$ & $-1$ \\

    $(w_1,w_2+k_1,k_3,w_3+k_2)$ & $-1$ & $-1$ & $1$ & $-1$ & $-1$ & $-1$ & $-1$ & $1$ & $1$ & $1$ & $-1$ & $1$ \\

    $(w_1,w_3+k_1,k_2,w_2+k_3)$ & $-1$ & $1$ & $-1$ & $-1$ & $-1$ & $-1$ & $1$ & $1$ & $-1$ & $-1$ & $1$ & $1$ \\

    $(w_1,w_3+k_1,k_3,w_2+k_2)$ & $-1$ & $-1$ & $1$ & $-1$ & $-1$ & $-1$ & $1$ & $-1$ & $1$ & $-1$ & $1$ & $1$ \\

    $(k_1,w_1+k_1,w_2,w_3)$ & $-1$ & $-1$ & $-1$ & $1$ & $1$ & $1$ & $1$ & $-1$ & $-1$ & $1$ & $-1$ & $-1$ \\

    $(k_1,w_1+k_1,w_2+k_1,w_3+k_1)$ & $-1$ & $1$ & $1$ & $1$ & $-1$ & $-1$ & $1$ & $-1$ & $-1$ & $1$ & $-1$ & $-1$ \\

    $(k_1,w_1+k_1,w_2+k_2,w_3+k_2)$ & $1$ & $-1$ & $1$ & $-1$ & $1$ & $-1$ & $1$ & $-1$ & $-1$ & $1$ & $-1$ & $-1$ \\

    $(k_1,w_1+k_1,w_2+k_3,w_3+k_3)$ & $1$ & $1$ & $-1$ & $-1$ & $-1$ & $1$ & $1$ & $-1$ & $-1$ & $1$ & $-1$ & $-1$ \\

    $(k_1,w_2,w_1+k_2,w_3+k_3)$ & $1$ & $-1$ & $-1$ & $1$ & $-1$ & $1$ & $-1$ & $-1$ & $-1$ & $1$ & $1$ & $-1$ \\

    $(k_1,w_2,w_1+k_3,w_3+k_2)$ & $1$ & $-1$ & $-1$ & $1$ & $1$ & $-1$ & $-1$ & $-1$ & $-1$ & $1$ & $-1$ & $1$ \\

    $(k_1,w_3,w_1+k_2,w_2+k_3)$ & $1$ & $-1$ & $-1$ & $1$ & $-1$ & $1$ & $1$ & $1$ & $-1$ & $-1$ & $-1$ & $-1$ \\

    $(k_1,w_3,w_1+k_3,w_2+k_2)$ & $1$ & $-1$ & $-1$ & $1$ & $1$ & $-1$ & $1$ & $-1$ & $1$ & $-1$ & $-1$ & $-1$ \\

    $(k_1,w_2+k_1,w_1+k_2,w_3+k_2)$ & $1$ & $-1$ & $1$ & $1$ & $-1$ & $-1$ & $-1$ & $1$ & $-1$ & $1$ & $-1$ & $-1$ \\

    $(k_1,w_2+k_1,w_1+k_3,w_3+k_3)$ & $1$ & $1$ & $-1$ & $1$ & $-1$ & $-1$ & $-1$ & $-1$ & $1$ & $1$ & $-1$ & $-1$ \\

    $(k_1,w_3+k_1,w_1+k_2,w_2+k_2)$ & $1$ & $-1$ & $1$ & $1$ & $-1$ & $-1$ & $1$ & $-1$ & $-1$ & $-1$ & $1$ & $-1$ \\

    $(k_1,w_3+k_1,w_1+k_3,w_2+k_3)$ & $1$ & $1$ & $-1$ & $1$ & $-1$ & $-1$ & $1$ & $-1$ & $-1$ & $-1$ & $-1$ & $1$ \\

    $(w_1+k_1,w_2,k_2,w_3+k_3)$ & $-1$ & $1$ & $-1$ & $-1$ & $1$ & $1$ & $-1$ & $-1$ & $-1$ & $1$ & $1$ & $-1$ \\

    $(w_1+k_1,w_2,k_3,w_3+k_2)$ & $-1$ & $-1$ & $1$ & $-1$ & $1$ & $1$ & $-1$ & $-1$ & $-1$ & $1$ & $-1$ & $1$ \\

    $(w_1+k_1,w_3,k_2,w_2+k_3)$ & $-1$ & $1$ & $-1$ & $-1$ & $1$ & $1$ & $1$ & $1$ & $-1$ & $-1$ & $-1$ & $-1$ \\

    $(w_1+k_1,w_3,k_3,w_2+k_2)$ & $-1$ & $-1$ & $1$ & $-1$ & $1$ & $1$ & $1$ & $-1$ & $1$ & $-1$ & $-1$ & $-1$ \\

    $(w_1+k_1,w_2+k_1,k_2,w_3+k_2)$ & $-1$ & $1$ & $1$ & $-1$ & $1$ & $-1$ & $-1$ & $1$ & $-1$ & $1$ & $-1$ & $-1$ \\

    $(w_1+k_1,w_2+k_1,k_3,w_3+k_3)$ & $-1$ & $1$ & $1$ & $-1$ & $-1$ & $1$ & $-1$ & $-1$ & $1$ & $1$ & $-1$ & $-1$ \\

    $(w_1+k_1,w_3+k_1,k_2,w_2+k_2)$ & $-1$ & $1$ & $1$ & $-1$ & $1$ & $-1$ & $1$ & $-1$ & $-1$ & $-1$ & $1$ & $-1$ \\

    $(w_1+k_1,w_3+k_1,k_3,w_2+k_3)$ & $-1$ & $1$ & $1$ & $-1$ & $-1$ & $1$ & $1$ & $-1$ & $-1$ & $-1$ & $-1$ & $1$ \\

    $(w_2,w_3,k_2,w_1+k_2)$ & $-1$ & $-1$ & $-1$ & $1$ & $1$ & $1$ & $-1$ & $1$ & $-1$ & $-1$ & $1$ & $-1$ \\

    $(w_2,w_3,k_3,w_1+k_3)$ & $-1$ & $-1$ & $-1$ & $1$ & $1$ & $1$ & $-1$ & $-1$ & $1$ & $-1$ & $-1$ & $1$ \\

    $(w_2,w_3+k_1,k_2,w_1+k_3)$ & $-1$ & $1$ & $-1$ & $1$ & $1$ & $-1$ & $-1$ & $-1$ & $-1$ & $-1$ & $1$ & $1$ \\

    $(w_2,w_3+k_1,w_1+k_2,k_3)$ & $-1$ & $-1$ & $1$ & $1$ & $-1$ & $1$ & $-1$ & $-1$ & $-1$ & $-1$ & $1$ & $1$ \\

    $(w_3,w_2+k_1,k_2,w_1+k_3)$ & $-1$ & $1$ & $-1$ & $1$ & $1$ & $-1$ & $-1$ & $1$ & $1$ & $-1$ & $-1$ & $-1$ \\

    $(w_3,w_2+k_1,w_1+k_2,k_3)$ & $-1$ & $-1$ & $1$ & $1$ & $-1$ & $1$ & $-1$ & $1$ & $1$ & $-1$ & $-1$ & $-1$ \\

    $(w_2+k_1,w_3+k_1,k_2,w_1+k_2)$ & $-1$ & $1$ & $1$ & $1$ & $-1$ & $-1$ & $-1$ & $1$ & $-1$ & $-1$ & $1$ & $-1$ \\

    $(w_2+k_1,w_3+k_1,k_3,w_1+k_3)$ & $-1$ & $1$ & $1$ & $1$ & $-1$ & $-1$ & $-1$ & $-1$ & $1$ & $-1$ & $-1$ & $1$ \\

    $(k_2,w_1+k_2,w_2+k_2,w_3+k_2)$ & $1$ & $-1$ & $1$ & $-1$ & $1$ & $-1$ & $-1$ & $1$ & $-1$ & $-1$ & $1$ & $-1$ \\

    $(k_2,w_1+k_2,w_2+k_3,w_3+k_3)$ & $1$ & $1$ & $-1$ & $-1$ & $-1$ & $1$ & $-1$ & $1$ & $-1$ & $-1$ & $1$ & $-1$ \\

    $(k_2,w_1+k_3,w_2+k_2,w_3+k_3)$ & $1$ & $1$ & $-1$ & $-1$ & $1$ & $-1$ & $-1$ & $-1$ & $1$ & $-1$ & $1$ & $-1$ \\

    $(k_2,w_1+k_3,w_3+k_2,w_2+k_3)$ & $1$ & $1$ & $-1$ & $-1$ & $1$ & $-1$ & $-1$ & $1$ & $-1$ & $-1$ & $-1$ & $1$ \\

    $(w_1+k_2,k_3,w_2+k_2,w_3+k_3)$ & $1$ & $-1$ & $1$ & $-1$ & $-1$ & $1$ & $-1$ & $-1$ & $1$ & $-1$ & $1$ & $-1$ \\

    $(w_1+k_2,k_3,w_3+k_2,w_2+k_3)$ & $1$ & $-1$ & $1$ & $-1$ & $-1$ & $1$ & $-1$ & $1$ & $-1$ & $-1$ & $-1$ & $1$ \\

    $(k_3,w_1+k_3,w_2+k_2,w_3+k_2)$ & $1$ & $-1$ & $1$ & $-1$ & $1$ & $-1$ & $-1$ & $-1$ & $1$ & $-1$ & $-1$ & $1$ \\

    $(k_3,w_1+k_3,w_2+k_3,w_3+k_3)$ & $1$ & $1$ & $-1$ & $-1$ & $-1$ & $1$ & $-1$ & $-1$ & $1$ & $-1$ & $-1$ & $1$ \\
    \hline
\end{tabular}
\end{table}
}

\begin{table}
\caption{Type 2, two orbits}\label{type.two.one}
\begin{tabular}{| l | cccc || cc | ll |}

    \hline
    $Z$ & $e$ & $x_1$ & $x_2$ & $x_3$ & $n_1$ & $n_2$ & $n_1$ & $n_2$\\
    \hline

    $(e,w_1,k_1,w_1+k_1)$ & $1$ & $-1$ & $1$ & $1$ & $(1,1)$ & $(1,1)$ & $x_2+y_1$, & $x_3+y_1$\\

    $(e,w_1,k_2,w_1+k_2)$ & $1$ & $-1$ & $1$ & $1$ & $(1,1)$ & $(1,1)$ & $x_2+y_2$, & $x_3+y_2$\\

    $(e,w_1,k_3,w_1+k_3)$ & $1$ & $-1$ & $1$ & $1$ & $(1,1)$ & $(1,1)$ & $x_2+y_3$, & $x_3+y_3$\\

    $(e,k_1,w_2,w_2+k_1)$ & $1$ & $1$ & $-1$ & $1$ & $(1,1)$ & $(1,1)$ & $x_1+y_1$, & $x_3+y_1$\\

    $(e,k_1,w_3,w_3+k_1)$ & $1$ & $1$ & $1$ & $-1$ & $(1,1)$ & $(1,1)$ & $x_1+y_1$, & $x_2+y_1$\\

    $(e,k_1,w_1+k_2,w_1+k_3)$ & $1$ & $-1$ & $1$ & $1$ & $(1,1)$ & $(1,1)$ & $y_1$, & $x_1+y_1$\\

    $(e,k_1,w_2+k_2,w_2+k_3)$ & $1$ & $1$ & $-1$ & $1$ & $(1,1)$ & $(1,1)$ & $y_1$, & $x_2+y_1$\\

    $(e,k_1,w_3+k_2,w_3+k_3)$ & $1$ & $1$ & $1$ & $-1$ & $(1,1)$ & $(1,1)$ & $y_1$, & $x_3+y_1$\\

    $(e,w_1+k_1,k_2,w_1+k_3)$ & $1$ & $-1$ & $1$ & $1$ & $(1,1)$ & $(1,1)$ & $y_2$, & $x_1+y_2$\\

    $(e,w_1+k_1,w_1+k_2,k_3)$ & $1$ & $-1$ & $1$ & $1$ & $(1,1)$ & $(1,1)$ & $y_3$, & $x_1+y_3$\\

    $(e,w_2,k_2,w_2+k_2)$ & $1$ & $1$ & $-1$ & $1$ & $(1,1)$ & $(1,1)$ & $x_1+y_2$, & $x_3+y_2$\\

    $(e,w_2,k_3,w_2+k_3)$ & $1$ & $1$ & $-1$ & $1$ & $(1,1)$ & $(1,1)$ & $x_1+y_3$, & $x_3+y_3$\\

    $(e,w_3,k_2,w_3+k_2)$ & $1$ & $1$ & $1$ & $-1$ & $(1,1)$ & $(1,1)$ & $x_1+y_2$, & $x_2+y_2$\\

    $(e,w_3,k_3,w_3+k_3)$ & $1$ & $1$ & $1$ & $-1$ & $(1,1)$ & $(1,1)$ & $x_1+y_3$, & $x_2+y_3$\\

    $(e,w_2+k_1,k_2,w_2+k_3)$ & $1$ & $1$ & $-1$ & $1$ & $(1,1)$ & $(1,1)$ & $y_2$, & $x_2+y_2$\\

    $(e,w_2+k_1,k_3,w_2+k_2)$ & $1$ & $1$ & $-1$ & $1$ & $(1,1)$ & $(1,1)$ & $y_3$, & $x_2+y_3$\\

    $(e,w_3+k_1,k_2,w_3+k_3)$ & $1$ & $1$ & $1$ & $-1$ & $(1,1)$ & $(1,1)$ & $y_2$, & $x_3+y_2$\\

    $(e,w_3+k_1,k_3,w_3+k_2)$ & $1$ & $1$ & $1$ & $-1$ & $(1,1)$ & $(1,1)$ & $y_3$, & $x_3+y_3$\\
\hline

\end{tabular}

\bigskip

\bigskip 

\bigskip 

\begin{tabular}{| l | cccc || cc | ll |}

    \hline
    $Z$ & $e$ & $x_1$ & $x_2$ & $x_3$ & $n_1$ & $n_2$ & $n_1$ & $n_2$\\
    \hline

    $(w_1,k_1,k_2,w_1+k_3)$ & $1$ & $-1$ & $1$ & $1$ & $(-1,-1)$ & $(-1,-1)$ & $y_3$, & $x_1+y_3$\\

    $(w_1,k_1,w_1+k_2,k_3)$ & $1$ & $-1$ & $1$ & $1$ & $(-1,-1)$ & $(-1,-1)$ & $y_2$, & $x_1+y_2$\\

    $(w_1,w_1+k_1,k_2,k_3)$ & $1$ & $-1$ & $1$ & $1$ & $(-1,-1)$ & $(-1,-1)$ & $y_1$, & $x_1+y_1$\\

    $(k_1,w_1+k_1,k_2,w_1+k_2)$ & $1$ & $-1$ & $1$ & $1$ & $(-1,-1)$ & $(-1,-1)$ & $x_2+y_3$, & $x_3+y_3$\\

    $(k_1,w_1+k_1,k_3,w_1+k_3)$ & $1$ & $-1$ & $1$ & $1$ & $(-1,-1)$ & $(-1,-1)$ & $x_2+y_2$, & $x_3+y_2$\\

    $(k_1,w_2,k_2,w_2+k_3)$ & $1$ & $1$ & $-1$ & $1$ & $(-1,-1)$ & $(-1,-1)$ & $y_3$, & $x_2+y_3$\\

    $(k_1,w_2,k_3,w_2+k_2)$ & $1$ & $1$ & $-1$ & $1$ & $(-1,-1)$ & $(-1,-1)$ & $y_2$, & $x_2+y_2$\\

    $(k_1,w_3,k_2,w_3+k_3)$ & $1$ & $1$ & $1$ & $-1$ & $(-1,-1)$ & $(-1,-1)$ & $y_3$, & $x_3+y_3$\\

    $(k_1,w_3,k_3,w_3+k_2)$ & $1$ & $1$ & $1$ & $-1$ & $(-1,-1)$ & $(-1,-1)$ & $y_2$, & $x_3+y_2$\\

    $(k_1,w_2+k_1,k_2,w_2+k_2)$ & $1$ & $1$ & $-1$ & $1$ & $(-1,-1)$ & $(-1,-1)$ & $x_1+y_3$, & $x_3+y_3$\\

    $(k_1,w_2+k_1,k_3,w_2+k_3)$ & $1$ & $1$ & $-1$ & $1$ & $(-1,-1)$ & $(-1,-1)$ & $x_1+y_2$, & $x_3+y_2$\\

    $(k_1,w_3+k_1,k_2,w_3+k_2)$ & $1$ & $1$ & $1$ & $-1$ & $(-1,-1)$ & $(-1,-1)$ & $x_1+y_3$, & $x_2+y_3$\\

    $(k_1,w_3+k_1,k_3,w_3+k_3)$ & $1$ & $1$ & $1$ & $-1$ & $(-1,-1)$ & $(-1,-1)$ & $x_1+y_2$, & $x_2+y_2$\\

    $(w_2,w_2+k_1,k_2,k_3)$ & $1$ & $1$ & $-1$ & $1$ & $(-1,-1)$ & $(-1,-1)$ & $y_1$, & $x_2+y_1$\\

    $(w_3,w_3+k_1,k_2,k_3)$ & $1$ & $1$ & $1$ & $-1$ & $(-1,-1)$ & $(-1,-1)$ & $y_1$, & $x_3+y_1$\\

    $(k_2,w_1+k_2,k_3,w_1+k_3)$ & $1$ & $-1$ & $1$ & $1$ & $(-1,-1)$ & $(-1,-1)$ & $x_2+y_1$, & $x_3+y_1$\\

    $(k_2,k_3,w_2+k_2,w_2+k_3)$ & $1$ & $1$ & $-1$ & $1$ & $(-1,-1)$ & $(-1,-1)$ & $x_1+y_1$, & $x_3+y_1$\\

    $(k_2,k_3,w_3+k_2,w_3+k_3)$ & $1$ & $1$ & $1$ & $-1$ & $(-1,-1)$ & $(-1,-1)$ & $x_1+y_1$, & $x_2+y_1$\\
\hline

\end{tabular}
\end{table}

\begin{table}[hp!]
\caption{Type 2, one orbit}
\label{type.two.two}

\begin{tabular}{| l | cccc || cc | ll |}

    \hline
    $Z$ & $e$ & $x_1$ & $x_2$ & $x_3$ & $n_1$ & $n_2$ & $n_1$ & $n_2$\\
    \hline

    $(w_1,w_1+k_1,w_2,w_2+k_1)$ & $-1$ & $-1$ & $-1$ & $1$ & $(-1,-1)$ & $(1,1)$ & $y_1$, & $x_3+y_1$\\

    $(w_1,w_1+k_1,w_3,w_3+k_1)$ & $-1$ & $-1$ & $1$ & $-1$ & $(-1,-1)$ & $(1,1)$ & $y_1$, & $x_2+y_1$\\

    $(w_1,w_1+k_1,w_2+k_2,w_2+k_3)$ & $-1$ & $-1$ & $-1$ & $1$ & $(-1,-1)$ & $(1,1)$ & $x_1+y_1$, & $x_2+y_1$\\

    $(w_1,w_1+k_1,w_3+k_2,w_3+k_3)$ & $-1$ & $-1$ & $1$ & $-1$ & $(-1,-1)$ & $(1,1)$ & $x_1+y_1$, & $x_3+y_1$\\

    $(w_1,w_2,w_1+k_2,w_2+k_2)$ & $-1$ & $-1$ & $-1$ & $1$ & $(-1,-1)$ & $(1,1)$ & $y_2$, & $x_3+y_2$\\

    $(w_1,w_2,w_1+k_3,w_2+k_3)$ & $-1$ & $-1$ & $-1$ & $1$ & $(-1,-1)$ & $(1,1)$ & $y_3$, & $x_3+y_3$\\

    $(w_1,w_3,w_1+k_2,w_3+k_2)$ & $-1$ & $-1$ & $1$ & $-1$ & $(-1,-1)$ & $(1,1)$ & $y_2$, & $x_2+y_2$\\

    $(w_1,w_3,w_1+k_3,w_3+k_3)$ & $-1$ & $-1$ & $1$ & $-1$ & $(-1,-1)$ & $(1,1)$ & $y_3$, & $x_2+y_3$\\

    $(w_1,w_2+k_1,w_1+k_2,w_2+k_3)$ & $-1$ & $-1$ & $-1$ & $1$ & $(-1,-1)$ & $(1,1)$ & $x_1+y_2$, & $x_2+y_2$\\

    $(w_1,w_2+k_1,w_1+k_3,w_2+k_2)$ & $-1$ & $-1$ & $-1$ & $1$ & $(-1,-1)$ & $(1,1)$ & $x_1+y_3$, & $x_2+y_3$\\

    $(w_1,w_3+k_1,w_1+k_2,w_3+k_3)$ & $-1$ & $-1$ & $1$ & $-1$ & $(-1,-1)$ & $(1,1)$ & $x_1+y_2$, & $x_3+y_2$\\

    $(w_1,w_3+k_1,w_1+k_3,w_3+k_2)$ & $-1$ & $-1$ & $1$ & $-1$ & $(-1,-1)$ & $(1,1)$ & $x_1+y_3$, & $x_3+y_3$\\

    $(w_1+k_1,w_2,w_1+k_2,w_2+k_3)$ & $-1$ & $-1$ & $-1$ & $1$ & $(1,1)$ & $(-1,-1)$ & $x_1+y_3$, & $x_2+y_3$\\

    $(w_1+k_1,w_2,w_1+k_3,w_2+k_2)$ & $-1$ & $-1$ & $-1$ & $1$ & $(1,1)$ & $(-1,-1)$ & $x_1+y_2$, & $x_2+y_2$\\

    $(w_1+k_1,w_3,w_1+k_2,w_3+k_3)$ & $-1$ & $-1$ & $1$ & $-1$ & $(1,1)$ & $(-1,-1)$ & $x_1+y_3$, & $x_3+y_3$\\

    $(w_1+k_1,w_3,w_1+k_3,w_3+k_2)$ & $-1$ & $-1$ & $1$ & $-1$ & $(1,1)$ & $(-1,-1)$ & $x_1+y_2$, & $x_3+y_2$\\

    $(w_1+k_1,w_2+k_1,w_1+k_2,w_2+k_2)$ & $-1$ & $-1$ & $-1$ & $1$ & $(1,1)$ & $(-1,-1)$ & $y_3$, & $x_3+y_3$\\

    $(w_1+k_1,w_2+k_1,w_1+k_3,w_2+k_3)$ & $-1$ & $-1$ & $-1$ & $1$ & $(1,1)$ & $(-1,-1)$ & $y_2$, & $x_3+y_2$\\

    $(w_1+k_1,w_3+k_1,w_1+k_2,w_3+k_2)$ & $-1$ & $-1$ & $1$ & $-1$ & $(1,1)$ & $(-1,-1)$ & $y_3$, & $x_2+y_3$\\

    $(w_1+k_1,w_3+k_1,w_1+k_3,w_3+k_3)$ & $-1$ & $-1$ & $1$ & $-1$ & $(1,1)$ & $(-1,-1)$ & $y_2$, & $x_2+y_2$\\

    $(w_2,w_3,w_2+k_1,w_3+k_1)$ & $-1$ & $1$ & $-1$ & $-1$ & $(-1,-1)$ & $(1,1)$ & $y_1$, & $x_1+y_1$\\

    $(w_2,w_3,w_2+k_2,w_3+k_2)$ & $-1$ & $1$ & $-1$ & $-1$ & $(-1,-1)$ & $(1,1)$ & $y_2$, & $x_1+y_2$\\

    $(w_2,w_3,w_2+k_3,w_3+k_3)$ & $-1$ & $1$ & $-1$ & $-1$ & $(-1,-1)$ & $(1,1)$ & $y_3$, & $x_1+y_3$\\

    $(w_2,w_2+k_1,w_1+k_2,w_1+k_3)$ & $-1$ & $-1$ & $-1$ & $1$ & $(1,1)$ & $(-1,-1)$ & $x_1+y_1$, & $x_2+y_1$\\

    $(w_2,w_2+k_1,w_3+k_2,w_3+k_3)$ & $-1$ & $1$ & $-1$ & $-1$ & $(-1,-1)$ & $(1,1)$ & $x_2+y_1$, & $x_3+y_1$\\

    $(w_2,w_3+k_1,w_2+k_2,w_3+k_3)$ & $-1$ & $1$ & $-1$ & $-1$ & $(-1,-1)$ & $(1,1)$ & $x_2+y_2$, & $x_3+y_2$\\

    $(w_2,w_3+k_1,w_3+k_2,w_2+k_3)$ & $-1$ & $1$ & $-1$ & $-1$ & $(-1,-1)$ & $(1,1)$ & $x_2+y_3$, & $x_3+y_3$\\

    $(w_3,w_2+k_1,w_2+k_2,w_3+k_3)$ & $-1$ & $1$ & $-1$ & $-1$ & $(1,1)$ & $(-1,-1)$ & $x_2+y_3$, & $x_3+y_3$\\

    $(w_3,w_2+k_1,w_3+k_2,w_2+k_3)$ & $-1$ & $1$ & $-1$ & $-1$ & $(1,1)$ & $(-1,-1)$ & $x_2+y_2$, & $x_3+y_2$\\

    $(w_3,w_3+k_1,w_1+k_2,w_1+k_3)$ & $-1$ & $-1$ & $1$ & $-1$ & $(1,1)$ & $(-1,-1)$ & $x_1+y_1$, & $x_3+y_1$\\

    $(w_3,w_3+k_1,w_2+k_2,w_2+k_3)$ & $-1$ & $1$ & $-1$ & $-1$ & $(1,1)$ & $(-1,-1)$ & $x_2+y_1$, & $x_3+y_1$\\

    $(w_2+k_1,w_3+k_1,w_2+k_2,w_3+k_2)$ & $-1$ & $1$ & $-1$ & $-1$ & $(1,1)$ & $(-1,-1)$ & $y_3$, & $x_1+y_3$\\

    $(w_2+k_1,w_3+k_1,w_2+k_3,w_3+k_3)$ & $-1$ & $1$ & $-1$ & $-1$ & $(1,1)$ & $(-1,-1)$ & $y_2$, & $x_1+y_2$\\

    $(w_1+k_2,w_1+k_3,w_2+k_2,w_2+k_3)$ & $-1$ & $-1$ & $-1$ & $1$ & $(1,1)$ & $(-1,-1)$ & $y_1$, & $x_3+y_1$\\

    $(w_1+k_2,w_1+k_3,w_3+k_2,w_3+k_3)$ & $-1$ & $-1$ & $1$ & $-1$ & $(1,1)$ & $(-1,-1)$ & $y_1$, & $x_2+y_1$\\

    $(w_2+k_2,w_3+k_2,w_2+k_3,w_3+k_3)$ & $-1$ & $1$ & $-1$ & $-1$ & $(1,1)$ & $(-1,-1)$ & $y_1$, & $x_1+y_1$\\
\hline
\end{tabular}
\end{table}

\clearpage
\newpage

\begin{table}
\caption{Type 3, two orbits}\label{type.three}
\begin{tabular}{| l | cccc |}
    \hline
    $Z$ & $e$ & $x_1$ & $x_2$ & $x_3$\\
    \hline
    $(e,k_1,k_2,k_3)$ & $(1,1,1)$ & $1$ & $1$ & $1$\\
    \hline
\end{tabular}

\bigskip

\bigskip 

\bigskip

\begin{tabular}{| l | cccc |}

    \hline
    $Z$ & $e$ & $x_1$ & $x_2$ & $x_3$\\
    \hline
    $(w_1,w_1+k_1,w_1+k_2,w_1+k_3)$ & $-1$ & $(-1,-1,-1)$ & $1$ & $1$\\

    $(w_2,w_2+k_1,w_2+k_2,w_2+k_3)$ & $-1$ & $1$ & $(-1,-1,-1)$ & $1$\\   

    $(w_3,w_3+k_1,w_3+k_2,w_3+k_3)$ & $-1$ & $1$ & $1$ & $(-1,-1,-1)$\\   
    \hline

\end{tabular}
\end{table}

\bibliographystyle{alpha}
\bibliography{mainbib}

\end{document}